\documentclass[11pt]{amsart}

\usepackage[margin=1.05in]{geometry}
\usepackage{amsmath,amssymb,amsthm,mathtools}
\usepackage{microtype}
\usepackage[hidelinks]{hyperref}

\numberwithin{equation}{section}

\newtheorem{theorem}{Theorem}[section]
\newtheorem{proposition}[theorem]{Proposition}
\newtheorem{lemma}[theorem]{Lemma}
\newtheorem{remark}[theorem]{Remark}
\newtheorem{corollary}[theorem]{Corollary}

\usepackage{xcolor}

\newcommand{\R}{\mathbb R}
\newcommand{\Z}{\mathbb Z}
\newcommand{\Q}{\mathbb Q}
\newcommand{\N}{\mathbb N}
\newcommand{\T}{\mathbb T}
\newcommand{\SL}{\mathrm{SL}}
\newcommand{\SO}{\mathrm{SO}}
\newcommand{\sys}{\operatorname{sys}}
\newcommand{\tr}{\operatorname{tr}}

\newcommand{\cl}{\overline}

\title{Existence of a Hall ray in higher dimensional Lagrange spectra}
\author{Yitwah Cheung}
\address[Cheung]{Yau Mathematical Sciences Center, Tsinghua University, Beijing, China}
\email{yitwah@tsinghua.edu.cn}

\author{Zhijing Wendy Wang}
\address[Wang]{Department of Mathematics, University of Chicago, Chicago, Illinois 60637, United States of America}
\email{zhijingw@uchicago.edu}

\author{Ruichong Zhang}
\address[Zhang]{Qiuzhen College, Tsinghua University, Beijing, China}
\email{zhangrc24@mails.tsinghua.edu.cn}

\date{September 18, 2026}

\begin{document}
\maketitle

\begin{abstract}
Given any norm $\|\cdot\|_{\R^2}$ on $\R^2$, we prove that the Lagrange spectrum for $2$-vectors contains a Hall ray. Equivalently, every sufficiently small $\rho\ge0$ occurs as
\[
\liminf_{q\to\infty}q^{1/2}\min_{p\in\Z^2}\|qx-p\|_{\R^2}
\]
for some $x\in\R^2\setminus\Q^2$. Combined with a recent result of Kleinbock \cite{Kleinbock}, this implies that the $2$-dimensional Lagrange spectrum is either a ray, mirroring the behavior for Dirichlet spectra, or a ray preceded by a dense part.  
\end{abstract}

\setcounter{tocdepth}{1}

\section{Introduction}
For $\alpha\in \R$, write
\[
 \|\alpha\|=\min_{p\in\Z}|\alpha-p|.
\]
The classical Lagrange spectrum is
\[
 \mathcal L_1
 =
 \{\big(\liminf_{q\to\infty}q\,\|q\alpha\|\big)^{-1}:
   \alpha\in\R\}.
\]
This 1-dimensional Lagrange spectrum $\mathcal L_1$ has several sharply different regimes. 
At the lower end, Markov~\cite{Markov} proved that
\[
 \mathcal L_1\cap(-\infty,3)
 =
 \left\{
   \sqrt{9-\frac{4}{m^2}}:
   m\text{ is a Markov number}
 \right\},
\]
which is a discrete sequence beginning with
$
 \sqrt5,
 2\sqrt2,
 \frac{\sqrt{221}}5,\ldots $
and accumulating at $3$ \cite[pp.~145--147]{Moreira}. 
At the opposite end, Hall showed that the spectrum eventually contains no gaps.

\begin{theorem}[Hall~\cite{Hall}]
There is a constant $L_0$ such that every $L\geq L_0$ belongs to the classical one-dimensional Lagrange spectrum.
\end{theorem}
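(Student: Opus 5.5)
The plan is to follow Hall's original strategy: use continued fractions and a Cantor-set sum theorem. For $\alpha=[a_0;a_1,a_2,\dots]$, the standard formula gives
\[
\Big(\liminf_{q\to\infty} q\|q\alpha\|\Big)^{-1}=\limsup_{n\to\infty}\Big([a_n;a_{n+1},a_{n+2},\dots]+[0;a_{n-1},a_{n-2},\dots,a_1]\Big).
\]
So it suffices to realize every large $L$ as such a limsup. Let $F(4)$ be the Cantor set of $x\in[0,1]$ whose partial quotients are all at most $4$.

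\textbf{Step 1: Hall's sum theorem.} I will show that $F(4)+F(4)$ contains an interval of length greater than $1$; Hall obtains $[\sqrt2-1,\,4\sqrt2-4]$. This uses the criterion that $C_1+C_2$ is an interval whenever $C_1,C_2$ are Cantor sets built by removing open gaps and each gap is no longer than the adjacent remaining bridge of the other set. The criterion is proved by an induction on the construction stages: at every stage the union of sums of bridge intervals has no holes. For $F(4)$ the bridge and gap lengths at each stage are expressed through continuants, so the required comparison reduces to uniform inequalities among ratios of continuants. Verifying these inequalities, together with the bookkeeping of which gap sits next to which bridge, is the main obstacle. I would first check the worst case, digit transitions $1\leftrightarrow 4$, by hand, and then argue monotonicity in the tail.

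\textbf{Step 2: spectrum construction.} Because the interval from Step 1 has length greater than $1$, every large $L$ can be written as $L=a+x+y$ with $a\in\Z$ large and $x,y\in F(4)$. Write $x=[0;b_1,b_2,\dots]$ and $y=[0;c_1,c_2,\dots]$ with all $b_i,c_i\le4$. I then build $\alpha$ from blocks $c_k,\dots,c_1,a,b_1,\dots,b_k$, concatenated with $k\to\infty$. At the positions holding $a$, the quantity in the formula tends to $a+x+y=L$. At every other position the partial quotient is at most $4$, so the quantity there is at most about $4+1+1<L$ once $a$ is large. Hence the limsup equals $L$, which gives every $L\ge L_0$ for a suitable constant $L_0$.

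One subtlety: the convergence at the $a$-positions needs the blocks to grow, so that the digits adjacent to $a$ agree with $x$ and $y$ to increasing depth. The longer blocks do exactly this. Endpoint and boundary issues disappear because the limsup only involves the tails.
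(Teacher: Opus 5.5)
The paper gives no proof of this theorem. It is quoted from Hall, and the introduction only alludes, via Perron's formula and sums of continued-fraction Cantor sets, to the classical argument. Your proposal is exactly that classical argument, so it is correct and follows the intended route.

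Step~2 is complete as written: every non-$a$ position contributes less than $6$, so $L_0=6$ works. Step~1 is Hall's identity $F(4)+F(4)=[\sqrt2-1,\,4\sqrt2-4]$, which you rightly single out as the real content. A clean way to finish it is to verify that $F(4)$ has Newhouse thickness at least $1$ (the tightest ratio, roughly $1.3$, pits the bridge $a_n=4$ against the gap between $a_n=3$ and $a_n=4$). Then apply the gap lemma to $F(4)$ and $t-F(4)$.
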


The half-line $[L_0,\infty)$ is called a \emph{Hall ray}. Freiman~\cite{Freiman} later determined the left endpoint
\[
 c_F\approx4.527829566
\]
of the largest such half-line \cite[pp.~146--148]{Moreira}. 
Between these two regions,
the spectrum has a rich fractal structure; see~\cite{Moreira,ErazoEtAl}.

In this paper, we study the corresponding spectrum for simultaneous approximation. Fix a norm $\|\cdot\|_{\R^d}$ on $\R^d$ and
write
\[
 \|x\|_{\T^d}=\min_{p\in\Z^d}\|x-p\|_{\R^d}.
\]
Define the $d$-\textbf{dimensional Lagrange spectrum} to be
\[
 \mathcal L_d := \{\big(\liminf_{q\to \infty} q^{1/d}\|q\theta\|_{\T^d}\big)^{-1} :\theta\in\R^d\}.
\]

In this paper, we show that in dimension $d=2$, the Lagrange spectrum $\mathcal{L}_2$ contains a ray. 

\begin{theorem}\label{thm:main} 
Fix any norm\footnote{Here, no smoothness or strict convexity is required of $\|\cdot\|_{\R^2}$.} $\|\cdot\|_{\R^2}$ on $\R^2$.
 Then there exists $\rho_0>0$ such that \[(\rho_0^{-1},\infty)\subset \mathcal L_2.\] In other words, for every $\rho\in(0,\rho_0)$, there is $x\in\R^2\setminus\Q^2$ satisfying
\[
\liminf_{q\to\infty}q^{1/2}\|qx\|_{\T^2}=\rho.
\]
\end{theorem}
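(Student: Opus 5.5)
We will translate the statement into homogeneous dynamics and construct the required lattice trajectory by hand. By Dani's correspondence, $x\in\R^2$ determines the unimodular lattice $\Lambda_x=u_x\Z^3\subset\R^3$, where $u_x(q,p)=(q,\,qx-p)$. Let $a_t=\mathrm{diag}(e^{-2t},e^{t},e^{t})$ act on $\R\times\R^2$, and for $v=(v_0,v')$ put
\[
N(v)=|v_0|^{1/2}\|v'\|_{\R^2}.
\]
Then $N$ is $a_t$-invariant, and
\[
\liminf_{q\to\infty}q^{1/2}\|qx\|_{\T^2}=\liminf_{t\to\infty}\ \min\{N(v): v\in a_tu_x\Z^3,\ v_0\neq0,\ |v_0|\le 1 \text{ (say)},\ \|v'\|\le1\}.
\]
So the goal is, for each small $\rho>0$, to build $x$ whose orbit $a_tu_x\Z^3$ eventually never contains a vector in the ``cusp window'' with $N<\rho$, yet contains such vectors with $N\to\rho$ along a sequence of times.

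\textbf{Step 1 (local block).} The plan is to find, for each $\rho\in(0,\rho_0)$, a finite ``excursion'' template that controls the minimum of $N$ exactly. Concretely, we take an integer vector $(q,p)$ with $q^{1/2}\|qx-p\|=\rho'$ for a prescribed $\rho'$ close to $\rho$, and require that every other integer vector in the relevant range has $N\ge\rho'$ up to a small error. In dimension one this is Hall's continued-fraction mechanism. Here we replace it by a geometric parametrization. At a time $t$ when a short primitive vector $v$ appears, the lattice is close to $\Z v\oplus\Lambda'$, with $\Lambda'$ a 2-dimensional lattice of covolume $\approx 1/\|v\|$. The value of $N$ realized at this excursion is a continuous function $F$ of the position of $v$ relative to $\Lambda'$. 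The key point is that $F$ depends only on the norm, so no smoothness is needed: we only use continuity and the intermediate value theorem.

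\textbf{Step 2 (range of the local invariant).} We then show that, as the excursion depth and the direction of $v'$ vary within a small region, $F$ takes every value in an interval $(0,\rho_1)$. Moreover, the minimum of $N$ over the excursion is attained only at $v$ itself, with all other lattice vectors in the window having $N\ge F+\delta$ for some margin $\delta$. This is where the Hall-type ``sum of Cantor sets'' argument would normally enter. We will instead try a direct continuity argument: the deeper the cusp excursion, the smaller $\rho$ is, and all competing vectors have $N$ bounded below by a constant depending only on the norm.

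\textbf{Step 3 (concatenation).} Next we build $x$ as a limit of a nested sequence of compact sets of parameters. Between consecutive deep excursions we insert long stretches where the orbit stays in a fixed compact set $K$ in which every vector in the window has $N\ge\rho_0$; the existence of bounded orbits via Schmidt games or Kleinbock–Margulis guarantees that such stretches exist. At the $k$-th excursion we choose the parameter so that $F=\rho+\epsilon_k$ with $\epsilon_k\downarrow0$. Continuity of $F$ and nestedness then give an $x$ with liminf exactly $\rho$. Irrationality of $x$ follows because the orbit returns infinitely often to $K$ and never diverges.

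\textbf{Main obstacle.} The hardest part is the transitions: showing that during the entry to and exit from each excursion, no lattice vector other than $v$ has $N<\rho$. This requires quantitative control of how the 3-dimensional lattice decomposes near the cusp, including vectors of the form $mv+w$, so that the contributions of the excursion can be decoupled from the bounded pieces. For this we would first try a Minkowski-reduced basis argument, using the fact that during a single deep excursion, vectors not in $\Z v$ have norm at least roughly $\|v\|^{-1/2}$. This yields $N\gtrsim$ a constant uniform in $\rho$, with room to spare once $\rho_0$ is small.
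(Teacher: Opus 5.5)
There is a genuine gap, and it sits exactly at the step you flag as the main obstacle. Your resolution rests on the claim that during a deep excursion every lattice vector outside $\Z v$ has norm $\gtrsim\|v\|^{-1/2}$. This is not true in general. Minkowski's second theorem gives only $\lambda_2\lambda_3\asymp\|g_tv\|^{-1}$, so $\lambda_2$ can lie anywhere between $\lambda_1$ and roughly $\|g_tv\|^{-1/2}$.

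Along the orbit, in the first half of the excursion $g_tv$ is nearly parallel to $e_1$, and $g_t$ acts on the quotient $g_t\Lambda/\Z g_tv$ conformally. After the deepest moment, however, $g_tv$ turns toward the $\R^2$-plane. The quotient, now sitting in $\R e_1\oplus\R^2/\R\hat v'$, is acted on by $\operatorname{diag}(e^{-2t},e^{t})$. After normalizing covolume, it runs a geodesic flow on $\SL_2(\R)/\SL_2(\Z)$ for a time of order $\log(1/\rho)$.

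Nothing prevents that geodesic from entering the cusp. The orbit then passes near a corner of the cusp of $X_3$ where a rank-two sublattice $\Z v+\Z w$ has small covolume and a second vector $w$ becomes short. Nothing in your argument bounds $N(w)$ below by $\rho$, let alone by a constant depending only on the norm. So cleanliness of an excursion (your Step 2 claim that all competitors have $N\ge F+\delta$) is not a consequence of depth. It is a condition on the parameter that has to be imposed and shown to be achievable.

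The second, more fundamental, gap is the exact-value mechanism. The map $x\mapsto N(v)$ trivially takes every value, so the intermediate value argument of Steps 1--2 is vacuous. The real difficulty is that the same parameter that fixes $N(v)=\rho+\epsilon_k$ also determines the exit configuration and the entire continuation. For a fixed compact $K$, Schmidt games or Kleinbock--Margulis give bounded continuations only for a closed, nowhere dense, Cantor-like set of parameters in each ball. They do not give one for the particular parameter realizing the prescribed depth, and the image of such a set under $x\mapsto N(v)$ need not contain an interval. This coupling is exactly why Hall needs sums of Cantor sets in dimension one. Note also that the earlier direct constructions cited in the introduction obtain only windows of values, not exact ones.

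To close the gap you would need a genuine decoupling. For example, you could show that at the deepest time the component of the perturbation along $\hat v'$ tunes $N(v)$, while the transverse component moves the quotient along its unstable horocycle. You would then need that, for every value of the first in an interval, some choice of the second gives a clean excursion and an exit into a fixed compact set from which the construction continues, uniformly and for an arbitrary (non-smooth) norm. None of this is in the proposal.

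For comparison, the paper avoids cusp excursions altogether. Large heights come from bounded invariant sets $\mathcal K(\theta)$ whose fiber circles $\mathcal E(\theta)$ degenerate as $\theta$ approaches a parabolic endpoint of $I_{\mathrm{ell}}$. The continuous parameter $\theta$ lives in the neutral $\SL_2(\R)$ fiber, independent of the horseshoe base, and this is what replaces the Cantor-set argument. Exactness then follows from continuity of $\theta\mapsto\Delta(E,\theta)$ together with concatenating blocks $(\mathsf a^{n_i}\mathsf b)^{L_i}$ whose fiber returns $P_{n_i}^{L_i}$ are close to $I_2$.
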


Very recently, Kleinbock~\cite{Kleinbock} proved, for arbitrary norms and $d\ge2$, that $\overline{\mathcal L_d}$ is a closed ray, in stark contrast with the classical spectrum, which has three distinct regimes.  Our result leaves open the possibility that the maximal ray in $\mathcal{L}_2$ is preceded by a dense part, which itself could potentially be an alternating union of countable and uncountable intervals\footnote{with respect to the induced linear order}.\footnote{For the Euclidean norm in dimension two, \cite[footnote on p. 173]{Cassels} combined with Kleinbock's result shows that the endpoint of the ray is not attainable.}

\subsection{Prior results on higher dimensional spectra}
Akhunzhanov and Moshchevitin~\cite{AkhunzhanovMoshchevitin} and Akhunzhanov~\cite{Akhunzhanov} gave partial information about higher-dimensional Lagrange spectra by realizing \emph{best approximation constants}\footnote{A name some authors use to refer to the reciprocal of a \emph{Lagrange constant}.}
\[
\liminf_{q\to\infty}q^{1/d}\|qx\|_{\T^d}
\]
in multiplicative windows near zero whose relative widths tend to zero.  Further progress appeared very recently in the work of Kleinbock and Ward, but before describing these we should mention substantial related progress that was made on the \emph{Dirichlet spectrum}.  

In simultaneous approximation in dimension $d$, the Dirichlet constant is
\[
\limsup_{Q\to\infty}Q^{1/d}
\min_{1\le q\le Q}\|qx\|_{\T^d}.
\]
It measures uniform approximation,
whereas the Lagrange spectrum is defined by the 
liminf of best approximation constants.
Akhunzhanov and Shatskov~\cite{AkhunzhanovShatskov} proved that the Dirichlet spectrum is an interval for the Euclidean norm in dimension two, and Schleischitz~\cite{Schleischitz}
established this for the supremum norm in every dimension at least two.
Further developments appear in~\cite{KleinbockRao,Agin,HussainSchleischitzWard}.
Agin and Weiss~\cite{AginWeiss} established the interval property for arbitrary norms and more generally for $m\times n$ matrices with $\max(m,n)>1$, and proved that every value is attained on a dense uncountable set of matrices.
Thus the interval structure of the higher-dimensional Dirichlet spectrum is well-understood, while the corresponding question for the Lagrange spectrum remains open.

Kleinbock~\cite{Kleinbock} proved recently that $\overline{\mathcal L_d}$ is a closed ray for arbitrary norms and $d\ge2$. His argument proceeds through the full dynamical Lagrange spectrum, which allows arbitrary initial lattices. In particular, his Theorem~1.3 implies that, for the flow considered here, the set
\[
\left\{
\limsup_{t\to\infty}\frac{1}{\sys(g_t\Lambda)}
:\Lambda\in X_3
\right\}
\]
is a closed ray, where $\sys$ is the shortest-vector length.
We note that the Lagrange spectrum is related to \[
\left\{
\limsup_{t\to\infty}\frac{1}{\sys(g_th_x\Z^3)}
:x\in \R^2
\right\}
\] which is a restriction to special initial lattices
$h_x\Z^3$. 
Kleinbock showed density of the Lagrange spectrum in a closed ray, but does not establish uncountability of the spectrum or the existence of a Hall ray. 

In another recent preprint, Ward~\cite{Ward} obtained quantitative information about the vectors realizing small best approximation constants.
For the Euclidean norm, he showed there is a fixed $\delta>0$ such that,
for every sufficiently small $\varepsilon>0$, the vectors whose constants lie in
\[
[\varepsilon,\varepsilon(1+\delta\varepsilon^d)]
\]
form a set of positive Hausdorff dimension.
For suitably wider windows, these dimensions tend to $d$ as
$\varepsilon\to0$.
The existence of a Hall ray for square-matrix approximation was obtained by Ward~\cite[Theorem~7.1]{Ward} with respect to the supremum norms, corresponding to the $(n,n)$-flow
\[
\operatorname{diag}(e^tI_n,e^{-t}I_n).
\]
His argument uses the diagonal embedding $x\mapsto xI_n$, which preserves the best approximation constant and thus embeds the classical one-dimensional Hall ray into the square-matrix spectrum. 
For the $(2,1)$-flow, addressed by Theorem~\ref{thm:proper-ray} below, no such reduction argument is available.  

\begin{remark}
The proof of our main result is independent of the results obtained in \cite{Kleinbock} and \cite{Ward}.  
\end{remark} 

\subsection{Dynamical reformulation and idea of proof}
In dimension one, continued fractions give a discrete dynamical description of the Lagrange spectrum. Let $\sigma$ be the left shift on $\N^\Z$ and define
\[
f((a_j)_{j\in\Z})
=[a_0;a_1,a_2,\ldots]+[0;a_{-1},a_{-2},\ldots].
\]
Perron's formula gives
\[
\mathcal L_1
=
\left\{
\limsup_{n\to\infty}f(\sigma^n a):a\in\N^\Z
\right\};
\]
see~\cite[p.~146]{Moreira}.
The shift is the symbolic form of the natural extension of the Gauss map,
and the two terms in $f$ depend separately on the future and the past.
This description connects the spectrum to sums of continued-fraction Cantor sets and gives the classical construction of a Hall ray.
A comparably explicit symbolic description of the Lagrange spectrum for simultaneous approximation in dimension $d\ge2$ is not known in general. 

In this paper, despite the lack of an analogue for Perron's formula, we construct a symbolic return system for the continuous diagonal flow on $X_3$. 
We use the dynamical interpretation of Lagrange spectrum via the Dani correspondence. Denote the space of unimodular lattices by $X_3=\SL_3(\R)/\SL_3(\Z).$
Consider the diagonal flow
$
g_t=\operatorname{diag}(e^{-2t},e^t,e^t)
$ acting by left multiplication on $X_3$.

For $x\in\R^2$, let
$$h_x=\begin{pmatrix}1&0\\-x&I_2\end{pmatrix}.$$
Dani's correspondence~\cite{Dani} relates $\liminf q^{1/2}\|qx\|_{\T^2}$ to the liminf of the systole of the orbit $g_th_x\Z^3$. We thus reduce Theorem \ref{thm:main} to establishing a continuous ray for the systole of the orbit.

More precisely, given a \emph{height function}, i.e. a continuous proper $F:X_3\to[0,\infty)$, we define 
\begin{equation}
    \mathcal{L}_F(3) := \left\{ \limsup_{t\to\infty} F(g_th_x\Z^3) : x \in \R^2 \right\}.
\end{equation}

In Section~2 we deduce Theorem \ref{thm:main} from the following using $F(\Lambda)=1/\sys(\Lambda)$.  

\begin{theorem}\label{thm:proper-ray}
For any height function $F$, $\mathcal{L}_F(3)$ contains the ray $(T,\infty)$ for some $T>0$.
\end{theorem}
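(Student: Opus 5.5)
We construct the orbit of $h_x\Z^3$ excursion by excursion, so that its cusp excursions have prescribed, continuously tunable heights. The idea is to mimic Hall's argument, with a concatenation scheme in place of sums of Cantor sets. Since $F$ is continuous and proper, large values of $F$ occur only when $\Lambda$ lies deep in the cusp. There the lattice has a very short vector $v$, or a very short covector, i.e.\ a primitive plane of small covolume. In either case the geometry of the excursion of $g_t\Lambda$ through the cusp is governed by a finite-dimensional parameter. In the short-vector case this is the position of $v$ relative to the contracting and expanding directions of $g_t$, together with the induced two-dimensional lattice $\Lambda/\langle v\rangle$ in the quotient. The first step is therefore a \emph{model excursion}: a one-parameter family of lattices $\Lambda_s$, depending continuously on $s$, whose forward orbit enters the cusp once, reaches $\max_t F(g_t\Lambda_s)=s$ for every $s$ in a ray $(T,\infty)$, and then returns to a fixed compact set $K\subset X_3$. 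Continuity of $s\mapsto\max_t F(g_t\Lambda_s)$ follows from continuity of $F$ and uniform control of the excursion times. Surjectivity onto $(T,\infty)$ follows from the intermediate value theorem once we show the maximum tends to $\infty$ as the depth parameter grows and is at most $T$ at the bottom of the family.

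The second step is a \emph{gluing/return mechanism} inside the unstable horospherical direction. The group $\{h_x\}$ is exactly the expanding horospherical subgroup of $g_t$, so perturbing $x$ by a small amount $\delta$ at time $t$ corresponds to moving $g_th_x\Z^3$ by $g_th_\delta g_{-t}$, which expands $\delta$ by $e^{3t}$. We fix a compact set $K$ with $\sup_K F\le T'$, with $T'$ slightly below $T$. Using exponential mixing, or more elementarily the nondivergence and equidistribution of expanding translates of horospherical pieces, we prove a \emph{shadowing lemma}. If $g_{t_n}h_x\Z^3\in K$, then within a ball of $x$-values of radius $\asymp e^{-3t_n}$ we may choose $x'$ so that the orbit of $h_{x'}\Z^3$ agrees with that of $h_x\Z^3$ up to time $t_n$ with $F$-values changed by at most $\epsilon_n$. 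Moreover, after a bounded transition time, this orbit follows a prescribed model excursion $\Lambda_{s_n}$ and returns to $K$. In a bounded transition the orbit stays in a slightly larger compact set, so $F$ stays below $T$. Iterating produces nested closed balls $B_1\supset B_2\supset\cdots$ whose intersection is a single point $x$, which is irrational because the orbit makes unbounded-time excursions.

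The third step is the bookkeeping for the $\limsup$. We fix a target $\rho\in(T,\infty)$ and choose a sequence $s_n\uparrow\rho$ with $\sum\epsilon_n<\infty$. Then along the constructed orbit $F$ is at most $\max(T,s_n+\epsilon_n)$ on the $n$-th segment and attains $s_n-\epsilon_n$, so $\limsup_tF(g_th_x\Z^3)=\rho$. The main obstacle is making the shadowing lemma \emph{uniform}: the later perturbations must not spoil the earlier excursion heights. They must also not create spurious higher peaks in the transition regions. Here the lack of a Perron-type formula bites, since the excursion height depends on both past and future. I would handle this by choosing $B_{n+1}$ of radius $e^{-3t_{n+1}}$ with $t_{n+1}\gg t_n$. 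The effect on $g_th_x\Z^3$ for $t\le t_n+O(1)$ is then $O(e^{-3(t_{n+1}-t_n)})$, and uniform continuity of $F$ on a compact neighborhood of the visited region converts this into $\epsilon_n$. For the model excursion itself, I would first try the explicit family in which $\Lambda_s$ has a short vector along the first coordinate, contracted by $g_t$ and then re-expanded. Its height is an explicit continuous function of the vector's length and tilt, and properness of $F$ gives the divergence needed for the intermediate value argument.
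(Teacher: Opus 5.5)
Your outer architecture is reasonable. Nested perturbations $x\mapsto x+\delta$ in the expanding horospherical group with $|\delta|\lesssim e^{-3t_{n+1}}$ and $t_{n+1}\gg t_n$ do protect earlier peaks, and the limsup bookkeeping in your third step is fine. The gap is the shadowing lemma, which carries essentially all of the difficulty and does not follow from the tools you cite. Take a $U^+$-ball of radius $r$ around $\Lambda\in K$. Mixing or equidistribution of expanding translates produces some point whose $g_s$-image lies within $\eta$ of $\Lambda_{s_{n+1}}$ only after a time $s$ growing like $\log(1/r)+\log(1/\eta)$. You need $\eta\to0$ (to get $\epsilon_n\to0$) and $r\to0$ (to protect the previous excursion), so these transitions are not bounded. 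These results also say nothing about the orbit segments over $[0,s]$ of the particular points that hit the target. Quantitative nondivergence controls one time at a time. By ergodicity, almost every point of the ball eventually enters $\{F>T\}$, so the proportion whose whole segment stays in $\{F\le T\}$ tends to zero as $s\to\infty$. Hence your claim that the transition stays in a slightly larger compact set, and so below $T$, is unsupported. You would need a targeting statement for this shrinking set of parameters, and neither mixing nor nondivergence provides one.

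Bounded transitions cannot rescue this, because $g_t$ is not hyperbolic. Its centralizer contains the non-compact group $\{\operatorname{diag}(1,E):E\in\SL_2(\R)\}$. This neutral coordinate is unchanged by the flow and by $U^+$-perturbations. From $\Lambda\in K$, bounded moves therefore sweep out only a $3$-dimensional set and cannot reach a fixed model family with precision tending to zero. If you instead adapt the model excursion to the current point, you still have to stop the neutral coordinate from accumulating drift over infinitely many excursions, which would eventually push the orbit into the cusp.

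Controlling exactly this is the content of the paper's proof:
1. The return system factors over a hyperbolic horseshoe, so concatenating words gives exact shadowing in the base, while the neutral part becomes the explicit cocycle $E\mapsto EM(z)$.
2. Boundedness comes from elliptic period matrices $P_n$.
3. The intermediate value theorem is applied to $\Delta(E,\theta)$ on the continuous family $\mathcal K(\theta)$; its divergence at the parabolic endpoint plays the role of your model excursion.
4. The repetition counts $L_i$ are chosen so that $\|P_{n_i}^{L_i}-I_2\|$ is summably small. This keeps the accumulated fiber $F_i$ in $B_\delta(I_2)$, where the target height $A$ remains attainable.

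Your proposal has no substitute for this fiber control.
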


To prove Theorem~\ref{thm:proper-ray}, we need to construct families of bounded forward orbits  $\{g_th_x\Z^3:t\ge0\}$. A natural starting point would be periodic orbits, but the flow $g_t$ has no periodic orbits on $X_3$.  One approach is to exploit the cross sections defined by Cheung and Chevallier in \cite{CheungChevallier} or Shapira and Weiss in \cite{ShapiraWeiss} and try to construct bounded orbits using them; however, the first return maps are difficult to compute.  The key observation in the proof of Theorem~\ref{thm:proper-ray} is a \emph{generalized Poincare section}, whose base dynamics form a Smale horseshoe, with the fiber dynamics governed by an $\SL_2(\R)$-valued cocycle. The horseshoe supplies numerous periodic orbits in the base, and selecting finite words with elliptic fiber returns gives bounded lifted orbits whose closures are invariant two-tori. (See Theorem~\ref{thm:invariant-two-tori}.)

From there, we consider Hausdorff limits of those orbit closures, which form a continuous family of $g_t$-invariant compact subsets of $X_3$ (see Theorem~\ref{thm:invariant-three-tori}) and achieve a continuous ray for the height function $F$. Finally, we construct actual orbits given by concatenations of repeated words that achieve the same heights. 

Dynamically, Theorem~\ref{thm:proper-ray} gives a Hall ray for every height function along the initial lattices $h_x\Z^3$.  A key property of $\SL_2\R$ that is needed is that the set of elliptic elements form an open set.  
The same method for Theorem~\ref{thm:proper-ray} should yield an analogous result for the $(2,2)$-flow
$\operatorname{diag}(e^t,e^t,e^{-t},e^{-t})$
on $\SL_4(\R)/\SL_4(\Z)$ with respect to \emph{arbitrary} norms and \emph{height} functions.


\subsection{Related results and open questions}
The Hall-ray phenomenon extends to other rank-one geometric settings.
Parkkonen and Paulin~\cite{ParkkonenPaulin} obtained Hall-ray results
for penetration spectra of geodesics in negative curvature.
Artigiani, Marchese, and Ulcigrai~\cite{ArtigianiMarcheseUlcigrai}
proved that the Lagrange spectrum of every Veech surface contains a Hall ray. This gives an analogue associated with closed $\SL_2(\R)$-orbits in moduli spaces of translation surfaces.
They also established Hall rays for cusp spectra on finite-area hyperbolic surfaces and their persistence under sufficiently small Lipschitz perturbations of cusp height functions~\cite{PersistentHall}.

Hall rays also occur in one-dimensional inhomogeneous approximation, where the irrational slope is fixed and the shift varies. Crisp, Moran, and Pollington~\cite[Theorem~1]{CrispMoranPollington} proved that, for every $\alpha\in\R\setminus\Q$, there exists $c_\alpha>0$ such that
\[
[0,c_\alpha]\subset
\left\{
\liminf_{q\to\infty}q\min_{p\in\Z}|q\alpha-\beta-p|:\beta\in\R
\right\}.
\]
Thus the corresponding reciprocal spectrum contains a Hall ray.
Their proof uses Davenport expansions and Hall's theorem on Cantor sets.
When $\alpha$ has unbounded partial quotients in its ordinary continued fraction expansion, they further show that the entire one-sided inhomogeneous spectrum displayed above is an interval~\cite[Theorem~4]{CrispMoranPollington}.

An immediate next goal is to prove the existence of a Hall ray in every dimension. The method in this paper uses the fact that elliptic matrices form an open subset of $\mathrm{SL}_2\R$ to construct orbits whose fiber dynamics is elliptic. This cannot directly generalize to higher dimension.  It is also natural to consider the problem of the existence of a Hall ray when the parameter $x$ in Theorem~\ref{thm:main} is restricted to a non degenerate one-dimensional subspace of $\R^2$.  

Another question is to understand the level sets of the spectrum. The Lagrange spectrum describes which values occur, but not how many vectors realize a given value. For $t\in\mathcal L_d$, we may also define
\[
 D_d(t)
 =
 \dim_H\{\theta\in\R^d:\liminf_{q\to\infty}q^{1/d}\|q\theta\|_{\T^d}=1/t\}.
\]

The construction in this paper (see Proposition \ref{prop:uncountable-levels}) shows that the level sets $\{\theta\in\R^d:\liminf_{q\to\infty}q^{1/d}\|q\theta\|_{\T^d}=1/t\}$ are uncountable for $t\in (\rho_0^{-1},\infty)$ and $d=2$. It would be an interesting problem to determine the function $D_d(t)$ on the Hall ray. In particular, one may ask for effective lower bounds and determine its asymptotic behavior as $t\to\infty$. Ward \cite{Ward} showed that the Hausdorff dimension of the union of level sets tends to $d$ for suitable windows as $t\to \infty$. One may further ask whether the actual level sets also have Hausdorff dimension $D_d(t)\to d$ as $t\to \infty$.

\subsection{Outline and sketch of proof}

The construction has four steps.
\begin{enumerate}
\item \textbf{A symbolic description of actual lattice orbits.}
Section~\ref{sec:return-system} constructs a virtual transverse section using two integral changes of marking.
In its base coordinates, the two returns form a horseshoe: the future determines an expanding coordinate and the past determines a contracting coordinate.
The remaining fiber coordinate is a matrix in $\SL_2(\R)$.
Thus a word determines the base part of an orbit, while a matrix product records whether its fiber part stays bounded.
The base horseshoe is identified with $\Omega=\{\mathsf a,\mathsf b\}^{\Z}$, and the return map sends $C(z,E)$ to $C(\sigma z,EM(z))$ for $z\in \Omega$.

\item \textbf{Bounded orbits from long periodic words.}
Section~\ref{sec:periodic} begins with the constant word $\mathsf a^\infty$ and shows that its fiber return is conjugate to an irrational rotation by $\omega_{\mathsf a}$.
We then consider the orbit given by the word $(\mathsf a^n\mathsf b)^\infty$, which is a periodic orbit in the base $\Omega$. We identify an open proper subset $ I_{\mathrm{ell}}\subset \mathbb{T}^1$ and show that along any sequence \(n_i\to\infty\) with \(n_i\omega_{\mathsf a}\to\theta\in I_{\mathrm{ell}}\), the induced fiber returns are eventually elliptic and conjugate to an irrational rotation. This shows that the orbit closure of the word $(\mathsf a^{n_i}\mathsf b)^\infty$ is bounded in $X_3$.

\item \textbf{A continuous interval of limiting depths.}
Section~\ref{sec:depths} identifies the orbit closures $\mathcal O_n$ as invariant two-tori and studies their limits as $n$ grows. 

If $n_i\omega_{\mathsf a}\to \theta \in I_{\mathrm{ell}}$, the orbit closures also converge to a compact set $\mathcal O_{n_i}\to \mathcal K(\theta)\subset X_3$ in Hausdorff distance. The limit set  $\mathcal K(\theta)$ can be described purely in terms of $\theta$, and consists of three parts, one of which is an immersed invariant three-torus.
We show that the sets $\mathcal K(\theta)$ vary continuously with respect to $\theta$, and the minimum systoles tend to zero as $\theta$ approaches an endpoint of $I_{\mathrm{ell}}$, so continuity produces an interval of heights achieved by $\mathcal K(\theta)$. In particular, the heights of the actual orbits $\mathcal O_n$ are dense in that interval.

\item \textbf{One orbit with the exact prescribed lower limit.}
In Section~\ref{sec:realization}, we consider  words of the form $$\cdots (\mathsf a^{n_1}\mathsf b)^{L_1}(\mathsf a^{n_2}\mathsf b)^{L_2}(\mathsf a^{n_3}\mathsf b)^{L_3}\cdots$$  to obtain orbits that realize any given height in $(T,\infty)$.
Each repetition is chosen to make its fiber return close to the identity, so the accumulated fiber matrix is in a neighborhood where the desired heights remain attainable.
\end{enumerate}

\subsection{Notation}
We now fix some notation. 
\begin{itemize}
    \item Let $\N_0=\{0,1,2,\ldots\}$. 
    \item Let $\T^1=\R/(2\pi\Z)$. 
    \item For $\theta\in \mathbb T^1$, let
\[
R_\theta=\begin{pmatrix}\cos\theta&-\sin\theta\\ \sin\theta&\cos\theta\end{pmatrix}.
\]
\item  Let $\|\cdot\|_2$ denote Euclidean norm in $\R^2$. Matrix norms in this paper are Euclidean operator norms.
\item For $A\in\SL_2(\R)$, \emph{elliptic}, \emph{parabolic}, and \emph{hyperbolic} mean, respectively,
\[
|\tr A|<2,\qquad |\tr A|=2\text{ and }A\ne\pm I_2,\qquad |\tr A|>2.
\] 
\item Fix a Riemannian metric on $X_3$. Let $d_H$ denote Hausdorff distance between nonempty compact sets in $X_3$.
\end{itemize}

\section*{Acknowledgments}

The authors made substantial use of generative AI to assist with the research, both in the discovery of arguments and the polishing of the exposition.  All AI-generated statements and proofs were carefully checked and revised by the authors, who take full responsibility for their correctness.

\section{Dani correspondence}\label{sec:dani}

On $\R^3$, we consider the norm  \[\|(r,z)\|_{\R^3}:=\max\{|r|,\|z\|_{\R^2}\}.\] For a lattice $\Lambda\subset \R^3$ its systole is given by \[\mathrm{sys}(\Lambda):=\min_{w\in\Lambda\setminus\{0\}}\|w\|_{\R^3}.\]

The lattice $h_x\Z^3$ associated with $x\in \R^2$ consists of vectors $(q,p-qx)$.
Under the flow $g_t$, the first coordinate becomes smaller and the error coordinates become larger.
The smallest length attained by one such vector is therefore determined by the product $|q|^{1/2}\|p-qx\|_{\R^2}$. This gives the following form of Dani's correspondence,
which we note is a specialization of~\cite[Theorem 1.2]{Kleinbock} to our normalization.
We include the proof following the argument of Dani~\cite{Dani} for the convenience of the reader.

\begin{lemma}\label{lem:dani}
For every norm $\|\cdot\|_{\R^2}$ on $\R^2$ and every $x\in\R^2$,
\begin{equation}\label{eq:dani}
\liminf_{t\to\infty}\sys(g_th_x\Z^3)
=\left(\liminf_{q\to\infty}q^{1/2}\|qx\|_{\T^2}\right)^{2/3}.
\end{equation}
\end{lemma}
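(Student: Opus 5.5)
We prove the identity by comparing both sides with a single elementary quantity. Write $c=\liminf_{q\to\infty}q^{1/2}\|qx\|_{\T^2}$. The vectors of $g_th_x\Z^3$ are
\[
w_{t}(q,p)=\bigl(e^{-2t}q,\;e^{t}(p-qx)\bigr),\qquad (q,p)\in\Z\times\Z^2 .
\]
For a fixed nonzero $(q,p)$ we minimize $\|w_t(q,p)\|_{\R^3}=\max\{e^{-2t}|q|,\,e^t\|p-qx\|_{\R^2}\}$ over $t$. If $q\neq0$ and $p\neq qx$, the first entry decreases and the second increases in $t$. The minimum is therefore attained when they are equal, i.e. $e^{3t}=|q|/\|p-qx\|$, and its value is
\[
\bigl(|q|\,\|p-qx\|^2\bigr)^{1/3}=\bigl(|q|^{1/2}\|p-qx\|\bigr)^{2/3}.
\]
This is the exponent $2/3$ in \eqref{eq:dani}. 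The critical time $t_{q,p}=\tfrac13\log(|q|/\|p-qx\|)$ tends to infinity along any sequence of vectors with $|q|\to\infty$ and bounded $|q|^{1/2}\|p-qx\|$.

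\textbf{Inequality $\le$.} Choose $q_k\to\infty$ with $q_k^{1/2}\|q_kx-p_k\|\to c$. If $c=0$ with $x$ rational, then $q_kx=p_k$ exactly for suitable $q_k$, and $\|w_t\|=e^{-2t}q_k$ is as small as we like at late times, so both sides vanish. Otherwise evaluate at $t_k=t_{q_k,p_k}\to\infty$. (If $t_k$ does not tend to infinity, then $q_k^{1/2}\|\cdot\|\ge c_0 q_k^{3/2}$ up to constants, contradicting boundedness.) This gives $\sys(g_{t_k}h_x\Z^3)\le (q_k^{1/2}\|q_kx-p_k\|)^{2/3}\to c^{2/3}$.

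\textbf{Inequality $\ge$.} Suppose $t_k\to\infty$ and $\sys(g_{t_k}h_x\Z^3)\to s<c^{2/3}$. Realize each systole by a vector $w_{t_k}(q_k,p_k)$, with $q_k\ge0$ after replacing by the negative. Then $e^{-2t_k}q_k\le s+o(1)$ and $e^{t_k}\|p_k-q_kx\|\le s+o(1)$. Multiplying the first bound by the square of the second eliminates $t_k$:
\[
q_k\|p_k-q_kx\|^2\le (s+o(1))^3 .
\]
The degenerate cases are handled as follows.
\begin{itemize}
\item If $q_k=0$, then $\|w\|=e^{t_k}\|p_k\|\ge e^{t_k}\min_{p\ne0}\|p\|\to\infty$, which contradicts the bounded systole (using Minkowski: $\sys\le 1$ up to a norm constant).
\item If $q_k$ stays bounded, then $\|p_k-q_kx\|\le (s+o(1))e^{-t_k}\to0$. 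Hence $q_kx\in\Z^2$ for some fixed $q_k$, so $x$ is rational. For rational $x$, $c=0$ and there is nothing to prove.
\end{itemize}
So for irrational $x$ we have $q_k\to\infty$, and $q_k^{1/2}\|q_kx\|_{\T^2}\le(s+o(1))^{3/2}$. This gives $c\le s^{3/2}$, a contradiction. Taking the liminf yields $\ge$.

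The main care point is this degenerate bookkeeping: the cases $q=0$, bounded $q$ (rational $x$), and $c=\infty$ (impossible by Dirichlet, but harmless). Beyond that, the argument is the one-line optimization in $t$.
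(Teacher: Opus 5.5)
Your proposal is correct and follows essentially the same route as the paper. Both arguments optimize each vector's length over $t$ to get $\bigl(|q|^{1/2}\|p-qx\|_{\R^2}\bigr)^{2/3}$, obtain the upper bound by evaluating at the critical times $e^{3t}=|q|/\|p-qx\|_{\R^2}$, and obtain the lower bound by ruling out $q=0$ and bounded $q$ (the latter forcing $x\in\Q^2$), so that systole-realizing vectors have $|q|\to\infty$. The only cosmetic differences are that the paper disposes of rational $x$ at the outset, and it gets $t_{q,p}\to\infty$ from the covering-radius bound on $\|qx\|_{\T^2}$ rather than from boundedness of $q^{1/2}\|qx\|_{\T^2}$.
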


\begin{proof}
If $x\in\Q^2$, choose $q_0\in\N$ such that $q_0x\in\Z^2$.
Then $\|kq_0x\|_{\T^2}=0$ for every $k\in\N$, while
\[
0<\sys(g_th_x\Z^3)\le q_0e^{-2t}\longrightarrow0.
\]
Thus both sides of~\eqref{eq:dani} vanish.
Henceforth assume $x\notin\Q^2$.

For $q\in\Z\setminus\{0\}$ and $p\in\Z^2$, write $d(q,p)=\|qx-p\|_{\R^2}>0$.
Balancing the two terms in the maximum gives
\begin{equation}\label{eq:direct-min}
\begin{split}
\min_{t\in\R}\|g_th_x(q,p)\|_{\R^3}
&=\min_{t\in\R}\max\{e^{-2t}|q|,e^t d(q,p)\}\\
&=\bigl(|q|^{1/2}d(q,p)\bigr)^{2/3}.
\end{split}
\end{equation}
The minimum occurs when $e^{3t}=|q|/d(q,p)$.
For the upper bound in~\eqref{eq:dani}, choose $q\to\infty$ along a sequence realizing the approximation lower limit and choose a nearest integer vector $p$ with respect to $\|\cdot\|_{\R^2}$.
The quantities $d(q,p)=\|qx\|_{\T^2}$ are bounded by the covering radius of $\Z^2$ for $\|\cdot\|_{\R^2}$, so the minimizing times tend to infinity.

By Minkowski’s convex body theorem, the systole is uniformly bounded above on \(X_3\).
Vectors with $q=0$ have length at least $e^t\min_{p\in\Z^2\setminus\{0\}}\|p\|_{\R^2}$.
For every fixed $Q$, vectors with $0<|q|\le Q$ have lengths tending uniformly to infinity, because
$\min_{1\le q\le Q}\|qx\|_{\T^2}>0$.
Thus shortest vectors have $|q|\to\infty$ as $t\to\infty$.
Their lengths are bounded below by~\eqref{eq:direct-min}, which proves the reverse inequality.

\end{proof}

This gives a proof of Theorem \ref{thm:main}, given Theorem \ref{thm:proper-ray}.

 \begin{proof}[Proof of Theorem~\ref{thm:main} given Theorem~\ref{thm:proper-ray}]
Take $
F(\Lambda)=\sys(\Lambda)^{-1}$ which is positive and continuous.
By Mahler's compactness criterion, for every
$R>0$,
\[
\{\Lambda\in X_3:F(\Lambda)\le R\}
=
\{\Lambda\in X_3:\sys(\Lambda)\ge R^{-1}\}
\]
is compact. Thus $F$ is continuous and proper.

Let $T>0$ be given by Theorem~\ref{thm:proper-ray}, and put
$\rho_0=T^{-3/2}$. For every $0<\rho<\rho_0$, we have
$\rho^{-2/3}>T$, so there exists $x\in\R^2$ such that
\[
\limsup_{t\to\infty}F(g_th_x\Z^3)=\rho^{-2/3}.
\]

We may therefore apply~\eqref{eq:dani} to obtain
\[
\left(\liminf_{q\to\infty}q^{1/2}\|qx\|_{\T^2}\right)^{2/3}
=\liminf_{t\to\infty}\sys(g_th_x\Z^3)
=\left(\limsup_{t\to\infty}F(g_th_x\Z^3)\right)^{-1}
=\rho^{2/3}.
\]
Taking the $3/2$ power proves the assertion.
\end{proof}

\section{The return map and symbolic coding}\label{sec:return-system}\label{sec:section}

In this section, we define a virtual section and a return map that factors over a standard shift.

\subsection{Transversals, Poincare sections and return maps}
Let $\mathcal{S}$ be a compact subset of a space carrying an $\R$-action $(g_t)_{t\in\R}$.  We call $\mathcal{S}$ a \emph{transversal}, or a \emph{section}, if for each $x$ in $\mathcal{S}$ there is a $\delta>0$ such that $g_tx\not\in\mathcal{S}$ for all $0<|t|<\delta$.  It is \emph{forward invariant}, or a \emph{Poincare section}, if the set of \emph{positive return times} 
  $$\tau^+_\mathcal{S}(x):=\{~ t>0 ~:~ g_tx\in \mathcal{S} ~\} $$
is nonempty for \emph{every} $x$ in $\mathcal{S}$.  
The \emph{first return map} $\mathcal{F}_1:\mathcal{S}\to\mathcal{S}$ is defined by $$\mathcal{F}_1(x):= g_{\tau_1(x)}x$$ where $\tau_1(x):=\min \tau^+_\mathcal{S}(x)$ is the \emph{first return time}.  

The first return map is usually difficult to compute and more flexibility is gained by considering a larger class of return maps.  
By a \emph{positive return map} we mean a function $\mathcal{F}:\mathcal{S}\to\mathcal{S}$ with the property that for each $x$ in $\mathcal{S}$ there exists $t>0$ such that $\mathcal{F}(x)=g_tx$.  
Every positive integer-valued function $n:\mathcal{S}\to\Z_+$ determines a positive return map via 
$$ \mathcal{F}_n(x) = \mathcal{F}_1^{n(x)}(x) $$
and every such map arises in this manner.

In this section, we construct a subset $\tilde\Sigma\subset \mathrm{SL_3\R}$ in the space of marked frames, with a local coordinates parametrization $$\Tilde\Sigma \to \R^2\times \R^2\times \mathrm{SL_2\R}.$$ We call the $\R^2\times \R^2$ part the \emph{base} and the $\mathrm{SL_2\R}$ part the \emph{fiber}. 

Consider a compact subset of the virtual section $\widetilde\Sigma$ projected to $\mathrm{SL}_3\R/\mathrm{SL}_3\Z$ where the first return map is well defined.  Via Proposition~3.1 below we shall identify a positive return map (not necessarily first) of $\mathcal{S}\subset \tilde\Sigma$ that factors over a horseshoe $\mathcal{H}=\bigcap U_n \times \bigcap V_n$ where the sets $U_n$ are determined by a pair of contracting maps $\Phi_{\mathsf a},\Phi_{\mathsf b}:U\to U$ with disjoint images by setting $U_0=U$ and $U_{n+1}=\Phi_{\mathsf a}(U_n)\cup\Phi_{\mathsf b}(U_n)$, and the sets $V_n$ are similarly determined by a pair $\Psi_{\mathsf a},\Psi_{\mathsf b}:V\to V$.  Here, $U$ and $V$ are compact subsets of the unstable and stable subspaces, respectively. This gives a symbolic encoding of the system restricted to the base. The fiber coordinates will in turn become a cocycle over the base.

\subsection{The virtual section $\Tilde\Sigma$}

A marked frame is a matrix $g\in\SL_3(\R)$, with associated lattice $g\Z^3$.
Write $g=\left(\begin{smallmatrix}a&b\\c&D\end{smallmatrix}\right)$ in $1+2$ block form.
On the set $a>0$, $\det D>0$, put
\[
H(g)=\log\frac{\det D}{a^2}.
\]
The flow rescales the first row by $e^{-2t}$ and the last two rows by $e^t$, so
\begin{equation*}
H(g_tg)=H(g)+6t.
\end{equation*}
Thus
\begin{equation*}
\widetilde\Sigma=\{g:H(g)=0\}
\end{equation*}
is transverse to the marked-frame flow.
Its coordinates are
\begin{equation*}
v=b/a,\qquad u=-D^{-1}c,\qquad E=D/a\in\SL_2(\R).
\end{equation*}
Conversely, for a column $u$, a row $v$, and $E\in\SL_2(\R)$ with $1+vu>0$, set
\begin{equation*}
C(u,v,E)=(1+vu)^{-1/3}\begin{pmatrix}
    1&0\\0&E
\end{pmatrix}\cdot
\begin{pmatrix}1&v\\-u&I_2\end{pmatrix}.
\end{equation*}
The Schur complement gives $\det C(u,v,E)=1$ and shows that this parametrizes $\widetilde\Sigma$ uniquely.
In particular,
\begin{equation*}
C(u,v,E)=\operatorname{diag}(1,E)C(u,v,I_2).
\end{equation*}
The image of the section in $X_3$ can have self-intersections because different integral markings represent the same lattice.
We work with prescribed changes of marking given below.

\subsection{Two integral matrices and their return maps}

Let
\begin{equation*}
U=[-1/5,1/5]\times[-1/50,1/50],\qquad
V=[-1/50,1/50]\times[-1/5,1/5],
\end{equation*}
with elements of $U$ viewed as columns and elements of $V$ as rows.
Choose the integral changes of marking
\begin{equation}\label{eq:Gamma-ab}
\Gamma_{\mathsf a}=\begin{pmatrix}101&-1&-10\\10&0&-1\\1&0&0\end{pmatrix},
\qquad
\Gamma_{\mathsf b}=\begin{pmatrix}101&-1&10\\10&0&1\\-1&0&0\end{pmatrix}
\in\SL_3(\Z).
\end{equation}

We compute the positive return maps that exactly change the marking from $\tilde\Sigma$ to $\tilde\Sigma\Gamma_s^{-1}, s\in\{\mathsf a,\mathsf b\}$ as follows.

\begin{proposition}\label{prop:return}
For $s\in\{\mathsf a,\mathsf b\}$, and $E\in\SL_2(\R)$, there exist unique maps $\Phi_s: U\to U$, $\Psi_s:V\to V$, $M_s: U\to \mathrm{SL}_2\R$ and a function $h_s:U\times V\to (1,\infty)$, such that
\begin{equation}\label{eq:exact-marked-return}
g_{h_s(u,v)}C(\Phi_s(u),v,E)\Gamma_s
=C\bigl(u,\Psi_s(v),EM_s(u)\bigr)
\end{equation}

Consequently,
\begin{equation*}
g_{h_s(u,v)}C(\Phi_s(u),v,E)\Z^3
=C\bigl(u,\Psi_s(v),EM_s(u)\bigr)\Z^3.
\end{equation*}
\end{proposition}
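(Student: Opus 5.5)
The plan is a direct block-matrix computation. We multiply $C(u',v,E)\Gamma_s$ out in $1+2$ block form and read off the section coordinates of the result. The time $h_s$ is then forced by the identity $H(g_tg)=H(g)+6t$, and uniqueness comes from the uniqueness of the parametrization $C(u,v,E)$ of $\widetilde\Sigma$.

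\textbf{Block computation.} Write $\Gamma_s=\left(\begin{smallmatrix}\alpha&\beta\\ \gamma&\Delta\end{smallmatrix}\right)$ in block form. For $\Gamma_{\mathsf a}$ this means $\alpha=101$, $\beta=(-1,-10)$, $\gamma=(10,1)^T$ and $\Delta=\left(\begin{smallmatrix}0&-1\\0&0\end{smallmatrix}\right)$. For $\Gamma_{\mathsf b}$ we have $\gamma=(10,-1)^T$ and $\Delta=\left(\begin{smallmatrix}0&1\\0&0\end{smallmatrix}\right)$. Using $C(u',v,E)=(1+vu')^{-1/3}\operatorname{diag}(1,E)\left(\begin{smallmatrix}1&v\\-u'&I_2\end{smallmatrix}\right)$, the product $C(u',v,E)\Gamma_s$ has blocks
\[
a'=\lambda(\alpha+v\gamma),\quad b'=\lambda(\beta+v\Delta),\quad c'=\lambda E(\gamma-\alpha u'),\quad D'=\lambda E(\Delta-u'\beta),
\]
where $\lambda=(1+vu')^{-1/3}$. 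The flow $g_t$ only rescales rows. Hence the quantities $b/a$ and $-D^{-1}c$ are unchanged along the flow, and $D/a$ changes only by a positive scalar.

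\textbf{Reading off the coordinates.}
\begin{itemize}
\item The new row coordinate is $\Psi_s(v)=(\beta+v\Delta)/(\alpha+v\gamma)$. This depends only on $v$.
\item The new column coordinate is $u=(\Delta-u'\beta)^{-1}(\alpha u'-\gamma)$. Rearranged, this is the linear equation $u'(\alpha+\beta u)=\Delta u+\gamma$, which inverts explicitly to
\[
\Phi_s(u)=\frac{\Delta u+\gamma}{\alpha+\beta u}.
\]
This is the unique preimage, and it depends only on $u$.
\item The fiber becomes $E\,(\Delta-u'\beta)/(\alpha+v\gamma)$ up to a positive scalar. Forcing determinant one gives
\[
M_s(u)=\frac{\Delta-\Phi_s(u)\beta}{\sqrt{\det(\Delta-\Phi_s(u)\beta)}},
\]
which is independent of $v$ and $E$.
\item The time is fixed by $H=0$ at the end:
\[
h_s(u,v)=\tfrac16\log\frac{(\alpha+v\gamma)^2}{\det(\Delta-\Phi_s(u)\beta)}.
\]
The factor $\lambda$ and the matrix $E$ cancel in this ratio.
\end{itemize}
With these choices the identity \eqref{eq:exact-marked-return} holds. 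Uniqueness follows because $H$ increases strictly along the flow and $C$ parametrizes $\widetilde\Sigma$ bijectively. The consequence about lattices is immediate since $\Gamma_s\in\SL_3(\Z)$.

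\textbf{Main step: the numerical checks.} The substantive work is verifying that the formulas are well defined and land in the right sets.
\begin{itemize}
\item \emph{$\Phi_s(U)\subset U$.} For $u=(u_1,u_2)\in U$ we have $\alpha+\beta u=101-u_1-10u_2\approx101$. So $\Phi_{\mathsf a}(u)\approx(10/101,\,1/101)$, which lies inside $U$. For $\mathsf b$ the second coordinate is $\approx-1/101$.
\item \emph{Positivity of $\det D'$.} A direct computation gives $\det(\Delta-u'\beta)=\pm u'_2$, with the sign matching $\gamma_2$. This is $\approx1/101>0$ in both cases.
\item \emph{$\Psi_s(V)\subset V$.} Here $\alpha+v\gamma\approx101$ for $v\in V$, so $\Psi_s(v)\approx(-1,\mp10)/101$. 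In particular $|\Psi_s(v)_1|\le1/50$ and $|\Psi_s(v)_2|\le1/5$.
\item \emph{Remaining positivity.} We need $a'>0$ and $1+vu'>0$. Both are clear, since $|vu'|$ is tiny.
\item \emph{$h_s>1$.} We get $h_s\approx\frac16\log(101^3)\approx2.3>1$.
\end{itemize}
Checking these inequalities uniformly over $U\times V$, with explicit crude bounds, is routine but must be done carefully. It is the only real obstacle.
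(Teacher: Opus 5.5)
Your proposal is correct and is essentially the paper's argument. The paper imposes block-diagonality of $C(w,v,I_2)\Gamma_sC(u,y,I_2)^{-1}$, whereas you read off the flow-invariant quantities $b/a$ and $-D^{-1}c$ and fix the time with $H$. Both routes give the same $\Phi_s,\Psi_s,M_s$, and your $e^{6h_s}=(\alpha+v\gamma)^2/\det(\Delta-\Phi_s(u)\beta)=(\alpha+v\gamma)^2(\alpha+\beta u)$ agrees with the paper's $e^{2h_s}=\lambda_s$ via the identity $(\alpha+\beta u)(1+v\Phi_s(u))=(\alpha+v\gamma)(1+\Psi_s(v)u)$. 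One small slip: for $\Gamma_{\mathsf b}$ the row block is $\beta=(-1,10)$, so there $\alpha+\beta u=101-u_1+10u_2$; this leaves all your estimates unchanged.
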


\begin{proof}
Since $g_t$ commutes with $\begin{pmatrix}
    1&0\\0&E
\end{pmatrix}$ for all $E$, we only need to prove the Proposition for $E=I_2$. 

Write $\Gamma_s=\left(\begin{smallmatrix}a_s&b_s\\c_s&D_s\end{smallmatrix}\right)$.
For a matrix $\begin{pmatrix}1&y\\-u&I_2\end{pmatrix}$, put
$
Q(u,y)=(1+yu)I_2-uy.$
Since $1+yu\ge1-1/125>0$ on $U\times V$, the exact inverse is
\[
\begin{pmatrix}1&y\\-u&I_2\end{pmatrix}^{-1}
=\frac1{1+yu}
\begin{pmatrix}1&-yQ(u,y)\\u&Q(u,y)\end{pmatrix}.
\]
Indeed, $yQ(u,y)=y$ and $\det Q(u,y)=1+yu>0$.

Fix $u\in U$ and $v\in V$, and seek $w\in U$ and $y\in V$ such that
$C(w,v,I_2)\Gamma_sC(u,y,I_2)^{-1}$ is block diagonal.
The scalar normalizations in the charts do not affect the off-diagonal blocks, so this is equivalent to requiring
\[
\begin{pmatrix}1&v\\-w&I_2\end{pmatrix}
\begin{pmatrix}a_s&b_s\\c_s&D_s\end{pmatrix}
\begin{pmatrix}1&-yQ(u,y)\\u&Q(u,y)\end{pmatrix}
\]
to be block diagonal.
Its lower-left and upper-right blocks are, respectively,
\[
c_s+D_su-w(a_s+b_su),
\qquad \bigl(b_s+vD_s-(a_s+vc_s)y\bigr)Q(u,y).
\]
Both denominators $a_s+b_su$ and $a_s+vc_s$ are $\ge 100$ on the given rectangles.
Since $Q(u,y)$ is invertible, the two blocks vanish if and only if
$w=\Phi_s(u)$ and $y=\Psi_s(v)$, where
\begin{equation*}
\Phi_s(u)=\frac{c_s+D_su}{a_s+b_su},\qquad
\Psi_s(v)=\frac{b_s+vD_s}{a_s+vc_s}.
\end{equation*}

For these choices, put $F_s(u)=D_s-\Phi_s(u)b_s$.
Then
\[
C(\Phi_s(u),v,I_2)\Gamma_s
=\left(\frac{1+\Psi_s(v)u}{1+v\Phi_s(u)}\right)^{1/3}\operatorname{diag}(a_s+vc_s,F_s(u))C(u,\Psi_s(v),I_2).
\]
The Schur complement, applied to
$\Gamma_s\left(\begin{smallmatrix}1&0\\u&I_2\end{smallmatrix}\right)$, gives
\[
\det F_s(u)=\frac1{a_s+b_su}>0.
\]
After restoring the chart normalizations, the upper-left entry determines the return time uniquely by
\[
e^{2h_s(u,v)}
=(a_s+vc_s)
\left(\frac{1+\Psi_s(v)u}{1+v\Phi_s(u)}\right)^{1/3}.
\]
The lower block then determines
$M_s(u)=F_s(u)/\sqrt{\det F_s(u)}\in\SL_2(\R)$.
These formulas give exactly $$g_{h_s(u,v)}C(\Phi_s(u),v,I_2)\Gamma_sC(u,\Psi_s(v),I_2)^{-1}=\mathrm{diag}(1,M_s(u))$$ which implies Equation ~\eqref{eq:exact-marked-return}.

In fact, for the matrices in~\eqref{eq:Gamma-ab}, these maps are
\begin{equation}\label{eq:PhiPsi}
\begin{aligned}
\Phi_{\mathsf a}(u)&=\frac{(10-u_2,1)^T}{101-u_1-10u_2},
&\Phi_{\mathsf b}(u)&=\frac{(10+u_2,-1)^T}{101-u_1+10u_2},\\
\Psi_{\mathsf a}(v)&=\frac{(-1,-10-v_1)}{101+10v_1+v_2},
&\Psi_{\mathsf b}(v)&=\frac{(-1,10+v_1)}{101+10v_1-v_2}.
\end{aligned}
\end{equation}

Let
\begin{equation}\label{eq:lambda}
\begin{gathered}
d_{\mathsf a}(u)=101-u_1-10u_2,\qquad d_{\mathsf b}(u)=101-u_1+10u_2,\\
\lambda_s(u,v)=d_s(u)
\left(\frac{1+v\Phi_s(u)}{1+\Psi_s(v)u}\right)^{2/3}.
\end{gathered}
\end{equation}

and 
\begin{equation}\label{eq:explicit-cocycle}
\begin{aligned}
h_s(u,v)&=\tfrac12\log\lambda_s(u,v),\\
M_{\mathsf a}(u)&=\frac1{\sqrt{d_{\mathsf a}(u)}}
\begin{pmatrix}10-u_2&u_1-1\\1&10\end{pmatrix},\\
M_{\mathsf b}(u)&=\frac1{\sqrt{d_{\mathsf b}(u)}}
\begin{pmatrix}10+u_2&1-u_1\\-1&10\end{pmatrix}.
\end{aligned}
\end{equation}

Direct substitution gives $\Phi_s(U)\subset U$, $\Psi_s(V)\subset V$, and $\lambda_s>99$, so $h_s>1$.
\end{proof}

We now show that the base maps $\Phi_s$ and $\Psi_s$ are contractions.

\begin{lemma}\label{lem:branches}
For $s\in\{\mathsf a,\mathsf b\}$, the maps $\Phi_s:U\to U$ and $\Psi_s:V\to V$ are injective contractions satisfying
\begin{equation*}
\begin{aligned}
\|\Phi_s(u)-\Phi_s(u')\|_\infty&\le900^{-1}\|u-u'\|_\infty,\\
\|\Psi_s(v)-\Psi_s(v')\|_\infty&\le900^{-1}\|v-v'\|_\infty.
\end{aligned}
\end{equation*}
The images $\Phi_{\mathsf a}(U)$ and $\Phi_{\mathsf b}(U)$ are disjoint, as are $\Psi_{\mathsf a}(V)$ and $\Psi_{\mathsf b}(V)$.
\end{lemma}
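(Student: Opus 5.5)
The plan is to work directly with the explicit formulas~\eqref{eq:PhiPsi} and get the Lipschitz bound from a derivative estimate on a convex domain. Both $U$ and $V$ are rectangles, hence convex. So by the mean value inequality it suffices to bound the $\infty\to\infty$ operator norm of the Jacobians $D\Phi_s(u)$ and $D\Psi_s(v)$ by $1/900$ uniformly. That norm is the maximal absolute row sum of partial derivatives.

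For $\Phi_{\mathsf a}(u)=N(u)/d_{\mathsf a}(u)$, with $N(u)=(10-u_2,1)^T$, the quotient rule gives
\[
D\Phi_{\mathsf a}=\frac{DN}{d_{\mathsf a}}-\frac{N\,Dd_{\mathsf a}}{d_{\mathsf a}^2},
\qquad DN=\begin{pmatrix}0&-1\\0&0\end{pmatrix},\quad Dd_{\mathsf a}=(-1,-10).
\]
On $U$ we have $|u_1|\le 1/5$ and $|u_2|\le 1/50$, so $d_{\mathsf a}\ge 101-0.2-0.2=100.6$. Also $|N_1|\le 10.02$ and $|N_2|=1$.

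The first row sum is therefore at most
\[
\frac{1}{100.6}+\frac{10.02\cdot 11}{100.6^2}\approx 0.00994+0.01089,
\]
which is about $0.0208$. This is far larger than $1/900\approx 0.0011$. So the stated constant does not follow from a plain sup-norm derivative bound, and this is the main obstacle.

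To get the true constant, first test whether a sharper derivative calculation gives it, or whether the claimed $900^{-1}$ fails. Take $u=0$ and $u'=(0,\epsilon)$. Then
\[
\Phi_{\mathsf a}(u')_1=\frac{10-\epsilon}{101-10\epsilon}\approx \frac{10}{101}-\epsilon\Bigl(\frac{1}{101}-\frac{100}{101^2}\Bigr).
\]
The bracket equals $1/101^2$. So the first coordinate moves by about $\epsilon/10201$, thanks to a near-cancellation of the two terms. The naive bound above ignored the signs, and the rough triangle-inequality estimate is simply too crude.

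The plan is therefore to compute each partial derivative exactly as a single fraction over $d_s^2$ before estimating. This uses the specific integer structure $101=10\cdot10+1$, which is exactly $\det$-type cancellation coming from $\Gamma_s\in\SL_3(\Z)$:

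\begin{itemize}
\item $\partial_{u_2}$ of the first coordinate is $\bigl(-(101-u_1-10u_2)+10(10-u_2)\bigr)/d^2=(u_1-1)/d^2$, which has size about $1/10^4$.
\item $\partial_{u_1}$ of the first coordinate is $(10-u_2)/d^2\approx 10/10^4$.
\item For the second coordinate, the partials are $1/d^2$ and $10/d^2$.
\end{itemize}

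The row sums are then about $11.2/100.6^2\approx 1.1\times10^{-3}$. This is within the stated $1/900\approx1.11\times 10^{-3}$, but only barely. The plan is to verify this carefully with the worst-case bounds $|u_1-1|\le 1.2$, $|10\mp u_2|\le 10.02$, and $d\ge 100.6$:
\[
\frac{1.2+10.02}{100.6^2}\approx 0.001109<\frac{1}{900}.
\]
The same check applies to the $\mathsf b$ branch by the symmetry $u_2\mapsto -u_2$. It applies to $\Psi_s$ because $\Psi_s$ is obtained from $\Phi_s$ by transposing: the roles of the coordinates are swapped, with $V$ the transposed rectangle and the denominators $101+10v_1\pm v_2$.

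Injectivity then follows from the explicit inverse. One solves $w(a_s+b_su)=c_s+D_su$ linearly for $u$, which is the dual Möbius action of $\Gamma_s^{-1}$.

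Disjointness comes from the sign of the second coordinate. $\Phi_{\mathsf a}(U)$ has second coordinate $1/d_{\mathsf a}>0$, while $\Phi_{\mathsf b}(U)$ has second coordinate $-1/d_{\mathsf b}<0$. Similarly, $\Psi_{\mathsf a}(V)$ has second coordinate $-(10+v_1)/(\cdots)<0$, while $\Psi_{\mathsf b}(V)$ has second coordinate $(10+v_1)/(\cdots)>0$.
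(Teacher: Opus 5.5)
Your argument is correct and is essentially the paper's. The paper writes the finite difference $\Phi_{\mathsf a}(u)-\Phi_{\mathsf a}(u')$ exactly over the common denominator $d_{\mathsf a}(u)d_{\mathsf a}(u')$ to expose the same cancellation, bounding the numerator by $(1+10+0.02+0.2)\|u-u'\|_\infty$; you do this for the Jacobian and integrate along segments of the convex rectangle, reaching the same constant $11.22/100.6^2<900^{-1}$, while your sign argument for disjointness and your reduction of the other three maps by $\|\cdot\|_\infty$-isometries (note $\Psi_{\mathsf a}$ needs the signed swap $(v_1,v_2)\mapsto(-v_2,-v_1)$, not a plain transpose) fill in steps the paper leaves implicit.
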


\begin{proof}
All denominators in~\eqref{eq:PhiPsi} are at least $100.6$.
Without loss of generality, we take $\Phi_{\mathsf a}$ for example; the computations for the other three maps are similar. Taking the difference $\Phi_{\mathsf a}(u) - \Phi_{\mathsf a}(u')$, we obtain that
\begin{equation*}
    \begin{aligned}
    &\Phi_{\mathsf a}(u) - \Phi_{\mathsf a}(u') \\
    &= \frac{(-(u_2-u_2') + 10(u_1-u_1')+u_2(u_1'-u_1) + (u_2-u_2')u_1, (u_1-u_1') + 10(u_2-u_2'))^T}{(101-u_1 -10u_2) (101-u_1' -10u_2')}.
    \end{aligned}
\end{equation*}
Taking the absolute value of the coordinates, we have
\begin{equation*}
\begin{aligned}
\left|(\Phi_{\mathsf a}(u) - \Phi_{\mathsf a}(u'))_1\right| &\le \frac{1+10+0.02 +0.2}{100.6^2}\|u-u'\|_\infty, \\
\left|(\Phi_{\mathsf a}(u) - \Phi_{\mathsf a}(u'))_2\right| &\le \frac{1+10}{100.6^2}\|u-u'\|_\infty. \\
\end{aligned}
\end{equation*}
Therefore, 
\[
\| \Phi_{\mathsf a}(u) - \Phi_{\mathsf a}(u') \|_\infty \le \frac{11.22} {100.6^2}\|u-u'\|_\infty \le 900^{-1}\|u-u'\|_\infty.
\]

Injectivity follows from the invertibility of the integral matrices inducing these base maps.
\end{proof}

\begin{lemma}\label{lem:LipMsu}
$M_s:U\to \mathrm{SL}_2\R$ is Lipschitz with respect to the infinity norm for $u$ and operator norm for $M_s(u)$:
$$ \|M_s(u) - M_s(u')\| \le 0.2\| u-u'\|_\infty.$$
\end{lemma}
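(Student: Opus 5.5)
The plan is to split $M_s(u)=d_s(u)^{-1/2}N_s(u)$, where $N_s(u)$ is the affine matrix appearing in \eqref{eq:explicit-cocycle}, and to bound the two factors separately:
\[
M_s(u)-M_s(u')=d_s(u)^{-1/2}\bigl(N_s(u)-N_s(u')\bigr)+\bigl(d_s(u)^{-1/2}-d_s(u')^{-1/2}\bigr)N_s(u').
\]
The argument is written for $s=\mathsf a$. The case $s=\mathsf b$ follows from the same computation with sign changes, since the entries differ only by signs and $d_{\mathsf b}$ has the same range on $U$.

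\textbf{First term.} Here $N_{\mathsf a}(u)-N_{\mathsf a}(u')=\begin{pmatrix}-(u_2-u_2')&u_1-u_1'\\0&0\end{pmatrix}$. Its operator norm is the Euclidean norm of the row $(-(u_2-u_2'),\,u_1-u_1')$, which is at most $\sqrt2\,\|u-u'\|_\infty$. As in Lemma~\ref{lem:branches}, on $U$ we have $d_s(u)\ge 101-0.2-0.2=100.6$. Hence this term is at most $\sqrt2/\sqrt{100.6}\,\|u-u'\|_\infty\approx0.141\,\|u-u'\|_\infty$.

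\textbf{Second term.} By the mean value theorem,
\[
|d^{-1/2}-d'^{-1/2}|\le \tfrac12\,100.6^{-3/2}\,|d-d'|.
\]
Moreover $|d_{\mathsf a}(u)-d_{\mathsf a}(u')|\le |u_1-u_1'|+10|u_2-u_2'|\le 11\|u-u'\|_\infty$. For the operator norm of $N_{\mathsf a}(u')$ I use the Frobenius bound: the entries are bounded by roughly $10.02$, $1.2$, $1$ and $10$, so $\|N_{\mathsf a}(u')\|\le\sqrt{100.4+1.44+1+100}\approx14.25$. This gives a contribution of about $0.5\cdot 11\cdot 14.25/1009\approx0.078$.

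Adding the two terms gives a constant of about $0.219$, which slightly exceeds $0.2$. So the crude bounds need refining.

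\textbf{Refinement (the main obstacle).} The numerics are the only real difficulty. First, replace the Frobenius bound by a sharper operator-norm estimate. $N_{\mathsf a}$ is close to $10I_2$ plus a small perturbation, so $\|N_{\mathsf a}(u')\|\le 10+\|N_{\mathsf a}(u')-10I\|\le 10+\sqrt{0.02^2+1.2^2+1}\approx 11.57$. This lowers the second term to about $0.063$, for a total of about $0.204$, still marginally too large.

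The second refinement exploits the structure of the difference. The first term is a rank-one matrix supported in the top row, while the second is a scalar multiple of $N_{\mathsf a}(u')$. I would therefore compute the operator norm of the sum directly rather than applying the triangle inequality. Alternatively, I would use a sharper $\ell^\infty$-to-Euclidean conversion: the row $(-(u_2-u_2'),\,u_1-u_1')$ is taken with respect to the box, and $|u_2-u_2'|\le 1/25$ while $|u_1-u_1'|\le 2/5$. I would first try bounding the sum entrywise, then estimating its operator norm via $\sqrt{\|A\|_1\|A\|_\infty}$. I expect this to give a constant below $0.2$ comfortably. If it does not, I would parametrize the segment from $u'$ to $u$, bound $\|\partial_{u_i}M_s\|$ explicitly, and integrate. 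The required bound is $|\partial_{u_1}M|+|\partial_{u_2}M|\le0.2$ in operator norm. By inspection the $u_2$-derivative is dominated by $\tfrac{10}{2}d^{-3/2}N+d^{-1/2}e_{11}$-type terms of size about $0.05+0.1$, and the $u_1$-derivative by about $0.1+0.006$. Whichever route is used, the conclusion should land near, and with care below, $0.2$.
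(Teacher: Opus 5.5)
\textbf{Gap: the constant $0.2$ is never established.} Your decomposition
\[
M_s(u)-M_s(u')=d_s(u)^{-1/2}\bigl(N_s(u)-N_s(u')\bigr)+\bigl(d_s(u)^{-1/2}-d_s(u')^{-1/2}\bigr)N_s(u')
\]
is exactly the paper's. Your bounds for the first term ($\sqrt2\cdot100.6^{-1/2}\approx0.141$) and for the scalar difference ($\tfrac12\cdot100.6^{-3/2}\cdot 11\approx0.00545$ per unit of $\|u-u'\|_\infty$) are also correct. But the proposal stops at $0.219$ and $0.204$ and then only lists refinements.

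The missing ingredient is a sharp enough bound on $\|N_{\mathsf a}(u')\|$. After the first term the remaining budget is about $0.2-0.141\approx0.059$, so you need $\|N_{\mathsf a}(u')\|\lesssim 0.059/0.00545\approx10.8$. Your estimate $10+\|N_{\mathsf a}(u')-10I_2\|$ cannot reach this, because the off-diagonal entries force $\|N_{\mathsf a}(u')-10I_2\|\ge1$. The fix, which justifies the paper's bound $\|N_{\mathsf a}(u')\|\le10.5$, is to split off the rotation part:
\[
N_{\mathsf a}(u')=\begin{pmatrix}10&-1\\1&10\end{pmatrix}+\begin{pmatrix}-u_2'&u_1'\\0&0\end{pmatrix}.
\]
The first matrix is $\sqrt{101}$ times a rotation, so the scalar and antisymmetric parts combine in quadrature rather than linearly. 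Hence
\[
\|N_{\mathsf a}(u')\|\le\sqrt{101}+\sqrt{0.02^2+0.2^2}<10.3.
\]
With this, your own triangle inequality gives $0.141+10.5\cdot0.00545\approx0.198<0.2$. The case $s=\mathsf b$ is identical.

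\textbf{Your fallback routes do not close the gap as stated.}

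\emph{Box sizes.} Using $|u_2-u_2'|\le1/25$ and $|u_1-u_1'|\le2/5$ cannot improve a Lipschitz constant. For $\|u-u'\|_\infty\le1/25$ both coordinate differences can equal $\|u-u'\|_\infty$, so the $\sqrt2$ in the first term is sharp.

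\emph{Derivatives.} Your estimates $0.05+0.1$ for $\partial_{u_2}M_{\mathsf a}$ and $0.1+0.006$ for $\partial_{u_1}M_{\mathsf a}$ add up to about $0.256>0.2$. This route works only if you use the cancellation in the $(1,1)$ entry of $\partial_{u_2}M_{\mathsf a}=-d^{-1/2}E_{11}+5d^{-3/2}N_{\mathsf a}$, where $E_{11}$ is the $(1,1)$ matrix unit. That entry is about $-0.0995+0.0495\approx-0.05$, so $\|\partial_{u_2}M_{\mathsf a}\|\approx0.05$ and the total is roughly $0.155$.

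\emph{Entrywise bound.} The estimate $\sqrt{\|A\|_1\|A\|_\infty}$ with worst-case entrywise bounds that ignore the same cancellation in $A_{11}$ comes out near $0.205$.

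So the step you deferred as ``with care, below $0.2$'' is precisely what the lemma requires, and none of your estimates as written achieves it.
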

\begin{proof} We state the proof for $s={\mathsf a}$, the case $s=\mathsf b$ is exactly the same since they only differ by signs.

 We have
    $$
    M_{\mathsf a}(u)-M_{\mathsf a}(u')
    =
    \frac{1}{\sqrt{d_{\mathsf a}(u)}}\begin{pmatrix}
       -( u_2-u_2^\prime)& u_1-u_1^\prime\\0&0
    \end{pmatrix}
    +\begin{pmatrix}
        10-u_2^\prime& u_1^\prime-1\\1&10
    \end{pmatrix}
    \left(
    \frac1{\sqrt{d_{\mathsf a}(u)}}-\frac1{\sqrt{d_{\mathsf a}(u')}}
    \right).
    $$
    We note that
    $
    \|\begin{pmatrix}
        10-u_2^\prime& u_1^\prime-1\\1&10
    \end{pmatrix}\|  \le 10.5.
    $ and $d_{\mathsf a}\in [100.6,101.4]$. Moreover, a direct computation shows $|\frac1{\sqrt{d_{\mathsf a}(u)}}-\frac1{\sqrt{d_{\mathsf a}(u')}}|\le \frac{11}{2000}\|u-u^\prime\|_\infty.$
    
    This yields
    $$
    \begin{aligned}
    \|M_{\mathsf a}(u)-M_{\mathsf a}(u')\|
    &\le 
    \frac{\sqrt{2}}{10}\|u-u^\prime\|_\infty
    + 10.5
    \cdot \frac{11} {2000} \cdot\| u- u'\|_\infty \\
    &\le 0.2 \| u- u'\|_\infty
    \end{aligned}
    $$
\end{proof}
\begin{corollary}\label{lem:singMsu}
The singular values of $M_s(u)$ can be uniformly bounded for $s\in \{\mathsf a,\mathsf b\}$ and $u\in U$:
$$ 0.96\le \sigma_i(M_s(u))\le 1.04, \quad i = 1, 2. $$
\end{corollary}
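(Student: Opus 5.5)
The plan is to compare $M_s(u)$ with $M_s(0)$, which turns out to be an exact rotation, and then use the Lipschitz bound of Lemma~\ref{lem:LipMsu} together with Weyl's perturbation inequality for singular values.

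\textbf{The center $u=0$.} Evaluate the explicit formulas~\eqref{eq:explicit-cocycle} at $u=0$. There $d_{\mathsf a}(0)=d_{\mathsf b}(0)=101=10^2+1^2$. Hence
\[
M_{\mathsf a}(0)=\frac{1}{\sqrt{101}}\begin{pmatrix}10&-1\\1&10\end{pmatrix},
\qquad
M_{\mathsf b}(0)=\frac{1}{\sqrt{101}}\begin{pmatrix}10&1\\-1&10\end{pmatrix}.
\]
Both are rotation matrices $R_{\pm\theta_0}$ with $\cos\theta_0=10/\sqrt{101}$. In particular, both singular values of $M_s(0)$ equal $1$.

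\textbf{Perturbation.} Every $u\in U=[-1/5,1/5]\times[-1/50,1/50]$ satisfies $\|u-0\|_\infty\le 1/5$. Lemma~\ref{lem:LipMsu} then gives
\[
\|M_s(u)-M_s(0)\|\le 0.2\cdot 0.2=0.04 .
\]
Weyl's inequality for singular values, $|\sigma_i(A)-\sigma_i(B)|\le\|A-B\|$, therefore yields
\[
\sigma_i(M_s(u))\in[1-0.04,\,1+0.04]=[0.96,1.04], \qquad i=1,2,
\]
which is the claim.

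As a consistency check, $\det M_s(u)=1$ gives $\sigma_1\sigma_2=1$. So the lower bound also follows from the upper one, since $1/1.04>0.96$.

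There is no real obstacle here. The only points to verify are that $M_s(0)$ is exactly orthogonal, which holds because $10^2+1=101$, and that the radius of $U$ in the sup norm is $1/5$.
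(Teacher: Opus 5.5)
Your proposal is correct and follows the paper's proof essentially verbatim. Both arguments note that $M_s(0)$ is orthogonal, apply Lemma~\ref{lem:LipMsu} with $\|u\|_\infty\le 1/5$ to get $\|M_s(u)-M_s(0)\|\le 0.04$, and conclude using the $1$-Lipschitz property of singular values in the operator norm (Weyl's inequality); your explicit check that $10^2+1^2=101$ makes $M_s(0)$ a rotation fills in a step the paper only asserts.
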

\begin{proof}
Note that at $u=(0,0)$, the matrix $M_s(0)$ is orthogonal, hence
$$\sigma_1(M_s(0))=\sigma_2(M_s(0))=1.$$
Since singular values are $1$-Lipschitz with respect to the operator norm,
$$ |\sigma_i(M_s(u))-1| \le \|M_s(u)-M_s(0)\|. $$
Using the Lipschitz bound from Lemma~\ref{lem:LipMsu} and the fact that \(\|u\|_\infty\le 1/5\), we have
$$ \|M_s(u)-M_s(0)\| \le 0.2 / 5=0.04. $$
Hence
$$ 0.96\le \sigma_i(M_s(u))\le 1.04 $$
uniformly.
\end{proof}

Thus the current coordinate $\Phi_s(u)$ identifies the branch, and the forward return sends it to $u$.
This expands the unstable coordinate while contracting the stable coordinate by $\Psi_s$.
The return time is independent of $E$, and $E$ is multiplied on the right by $M_s$.

\subsection{The horseshoe and its symbolic coding}\label{sec:words}

We take the set of bi-infinite sequences with the left shift
\[
\Omega=\{\mathsf a,\mathsf b\}^{\Z},\qquad
(\sigma z)(i)=z(i+1).
\] We now show that the maximal invariant set of $\Phi,\Psi$ is a Smale horseshoe, which has a standard symbolic coding.
\begin{proposition}
\label{prop:suspension-factor}
There exists a continuous map $\Omega\to U\times V; z\mapsto (u(z),v(z))$
whose image we denote by $\mathcal H$,
given by
\[
\begin{aligned}
u(z) &= \lim_{k \to \infty} \Phi_{z(0)} \circ \Phi_{z(1)}\circ \cdots \circ \Phi_{z(k)} (0,0), \\
v(z) &= \lim_{k \to \infty} \Psi_{z(-1)} \circ \Psi_{z(-2)}\circ \cdots \circ \Psi_{z(-k)} (0,0).
\end{aligned}
\]

 They satisfy

\begin{equation*}
u(z)=\Phi_{z(0)}\bigl(u(\sigma z)\bigr),
\qquad
v(\sigma z)=\Psi_{z(0)}\bigl(v(z)\bigr).
\end{equation*}

For $z\in\Omega$ and $E\in\SL_2(\R)$, set $$
C(z,E)=C(u(z),v(z),E),\quad
M(z)=M_{z(0)}\bigl(u(\sigma z)\bigr),
\quad
h(z)=h_{z(0)}\bigl(u(\sigma z),v(z)\bigr).$$
Then
\begin{equation}\label{eq:symbolic-marked-return}
g_{h(z)}C(z,E)\Gamma_{z(0)}
=
C(\sigma z,EM(z)).
\end{equation}

Moreover, for every $N\ge0$,
\begin{equation}\label{eq:word-closeness}
\begin{aligned}
z(i)=z'(i)\quad(0\le i<N)
&\Longrightarrow
\|u(z)-u(z')\|_\infty
\le \frac25\,900^{-N},\\
z(-i)=z'(-i)\quad(1\le i\le N)
&\Longrightarrow
\|v(z)-v(z')\|_\infty
\le \frac25\,900^{-N}.
\end{aligned}
\end{equation}

\end{proposition}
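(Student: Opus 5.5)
The argument is the standard contraction-mapping construction of the horseshoe coding, combined with Proposition~\ref{prop:return} applied at the symbolic points.

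\textbf{Existence of the limits and the closeness estimates.} Set $u_k(z)=\Phi_{z(0)}\circ\cdots\circ\Phi_{z(k)}(0,0)$. Since $(0,0)\in U$ and each $\Phi_s$ maps $U$ into $U$, every $u_k(z)$ lies in $U$. Write $u_{k+1}(z)=\Phi_{z(0)}\circ\cdots\circ\Phi_{z(k)}(w)$ with $w=\Phi_{z(k+1)}(0,0)\in U$. Because $\operatorname{diam}_\infty U=2/5$, Lemma~\ref{lem:branches} gives
\[
\|u_{k+1}(z)-u_k(z)\|_\infty\le \tfrac25\,900^{-(k+1)}.
\]
So $(u_k(z))$ is uniformly Cauchy, and the limit $u(z)\in U$ exists because $U$ is compact. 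The same argument applies to $v$, using $\Psi$ and $V$.

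For \eqref{eq:word-closeness}, suppose $z(i)=z'(i)$ for $0\le i<N$. Then $u(z)=\Phi_{z(0)}\circ\cdots\circ\Phi_{z(N-1)}(u(\sigma^Nz))$, and likewise for $z'$. The two points $u(\sigma^N z)$ and $u(\sigma^N z')$ lie in $U$, so they are at distance at most $2/5$. Applying $N$ contractions with constant $900^{-1}$ gives the bound $\frac25 900^{-N}$. The identity $u(z)=\Phi_{z(0)}(u(\sigma^Nz))$ composed form used here follows from continuity of $\Phi_s$: pass to the limit in $u_k(z)=\Phi_{z(0)}(u_{k-1}(\sigma z))$. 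The $v$ case is the same, using the coordinates $z(-1),\dots,z(-N)$.

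Continuity of $z\mapsto(u(z),v(z))$ in the product topology follows directly from \eqref{eq:word-closeness}. If $z,z'$ agree on $[-N,N)$, the images are $\frac25 900^{-N}$-close.

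\textbf{The equivariance relations.} The relation $u(z)=\Phi_{z(0)}(u(\sigma z))$ is the limit identity above. For $v$, note that $(\sigma z)(-i)=z(-i+1)$, so
\[
v(\sigma z)=\lim_k \Psi_{z(0)}\circ\Psi_{z(-1)}\circ\cdots\circ\Psi_{z(-k+1)}(0,0)=\Psi_{z(0)}(v(z)),
\]
again by continuity of $\Psi_{z(0)}$.

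\textbf{The marked return \eqref{eq:symbolic-marked-return}.} Apply Proposition~\ref{prop:return} with $s=z(0)$, $u=u(\sigma z)\in U$, $v=v(z)\in V$, and $E$ arbitrary. It yields
\[
g_{h_s(u,v)}C(\Phi_s(u),v,E)\Gamma_s=C(u,\Psi_s(v),EM_s(u)).
\]
By the equivariance relations, $\Phi_s(u)=u(z)$ and $\Psi_s(v)=v(\sigma z)$. So the left side is $g_{h(z)}C(z,E)\Gamma_{z(0)}$ and the right side is $C(\sigma z,EM(z))$, by the definitions of $h$ and $M$.

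There is no real obstacle. The only point needing care is the index bookkeeping for $v$ under the shift, so that $v$ depends on the past and $u$ on the present and future, matching the arguments that Proposition~\ref{prop:return} requires.
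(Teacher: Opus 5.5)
Your proof is correct. It differs from the paper's mainly in how the coding map is produced.

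The paper declares that $\Phi_{\mathsf a},\Phi_{\mathsf b},\Psi_{\mathsf a},\Psi_{\mathsf b}$ define a Smale horseshoe. It then invokes the symbolic coding theorem of \cite[\S2.5]{KatokHasselblatt} as a black box, which yields a conjugacy $z\mapsto(u(z),v(z))$ together with the two equivariance relations. You instead take the displayed limits as the definition. You prove convergence from the $900^{-1}$ contraction in Lemma~\ref{lem:branches}, and you get equivariance by passing to the limit in $u_k(z)=\Phi_{z(0)}(u_{k-1}(\sigma z))$ and in its $\Psi$-analogue with the shifted past. The remaining two steps are the same in both arguments: applying Proposition~\ref{prop:return} at $u=u(\sigma z)$, $v=v(z)$, and bounding the diameter of $\Phi_{z(0)}\circ\cdots\circ\Phi_{z(N-1)}(U)$.

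What your route buys:
\begin{itemize}
\item It is self-contained.
\item It actually verifies that the coding map is given by the limit formulas in the statement, a point the paper leaves implicit.
\item It sidesteps whether this product of two contracting systems on $U$ and $V$ literally fits the hypotheses of the textbook horseshoe theorem.
\end{itemize}

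What the citation buys is brevity, plus the extra information that the coding is a homeomorphism onto the maximal invariant set $\mathcal H$. That is not part of the statement. If you want it, injectivity follows in one line from your setup: the images $\Phi_{\mathsf a}(U)$ and $\Phi_{\mathsf b}(U)$ are disjoint (likewise $\Psi_{\mathsf a}(V)$ and $\Psi_{\mathsf b}(V)$), and the branches are injective, so distinct futures, respectively pasts, give distinct points.
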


\begin{proof}
By Lemma~\ref{lem:branches}, the two return branches define a Smale horseshoe. By the symbolic coding theorem for horseshoes \cite[\S2.5]{KatokHasselblatt}, its maximal invariant set is topologically conjugate to the full two-sided shift $(\Omega,\sigma)$.  Denote the conjugacy by $
z\longmapsto (u(z),v(z)).$
Then
\[
u(z)=\Phi_{z(0)}(u(\sigma z)),
\qquad
v(\sigma z)=\Psi_{z(0)}(v(z)).
\]

Proposition~\ref{prop:return}, applied with $u=u(\sigma z)$ and $v=v(z)$, gives \eqref{eq:symbolic-marked-return}.  

If $z$ and $z'$ agree in positions $0,\ldots,N-1$, their unstable coordinates lie in
\[
\Phi_{z(0)}\circ\cdots\circ\Phi_{z(N-1)}(U),
\]
whose $\|\cdot\|_\infty$-diameter is at most $\frac25\,900^{-N}. $
The stable estimate follows in the same way from the common past.  This proves \eqref{eq:word-closeness}.

\end{proof}

As a direct application, define for any $z\in \Omega$,
\[
M^{(0)}(z)=I_2,\qquad T_0(z)=0,
\]
and, for $j\in\Z$,
\begin{equation*}
M^{(j+1)}(z)=M^{(j)}(z)M(\sigma^jz),
\qquad
T_{j+1}(z)-T_j(z)=h(\sigma^jz).
\end{equation*}

Then by inductively applying Equation \ref{eq:symbolic-marked-return}, for every $j\in\Z$ and
$0\le r\le h(\sigma^jz)$, we have
\begin{equation}\label{eq:factor-map}
g_{T_j(z)+r}C(z,E)\Z^3
=
g_rC\bigl(\sigma^jz,EM^{(j)}(z)\bigr)\Z^3.
\end{equation}
The intervals in \eqref{eq:factor-map} cover the whole flow orbit.

\subsection{Finite words and a uniform fiber estimate}\label{sec:finite-words}

If two words agree on a long sequence of symbols, it means their base coordinates are close, but a separate estimate is needed for products of fiber matrices.

By definition, $M(z)$ and $M^{(\ell)}(z)$, for $\ell\ge0$, depend only on the future $z[0,\infty)$.
For $k,\ell\in\N_0$, the cocycle identity is
\begin{equation*}
M^{(k+\ell)}(z)=M^{(k)}(z)M^{(\ell)}(\sigma^kz).
\end{equation*} 

We have the following bound on the cocycle, when the base coordinates are close.

\begin{lemma}\label{lem:fiber-bound}
For every $\ell,N\in\N_0$ and every $z,z'\in\Omega$ with
$z(i)=z'(i)$ for $0\le i<\ell+N$,
\begin{equation}\label{eq:uniform-fiber-bound}
\bigl\|M^{(\ell)}(z)^{-1}M^{(\ell)}(z')-I_2\bigr\|
\le 600^{-N}.
\end{equation}
\end{lemma}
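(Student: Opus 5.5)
We will prove \eqref{eq:uniform-fiber-bound} by a telescoping argument that combines three earlier estimates: the Lipschitz bound of Lemma~\ref{lem:LipMsu}, the singular value bounds of Corollary~\ref{lem:singMsu}, and the word-closeness estimate \eqref{eq:word-closeness}. Write $A_j=M(\sigma^jz)=M_{z(j)}(u(\sigma^{j+1}z))$ and $A_j'=M(\sigma^jz')$. For $0\le j<\ell$ we have $z(j)=z'(j)$, so both factors use the same branch $s=z(j)$. Moreover, $\sigma^{j+1}z$ and $\sigma^{j+1}z'$ agree in positions $0,\dots,\ell+N-j-2$, that is, on $\ell+N-j-1$ symbols. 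By \eqref{eq:word-closeness} and Lemma~\ref{lem:LipMsu},
\[
\|A_j-A_j'\|\le 0.2\cdot\tfrac25\,900^{-(\ell+N-j-1)}=0.08\cdot 900^{-(\ell+N-j-1)}.
\]
The case $\ell=0$ is trivial, since then $M^{(0)}=I_2$ on both sides. So assume $\ell\ge1$.

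We then telescope the product:
\[
M^{(\ell)}(z)^{-1}M^{(\ell)}(z')-I_2=\sum_{j=0}^{\ell-1}A_{\ell-1}^{-1}\cdots A_{j}^{-1}\,(A_j'-A_j)\,A_{j+1}'\cdots A_{\ell-1}'.
\]
Each term is obtained by replacing one factor at a time, inside the expression $A_{\ell-1}^{-1}\cdots A_0^{-1}A_0'\cdots A_{\ell-1}'$. The naive way to bound a term uses Corollary~\ref{lem:singMsu}, which gives $\|A_i^{-1}\|,\|A_i'\|\le 1.04$ (since $\sigma_2\ge 0.96$ gives $\|A^{-1}\|\le 1/0.96<1.042$). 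This yields the bound
\[
\bigl(1.042^2\bigr)^{\ell-1-j}\cdot 1.042\cdot 0.08\cdot 900^{-(\ell-1-j)}\,900^{-N}
\]
for the $j$-th term, up to how the exponents are bookkept. Since the decay factor $900$ dominates the growth factor $1.042^2\approx1.086$, summing the geometric series in $k=\ell-1-j$ gives a total of at most $0.09\cdot 900^{-N}\cdot\sum_k(1.09/900)^k\le 0.1\cdot 900^{-N}$. This is well below $600^{-N}$ for every $N\ge0$.

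The main point to verify is the exponent bookkeeping in the telescoping sum. The product $A_{\ell-1}^{-1}\cdots A_j^{-1}$ has $\ell-j$ factors, and $A_{j+1}'\cdots A_{\ell-1}'$ has $\ell-1-j$ factors. Each term is therefore bounded by $1.042^{2(\ell-1-j)+1}\,\|A_j-A_j'\|$, and the geometric ratio is $1.042^2/900$. Because that ratio is so small, there is no real difficulty: the error is dominated by the last term $j=\ell-1$, and the comparison of $900^{-N}$ with $600^{-N}$ leaves ample room, even at $N=0$ where the bound is about $0.09<1$. If one wishes to avoid operator-norm growth altogether, one can instead group the terms as $P_j^{-1}(A_j^{-1}A_j'-I)P_j$ with $P_j$ a partial product. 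However, the crude bound above already suffices, so I would present that.
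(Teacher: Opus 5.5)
Your proposal is correct and is essentially the paper's own proof. It uses the same telescoping identity, with each term bounded via Lemma~\ref{lem:LipMsu}, Corollary~\ref{lem:singMsu} and \eqref{eq:word-closeness}, the same exponent bookkeeping ($2(\ell-j)-1$ for the operator-norm growth and $\ell+N-j-1$ for the matching symbols), and a geometric sum; the only difference is cosmetic, namely your constant $1.042$ in place of the paper's cruder $1.1$.
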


\begin{proof}
The case $\ell=0$ is immediate. Assume $\ell\ge1$ and set
\[
u_j=u(\sigma^jz),\qquad u_j'=u(\sigma^jz')
\qquad (0\le j\le\ell).
\]
For $0\le j<\ell$, the sequences $\sigma^{j+1}z$ and
$\sigma^{j+1}z'$ agree in their first $\ell+N-j-1$ positions.
Thus~\eqref{eq:word-closeness} gives
\[
\|u_{j+1}-u_{j+1}'\|_\infty
\le \frac25\,900^{-(\ell+N-j-1)}.
\]
By definition,
\[
\begin{aligned}
M^{(\ell)}(z)
&=M_{z(0)}(u_1)\cdots M_{z(\ell-1)}(u_\ell),\\
M^{(\ell)}(z')
&=M_{z(0)}(u_1')\cdots M_{z(\ell-1)}(u_\ell').
\end{aligned}
\]
Lemma~\ref{lem:LipMsu} gives
\[
\|M_{z(j)}(u_{j+1}')-M_{z(j)}(u_{j+1})\|
\le 0.2\,\|u_{j+1}'-u_{j+1}\|_\infty,
\]
while Corollary~\ref{lem:singMsu} gives
\[
\|M_s(u)\|\le1.1,\qquad
\|M_s(u)^{-1}\|\le1.1
\qquad \forall s\in\{\mathsf a,\mathsf b\},\ u\in U.
\]

We use the telescoping sum
\[
\begin{aligned}
&M^{(\ell)}(z)^{-1}M^{(\ell)}(z')-I_2\\
&=\sum_{j=0}^{\ell-1}
\left(\prod_{k=j}^{\ell-1}M_{z(k)}(u_{k+1})\right)^{-1}
\bigl(M_{z(j)}(u_{j+1}')-M_{z(j)}(u_{j+1})\bigr)
\left(\prod_{k=j+1}^{\ell-1}M_{z(k)}(u_{k+1}')\right),
\end{aligned}
\]
where products are ordered from left to right in increasing index,
and an empty product is $I_2$.
By submultiplicativity of the operator norm,
\[
\begin{aligned}
\bigl\|M^{(\ell)}(z)^{-1}M^{(\ell)}(z')-I_2\bigr\|
&\le
\sum_{j=0}^{\ell-1}
1.1^{\,2(\ell-j)-1}
\|M_{z(j)}(u_{j+1}')-M_{z(j)}(u_{j+1})\|\\
&\le
0.2\cdot0.4\cdot
\sum_{j=0}^{\ell-1}
1.1^{\,2(\ell-j)-1}
900^{-(\ell+N-j-1)}\\
&=
0.08 \cdot1.1\cdot900^{-N}
\sum_{r=0}^{\ell-1}
\left(\frac{1.1^2}{900}\right)^r\\
&<600^{-N}.
\end{aligned}
\]
This proves~\eqref{eq:uniform-fiber-bound}.
\end{proof}

\section{The long periodic word \texorpdfstring{$\mathsf a^n\mathsf b$}{a-to-the-n b}}\label{sec:periodic}

We now consider the orbit in $X_3$ corresponding to a periodic word $\mathsf a^n\mathsf b$. The return is automatically periodic in the base, while its fiber coordinate is multiplied by a matrix $P_n$.
We compare this matrix with the constant $\mathsf a$-return, which is an irrational elliptic rotation in suitable coordinates.
The long block $\mathsf a^n$ accumulates the angle $n\omega_{\mathsf a}$, and the single $\mathsf b$, together with the change in the preceding fiber product, contributes a fixed defect matrix $B$.

\subsection{The periodic word and its period matrix}

For $n\ge1$, define $z_n\in\Omega$ by
\begin{equation*}
z_n(i)=
\begin{cases}
\mathsf b,&i\equiv n\pmod{n+1},\\
\mathsf a,&\text{otherwise}.
\end{cases}
\end{equation*}
Thus $z_n(0)=\cdots=z_n(n-1)=\mathsf a$, $z_n(n)=\mathsf b$, and $\sigma^{n+1}z_n=z_n$.

The change in the fiber and time spent for an $\mathsf a^n\mathsf b$ cycle are given by
\begin{equation*}
P_n=M^{(n+1)}(z_n),\qquad \tau_n=T_{n+1}(z_n).
\end{equation*}

We have the following description of the entire orbit.

\begin{proposition}\label{prop:complete-orbit}
For $E\in\SL_2(\R)$, integers $k\in\Z$ and $0\le j\le n$, and
$0\le r\le h(\sigma^jz_n)$,
\begin{equation}\label{eq:complete-orbit}
g_{k\tau_n+T_j(z_n)+r}C(z_n,E)\Z^3
=g_rC(\sigma^jz_n,EP_n^kM^{(j)}(z_n))\Z^3.
\end{equation}
\end{proposition}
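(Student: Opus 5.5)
The plan is to reduce \eqref{eq:complete-orbit} to the general orbit identity \eqref{eq:factor-map}, using the periodicity $\sigma^{n+1}z_n=z_n$ to rewrite both the accumulated time and the accumulated fiber matrix. Set $m=k(n+1)+j$. Then \eqref{eq:factor-map}, applied with index $m\in\Z$ and $0\le r\le h(\sigma^m z_n)=h(\sigma^j z_n)$, gives
\[
g_{T_m(z_n)+r}C(z_n,E)\Z^3=g_rC\bigl(\sigma^m z_n,EM^{(m)}(z_n)\bigr)\Z^3 .
\]
Since $\sigma^m z_n=\sigma^j z_n$, it remains to prove two identities:
\[
T_{k(n+1)+j}(z_n)=k\tau_n+T_j(z_n),
\qquad
M^{(k(n+1)+j)}(z_n)=P_n^kM^{(j)}(z_n).
\]

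Both identities follow from the recursions and periodicity. Because $M(\sigma^i z_n)$ and $h(\sigma^i z_n)$ depend only on $\sigma^i z_n$, they are $(n+1)$-periodic in $i$.

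For the times, the increments $T_{i+1}-T_i=h(\sigma^i z_n)$ are $(n+1)$-periodic. Hence $T_{i+n+1}-T_i$ is constant in $i$, and it equals $T_{n+1}-T_0=\tau_n$ at $i=0$. Induction in $k$, upward and downward, gives the first identity for all $k\in\Z$.

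For the matrices, the recursion $M^{(i+1)}=M^{(i)}M(\sigma^iz_n)$ holds for all $i\in\Z$, so $M^{(i)}$ is determined for negative $i$ by $M^{(i)}=M^{(i+1)}M(\sigma^iz_n)^{-1}$. I would show by induction on $i$, in both directions from $i=0$, that $M^{(i+n+1)}=P_nM^{(i)}$.
\begin{itemize}
\item The base case $i=0$ is the definition of $P_n$.
\item Forward step: $M^{(i+n+2)}=M^{(i+n+1)}M(\sigma^{i+n+1}z_n)=P_nM^{(i)}M(\sigma^iz_n)=P_nM^{(i+1)}$.
\item The backward step is the same computation with inverses.
\end{itemize}
Iterating then gives $M^{(k(n+1)+j)}=P_n^kM^{(j)}$ for all $k\in\Z$. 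The $P_n$ multiplies on the left because the cocycle is built by right multiplication, matching the order in \eqref{eq:complete-orbit}.

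The only point needing care is the bookkeeping for negative $k$. This requires confirming that \eqref{eq:factor-map} was established for all $j\in\Z$, including negative indices with the backward recursion. That confirmation is routine, via the inverse form of \eqref{eq:symbolic-marked-return}: $C(z,E)\Z^3=g_{-h(z)}C(\sigma z,EM(z))\Z^3$.

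Finally, the intervals $[k\tau_n+T_j,\,k\tau_n+T_{j+1}]$, for $k\in\Z$ and $0\le j\le n$, tile $\R$, since $h>1$. So the formula covers the whole orbit.
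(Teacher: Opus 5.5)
Your proof is correct and is essentially the paper's argument. The paper also derives $M^{(k(n+1)+j)}(z_n)=P_n^kM^{(j)}(z_n)$ and $T_{k(n+1)+j}(z_n)=k\tau_n+T_j(z_n)$ from periodicity and then applies the orbit identity \eqref{eq:factor-map}; your two-sided induction and your check of \eqref{eq:factor-map} at negative indices fill in details the paper leaves implicit.
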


\begin{proof}
Periodicity gives
$M^{(k(n+1)+j)}(z_n)=P_n^kM^{(j)}(z_n)$ and
$T_{k(n+1)+j}(z_n)=k\tau_n+T_j(z_n)$.
Apply Proposition~\ref{prop:suspension-factor}.
\end{proof}

Thus, if $P_n$ is elliptic, then all its powers are bounded, and this gives a bounded orbit in $X_3$. We next give an asymptotic description of $P_n$ and the intermediate matrices $M^{(j)}(z_n)$.

\subsection{The constant return and the single defect}

Let $z_{\mathsf a},z_\delta\in\Omega$ be the constant word and the single-defect word:
\begin{equation}\label{eq:zminuszdelta}
z_{\mathsf a}(i)=\mathsf a,\quad \forall i\in\Z;\qquad
z_\delta(i)=
\begin{cases}\mathsf b,&i=0,\\\mathsf a,&i\ne0.\end{cases}
\end{equation}
We shall show that the matrix cocycle in the $\mathsf a^n$-part of the $z_n$ can be approximated by $M(z_{\mathsf a})$, and the matrix for the $\mathsf b$ word is related to $M(z_\delta)$.

Before that, we first show that $M(z_{\mathsf a})$ is elliptic.

\begin{lemma}\label{lem:constant-branch}
Set $\alpha=e^{2h(z_{\mathsf a})}$.
Then $100<\alpha<101$ and
\begin{equation}\label{eq:Ma-explicit}
M(z_{\mathsf a})=\frac1{\sqrt\alpha}
\begin{pmatrix}
10-\alpha^{-1}&-1+10\alpha^{-1}-\alpha^{-2}\\
1&10
\end{pmatrix}.
\end{equation}
This matrix is elliptic.
There exist $G\in\SL_2(\R)$ and $\omega_{\mathsf a}\in(0,\pi)$, with $\omega_{\mathsf a}/\pi\notin\Q$, such that
\begin{equation}\label{eq:Ma-fixed-u}
M(z_{\mathsf a})=G R_{\omega_{\mathsf a}}G^{-1}.
\end{equation}
Note that here the matrix $G$ is only unique up to a rotation, while the operator norm of $G$ is unique, and a direct computation shows $\|G\| < 1.03$. 
\end{lemma}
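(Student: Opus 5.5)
The plan is to compute everything at the fixed point of the $\mathsf a$-branch and then reduce both ellipticity and irrationality to the cubic equation satisfied by $\alpha$.

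\textbf{Step 1: the fixed points and $\alpha$.} Since $\sigma z_{\mathsf a}=z_{\mathsf a}$, the relations $u(z)=\Phi_{z(0)}(u(\sigma z))$ and $v(\sigma z)=\Psi_{z(0)}(v(z))$ of Proposition~\ref{prop:suspension-factor} give $u^*:=u(z_{\mathsf a})=\Phi_{\mathsf a}(u^*)$ and $v^*:=v(z_{\mathsf a})=\Psi_{\mathsf a}(v^*)$. These are the unique fixed points of the contractions in Lemma~\ref{lem:branches}.

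Substituting into \eqref{eq:lambda}, the factor $\bigl((1+v^*\Phi_{\mathsf a}(u^*))/(1+\Psi_{\mathsf a}(v^*)u^*)\bigr)^{2/3}$ equals $1$. Hence $\alpha=e^{2h(z_{\mathsf a})}=\lambda_{\mathsf a}(u^*,v^*)=d_{\mathsf a}(u^*)$.

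Write $d=d_{\mathsf a}(u^*)$. The fixed-point equation from \eqref{eq:PhiPsi} reads $u_2^*=1/d$ and $u_1^*=(10-u_2^*)/d$. So $u^*=(10\alpha^{-1}-\alpha^{-2},\,\alpha^{-1})^T$.

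Plugging $u^*$ into $M_{\mathsf a}$ from \eqref{eq:explicit-cocycle} gives exactly \eqref{eq:Ma-explicit}, since $u_1^*-1=-1+10\alpha^{-1}-\alpha^{-2}$.

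The consistency condition $\alpha=101-u_1^*-10u_2^*=101-20\alpha^{-1}+\alpha^{-2}$ becomes
\[
\alpha^3-101\alpha^2+20\alpha-1=0.
\]
A sign check at $100$ and $101$ places a root in $(100,101)$. Uniqueness of the fixed point, together with $d_{\mathsf a}\in[100.6,101.4]$, identifies $\alpha$ with that root.

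\textbf{Step 2: ellipticity and $G$.} The determinant of the matrix in \eqref{eq:Ma-explicit} is $(101-20\alpha^{-1}+\alpha^{-2})/\alpha=1$ by the cubic. The trace is $t=(20-\alpha^{-1})/\sqrt\alpha$, which lies in $(0,2)$ because $\sqrt\alpha>10$. So $M(z_{\mathsf a})$ is elliptic, with $\omega_{\mathsf a}=\arccos(t/2)\in(0,\pi/2)$.

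Any elliptic $A\in\SL_2(\R)$ is conjugate to $R_\omega$, where the sign of $\omega$ is fixed by the sign of the lower-left entry. Concretely, take $G$ sending the standard basis to $(\operatorname{Re}w,\operatorname{Im}w)$ for a suitably normalized eigenvector $w$, then fix the orientation so that $\omega_{\mathsf a}\in(0,\pi)$. Since $G$ is determined up to right multiplication by rotations, $\|G\|$ is well defined. The bound $\|G\|<1.03$ is a direct numerical computation from the explicit entries, using $\alpha\approx100.8$.

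\textbf{Step 3: irrationality of $\omega_{\mathsf a}/\pi$ (the main obstacle).} Suppose $\omega_{\mathsf a}/\pi\in\Q$. Then $\gamma:=t^2-2=2\cos 2\omega_{\mathsf a}$ is twice the real part of a root of unity. Hence $\gamma$ is an algebraic integer all of whose Galois conjugates are real and lie in $[-2,2]$.

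Put $\beta=\alpha^{-1}$. Then $\beta$ is a root of $x^3-20x^2+101x-1$, which is irreducible over $\Q$ since $\pm1$ are not roots. Also $\gamma=(20-\beta)^2\beta-2\in\Q(\beta)$, so its conjugates are $(20-\beta')^2\beta'-2$ for the conjugates $\beta'$ of $\beta$.

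The other two roots $\beta'$ satisfy $\beta'+\beta''=20-\beta\approx19.99$ and $\beta'\beta''=1/\beta\approx100.8$. The discriminant $(19.99)^2-4(100.8)$ is negative, so $\beta',\beta''$ are complex conjugates with $|\beta'|\approx10$ and $\operatorname{Re}\beta'\approx10$. Then $|20-\beta'|\approx10$, so the conjugate $(20-\beta')^2\beta'-2$ has modulus roughly $1000>2$. This contradicts the constraint on conjugates of $2\cos(2\pi r)$.

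The care needed here is in making these estimates rigorous with explicit error margins. I expect that to be routine once the bounds $100<\alpha<101$ from Step 1 are in hand.
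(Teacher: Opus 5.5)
Your proposal is correct. Steps 1--2 match the paper's computation. The paper gets $100<\alpha<101$ a bit more directly, from $\alpha=d_{\mathsf a}(\bar u)\ge 100.6$ and positivity of the coordinates of $\bar u$. Your sign-change-plus-uniqueness argument also works, because any root of the cubic in $(100,101)$ gives a fixed point of $\Phi_{\mathsf a}$ in $U$.

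The genuine difference is the irrationality step, Step 3.

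\textbf{The paper's route.} It never uses the explicit trace. It starts from the marked-return identity $C(z_{\mathsf a},I_2)\Gamma_{\mathsf a}C(z_{\mathsf a},I_2)^{-1}=\operatorname{diag}(\alpha,\alpha^{-1/2}M(z_{\mathsf a}))$. If $M(z_{\mathsf a})^m=I_2$, the complex eigenvalues of $\Gamma_{\mathsf a}$ would satisfy $\mu^m=\bar\mu^m=\alpha^{-m/2}$. A $3$-cycle in $\operatorname{Gal}(f/\Q)\cong S_3$ then contradicts $|\mu|<1<\alpha$.

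\textbf{Your route.} You work inside the cubic field $\Q(\beta)$, $\beta=\alpha^{-1}$. You note that $2\cos 2\omega_{\mathsf a}=t^2-2=(20-\beta)^2\beta-2\in\Q(\beta)$. You then use that $2\cos(2\pi r)$ has all conjugates in $[-2,2]$, whereas the embedding $\beta\mapsto\beta'$ gives a number of modulus about $1000$. This needs only irreducibility of the cubic, with no discriminant or Galois-group computation.

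The estimate you flag as the remaining care is easy to make exact:
\begin{itemize}
\item $|\beta'|^2=\beta'\beta''=\alpha>100$;
\item $20-\beta'=\beta+\beta''$, so $|20-\beta'|>10-\beta>9.99$;
\item hence $|(20-\beta')^2\beta'-2|>900$.
\end{itemize}

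\textbf{What each route buys.} The paper's argument depends only on $M(z_{\mathsf a})$ being the fiber part of an integral unimodular matrix with one expanding real eigenvalue, not on a closed form. That is why it transfers verbatim to $P_n$ and $\Gamma_{\mathsf a}^n\Gamma_{\mathsf b}$ in Proposition~\ref{prop:admissible}, where no explicit trace is available. Your argument is tied to the explicit formula for $M(z_{\mathsf a})$.
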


\begin{proof}
For this calculation, write $\bar u=u(z_{\mathsf a})$ and $\bar v=v(z_{\mathsf a})$, the fixed points of $\Phi_{\mathsf a}$ and $\Psi_{\mathsf a}$.
Equation~\eqref{eq:lambda} gives $\alpha=d_{\mathsf a}(\bar u)=101+(-1,-10)\bar u$.
The fixed-point equation for~\eqref{eq:PhiPsi} gives
\begin{equation}\label{eq:fixed-base}
\bar u=(10\alpha^{-1}-\alpha^{-2},\alpha^{-1})^T,
\qquad
\bar v=(-\alpha^{-1},-10\alpha^{-1}+\alpha^{-2}),
\end{equation}
and
\begin{equation}\label{eq:minpoly}
\alpha^3-101\alpha^2+20\alpha-1=0.
\end{equation}
The rectangle bound gives $\alpha>100$, and the positive coordinates of $\bar u$ give $\alpha=101-\bar u_1-10\bar u_2<101$.
Substituting~\eqref{eq:fixed-base} into~\eqref{eq:explicit-cocycle} proves~\eqref{eq:Ma-explicit}. Moreover,
\[
0<\tr M(z_{\mathsf a})=\frac{20}{\sqrt\alpha}-\frac1{\alpha^{3/2}}<2.
\]
The matrix is therefore elliptic.
Its lower-left entry is positive, so the $\SL_2(\R)$ conjugacy can be chosen with rotation angle $\omega_{\mathsf a}\in(0,\pi)$.

At the fixed base coordinates, the return time is $\tfrac12\log\alpha$.
The marked return formula gives
\begin{equation}\label{eq:gammaconj}
C(z_{\mathsf a},I_2)\Gamma_{\mathsf a}C(z_{\mathsf a},I_2)^{-1}
=\operatorname{diag}\bigl(\alpha,\alpha^{-1/2}M(z_{\mathsf a})\bigr).
\end{equation}
The characteristic polynomial of $\Gamma_{\mathsf a}$ is $f(X)=X^3-101X^2+20X-1$, which has one real root $\alpha$ and two conjugate complex roots $\mu, \bar{\mu}$, and is irreducible over $\Q$ because its only possible rational roots are $\pm1$. Moreover, its discriminant $\Delta(f) = -36471<0$, which is negative. This implies
\begin{equation}\label{eq:galgrp}
\operatorname{Gal}(f/\Q) = S_3.
\end{equation}
If $\omega_{\mathsf a}/\pi$ were rational, then there would exist an integer $m>0$ such that $(M(z_{\mathsf a}))^m = GR_{m\omega_{\mathsf a}}G^{-1} = I_2$ and by taking the $m$-th power of \eqref{eq:gammaconj},
\begin{equation}\label{eq:weird}
C(z_{\mathsf a},I_2)\Gamma_{\mathsf a}^mC(z_{\mathsf a},I_2)^{-1}
=\operatorname{diag}\bigl(\alpha^m,\alpha^{-m/2},\alpha^{-m/2}\bigr).
\end{equation}
The eigenvalues of \eqref{eq:weird} satisfy 
\[
\{ \alpha^m, \mu^m, \bar{\mu}^m\} = \{\alpha^m,\alpha^{-m/2},\alpha^{-m/2}\}.
\]
The pair of complex roots $\mu,\bar\mu$ would satisfy $\mu^m=\bar\mu^m = \alpha^{-m/2}$. The automorphism given by $\alpha \mapsto \mu, \ \mu \mapsto \bar{\mu}$ from \eqref{eq:galgrp} produces the relation $\mu^{2m}=\bar\mu^{2m} = \alpha^{-m} = \alpha^{2m}$, contradicting $\alpha>100$.
\end{proof}

We fix $G$ and $\omega_{\mathsf a}$ as in~\eqref{eq:Ma-fixed-u} and $z_\delta\in\Omega$ as in~\eqref{eq:zminuszdelta}.

Define
\begin{equation}\label{eq:defect-matrix}
B=\lim_{r\to\infty}
R_{-r\omega_{\mathsf a}}G^{-1}M^{(r+1)}(\sigma^{-r}z_\delta)G.
\end{equation}
The following calculation verifies that this limit exists in $\SL_2(\R)$, with error $O(400^{-r})$.

\begin{lemma}\label{lem:nonorthogonal-defect}
The limit in Equation~\eqref{eq:defect-matrix} is well-defined, and $B\notin \SO(2)$.
\end{lemma}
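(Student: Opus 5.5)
The plan is to prove convergence by a multiplicative telescoping argument, and then show $B\notin\SO(2)$ by a perturbative comparison with the first term of the sequence.

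\textbf{Convergence.} For $r\ge0$ put
\[
X_r=R_{-r\omega_{\mathsf a}}G^{-1}M^{(r+1)}(\sigma^{-r}z_\delta)G .
\]
The cocycle identity gives
\[
M^{(r+2)}(\sigma^{-r-1}z_\delta)=M(\sigma^{-r-1}z_\delta)\,M^{(r+1)}(\sigma^{-r}z_\delta).
\]
Here $M(\sigma^{-r-1}z_\delta)=M_{\mathsf a}\bigl(u(\sigma^{-r}z_\delta)\bigr)$. The futures of $\sigma^{-r}z_\delta$ and $z_{\mathsf a}$ agree in positions $0,\dots,r-1$, so \eqref{eq:word-closeness} gives
\[
\|u(\sigma^{-r}z_\delta)-\bar u\|_\infty\le \tfrac25\,900^{-r}.
\]
Lemma~\ref{lem:LipMsu} and $\|G\|,\|G^{-1}\|<1.03$ then give
\[
G^{-1}M(\sigma^{-r-1}z_\delta)G=R_{\omega_{\mathsf a}}+\Delta_r,\qquad \|\Delta_r\|\le 0.09\cdot 900^{-r}.
\]
Substituting,
\[
X_{r+1}=\bigl(I_2+R_{-(r+1)\omega_{\mathsf a}}\Delta_rR_{r\omega_{\mathsf a}}\bigr)X_r .
\]
Since rotations are isometries, the factors are $I_2+O(900^{-r})$. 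The infinite product therefore converges, $\sup_r\|X_r\|$ is bounded, and $\|X_r-B\|=O(900^{-r})$, in particular $O(400^{-r})$. Each $X_r$ has determinant $1$, so $B\in\SL_2(\R)$.

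\textbf{Non-orthogonality.} The bound above also yields $\|B-X_0\|\le c$ with explicit $c\approx 0.1/900$, using $\|X_0\|\le 1.04\cdot1.03^2$. Here
\[
X_0=G^{-1}M_{\mathsf b}(\bar u)G,
\]
since $u(\sigma z_\delta)=u(z_{\mathsf a})=\bar u$. Instead of computing $G$, I will use an intrinsic criterion. Put $Q=(GG^T)^{-1}$, the quadratic form preserved by $M(z_{\mathsf a})$. Then $X_0\in\SO(2)$ if and only if $M_{\mathsf b}(\bar u)^TQM_{\mathsf b}(\bar u)=Q$.

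For an elliptic $\left(\begin{smallmatrix}p&q\\ r&s\end{smallmatrix}\right)$ with $r>0$, the invariant form is proportional to $\left(\begin{smallmatrix}r&(s-p)/2\\(s-p)/2&-q\end{smallmatrix}\right)$. Applying this to \eqref{eq:Ma-explicit} gives, up to scaling,
\[
Q\propto\begin{pmatrix}1&\alpha^{-1}/2\\ \alpha^{-1}/2&1-10\alpha^{-1}+\alpha^{-2}\end{pmatrix},
\]
which is anisotropic with eigenvalue ratio about $0.9$. Meanwhile $M_{\mathsf b}(\bar u)$, written out explicitly from \eqref{eq:explicit-cocycle} and \eqref{eq:fixed-base}, is close to a rotation by an angle of about $-0.1$. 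A rotation by an angle not in $\{0,\pi\}$ cannot preserve an anisotropic form, and I expect the defect to be quantitative: $\bigl|\sigma_1(X_0)-1\bigr|$ should be of order $(1-0.9)\cdot|\sin 0.2|\approx10^{-2}$.

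\textbf{Main obstacle.} The delicate step is making this margin rigorous and showing it dominates $c$. Since singular values are $1$-Lipschitz in the operator norm, $\sigma_1(B)\ge\sigma_1(X_0)-c>1$ would follow, so $B\notin\SO(2)$. I would first try to avoid $G$ entirely via the conjugation-invariant quantity
\[
\operatorname{tr}\bigl(X_0^TX_0\bigr)-2=\operatorname{tr}\bigl(Q^{-1}M_{\mathsf b}^TQM_{\mathsf b}\bigr)-2 .
\]
This quantity is computable with explicit rational bounds using $100<\alpha<101$. It then suffices to bound it below by roughly $10^{-4}$, which comfortably beats the corresponding quantity controlled by $c$.
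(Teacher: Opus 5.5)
Your convergence argument is correct, and the multiplicative telescoping is cleaner than the paper's and gives a better rate. The non-orthogonality step, however, rests on a false claim: $X_0$ is \emph{exactly} a rotation. From \eqref{eq:explicit-cocycle}, the matrix $\left(\begin{smallmatrix}10+u_2&1-u_1\\-1&10\end{smallmatrix}\right)$ equals the adjugate of $\left(\begin{smallmatrix}10-u_2&u_1-1\\1&10\end{smallmatrix}\right)$ plus $u_2I_2$, that is,
\[
\sqrt{d_{\mathsf b}(u)}\,M_{\mathsf b}(u)=\sqrt{d_{\mathsf a}(u)}\,M_{\mathsf a}(u)^{-1}+u_2I_2\qquad(u\in U).
\]
At $u=\bar u$ this gives $X_0=\lambda R_{-\omega_{\mathsf a}}+\mu I_2$ for real scalars $\lambda,\mu$. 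Such a matrix with determinant $1$ lies in $\SO(2)$. Equivalently, $M_{\mathsf b}(\bar u)^TQM_{\mathsf b}(\bar u)=Q$ holds exactly. $M_{\mathsf b}(\bar u)$ is only approximately a Euclidean rotation (its off-diagonal entries are about $0.0896$ and $-0.0995$), but it is exactly a $Q$-rotation. So the ``rotation versus anisotropic form'' heuristic does not apply, $\tr(X_0^TX_0)-2=0$, and the lower bound you plan to prove does not exist.

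There is also an off-by-one error in your constant: $c\approx 0.1/900$ does not bound $\|B-X_0\|$. Since $z_\delta$ and $z_{\mathsf a}$ already differ at position $0$, \eqref{eq:word-closeness} at $r=0$ gives only $\|u(z_\delta)-\bar u\|_\infty\le 2/5$, i.e.\ $\|\Delta_0\|\le 0.09$, so $c\approx 0.1$. In fact $u(z_\delta)-\bar u=\Phi_{\mathsf b}(\bar u)-\bar u\approx(2\cdot10^{-6},-0.0198)$, and $\|X_1-X_0\|$ is of order $10^{-3}$. Your $0.1/900$ is really the tail bound for $\|B-X_1\|$.

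That first step is exactly where the non-orthogonality of $B$ originates. The lone $\mathsf b$-factor contributes no defect; the defect comes from the perturbed factors $M_{\mathsf a}(u(\sigma^{-r}z_\delta))$, chiefly $r=0$. The repair is to compare $B$ with $X_1$ (or $X_2$) rather than $X_0$. By the $Q$-invariance of $M_{\mathsf b}(\bar u)$,
\[
\tr(X_1^TX_1)=\tr\bigl(Q^{-1}M_{\mathsf a}(u')^TQM_{\mathsf a}(u')\bigr),\qquad u'=\Phi_{\mathsf b}(\bar u).
\]
The shift $u'_2-\bar u_2\approx-0.0198$ changes the $(1,1)$ entry of $M_{\mathsf a}$ by about $0.002$. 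Its anticonformal part gives $\sigma_1(X_1)-1\approx 10^{-3}$, so $\tr(X_1^TX_1)-2\approx 4\cdot10^{-6}$ rather than $10^{-4}$. This is still comfortably larger than $\|B-X_1\|\lesssim 1.2\cdot10^{-4}$, so the singular-value argument goes through.

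This is in effect what the paper does. It compares with $B_2$, whose defining product contains $M_{\mathsf a}(\Phi_{\mathsf b}(\bar u))$ and $M_{\mathsf a}(\Phi_{\mathsf a}\Phi_{\mathsf b}(\bar u))$. It then detects non-orthogonality through the commutator with $M(z_{\mathsf a})$, which is numerically about $2\cdot10^{-4}$, against a tail of $1.3\cdot400^{-2}$. Once the comparison index is shifted, your intrinsic singular-value criterion would serve equally well in place of the paper's commutator test.
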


\begin{proof}
    We prove by establishing Cauchy convergence. Let 
    \[
    B_r = R_{-r\omega_{\mathsf a}}G^{-1}M^{(r+1)}(\sigma^{-r}z_\delta)G.
    \]
    Then 
    \begin{equation}\label{eq:GBrGinv}
    GB_rG^{-1} = M^{(r)}(z_{\mathsf a})^{-1} M^{(r+1)}(\sigma^{-r}z_\delta) = M(z_{\mathsf a})^{-r} M^{(r+1)}(\sigma^{-r}z_\delta).
    \end{equation}
    Taking the difference between two consecutive terms:
    \begin{equation}\label{eq:GBdiffGinv}
    \begin{aligned}
        \|G(B_r - B_{r+1})G^{-1}\| = \| M^{(r)}(z_{\mathsf a})^{-1} (I - M(z_{\mathsf a})^{-1} M(\sigma^{-r-1} z_{\delta})) M^{(r+1)}(\sigma^{-r}z_\delta) \|.
    \end{aligned}
    \end{equation}
    From Lemma~\ref{lem:fiber-bound}, we control the middle term: \[\| I - M(z_{\mathsf a})^{-1} M(\sigma^{-r-1} z_{\delta})\|\le 600^{-r}. \]
    The other terms can be controlled by Corollary~\ref{lem:singMsu}:
    \[
    \begin{aligned}
        \| M^{(r)}(z_{\mathsf a})^{-1}\| \le 1.1^r, \quad \|M^{(r+1)}(\sigma^{-r}z_\delta)\| \le 1.1^{r+1}.
    \end{aligned}
    \]
    Consequently,
    \[
    \begin{aligned}
        \|G(B_r - B_{r+1})G^{-1}\| &\le 1 \times 1.1^{2r+1} \times 600^{-r}
        \le 1.2 \times 400^{-r},
    \end{aligned}
    \]
    and
    \[
    \begin{aligned}
        \|B_r - B_{r+1}\| &\le 1.2 \times 400^{-r} \|G\|^2 \le 1.3 \times 400^{-r}.
    \end{aligned}
    \]
    This shows the limit exists and in fact,
    \[\|B_r - B\| \le \sum_{n=0}^{\infty} 1.3 \times 400^{-r-n} \le 1.4 \times 400^{-r}. \]
    We also obtain from \eqref{eq:GBdiffGinv} that
    \begin{equation}\label{eq:GBlimGinv}
    \|G(B_r - B)G^{-1}\| \le \sum_{n=0}^{\infty} 1.2 \times 400^{-r-n} \le 1.3 \times 400^{-r}. 
    \end{equation}
    To prove that $B$ is not orthogonal, suppose otherwise that $B \in \SO(2)$. Then $B$ commutes with $R_{\omega_{\mathsf a}}$, and hence $GBG^{-1}$ commutes with $M(z_{\mathsf a})$. 
    Taking $r=2$ in equation \eqref{eq:GBrGinv},
    \begin{equation*}
        M(z_{\mathsf a}) GB_2G^{-1} - GB_2G^{-1} M(z_{\mathsf a}) = M(z_{\mathsf a})^{-1} M^{(3)}(\sigma^{-2}z_\delta) - M(z_{\mathsf a})^{-2} M^{(3)}(\sigma^{-2}z_\delta) M(z_{\mathsf a}),
    \end{equation*}
    where, for $\bar u=u(z_{\mathsf a})$,
    \[ M^{(3)}(\sigma^{-2}z_\delta) = M_\mathsf{a}(\Phi_{\mathsf a} (\Phi_{\mathsf b} (\bar{u})))\  M_\mathsf{a} (\Phi_{\mathsf b} (\bar{u})) \  M_\mathsf{b}(\bar{u}). \]
    Direct computation gives
    \[ M(z_{\mathsf a}) GB_2G^{-1} - GB_2G^{-1} M(z_{\mathsf a}) \approx  \begin{pmatrix}1.7417\times 10^{-8} &1.7715 \times 10^{-4} \\ 1.9664 \times 10^{-4} & -1.7417 \times 10^{-8}\end{pmatrix} \]
    and therefore 
    \begin{equation}
        \| M(z_{\mathsf a}) GB_2G^{-1} - GB_2G^{-1} M(z_{\mathsf a}) \| \ge 1.9663 \times 10^{-4}
    \end{equation}
    However, from equation \eqref{eq:GBlimGinv} we also obtain
    \begin{equation}
    \begin{aligned}
        \| M(z_{\mathsf a}) GB_2G^{-1}  - M(z_{\mathsf a}) GBG^{-1} \| &\le \| M(z_{\mathsf a}) \| \|GB_2G^{-1}  -  GBG^{-1} \| \le 10^{-5} \\
        \|  GB_2G^{-1}M(z_{\mathsf a})  - GBG^{-1}M(z_{\mathsf a}) \| &\le  \|GB_2G^{-1}  -  GBG^{-1} \| \| M(z_{\mathsf a}) \| \le 10^{-5} \\
    \end{aligned}
    \end{equation}
    This implies $\|M(z_{\mathsf a}) GBG^{-1} - GBG^{-1}M(z_{\mathsf a})\|$ cannot be zero. This contradiction proves the lemma.

\end{proof}

\subsection{Full and partial fiber returns for the periodic word}

We now show that the matrix cocycle $M^{(j)}(z_n)$ in the beginning part of $\mathsf a^n$-part of the $z_n$ can be approximated by $GR_{j\omega_{\mathsf a}}G^{-1}$, and the matrix $P_n$ for the full $\mathsf a^n\mathsf b$ period converges to $GR_{n\omega_{\mathsf a}}BG^{-1}$.

\begin{corollary}\label{cor:period-estimates}
For every $n\ge1$,
\begin{equation}\label{eq:period-asymptotic}
\|G^{-1}P_nG-R_{n\omega_{\mathsf a}}B\|
\le4\cdot400^{-n}.
\end{equation}
For $0\le j\le n$,
\begin{equation}\label{eq:partial-rotation}
\|M^{(j)}(z_n)-GR_{j\omega_{\mathsf a}}G^{-1}\|
\le2\cdot600^{-(n-j)}.
\end{equation}
Moreover, we have the uniform bound on norms:
\begin{equation}\label{eq:partial-uniform-bound}
\|P_n\|+\|P_n^{-1}\|
+\|M^{(j)}(z_n)\|+\|M^{(j)}(z_n)^{-1}\|
\le10.
\end{equation}
\end{corollary}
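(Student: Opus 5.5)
The plan is to compare the period word $z_n$, shifted appropriately, with the single-defect word $z_\delta$, and to compare its initial segment with the constant word $z_{\mathsf a}$, using Lemma~\ref{lem:fiber-bound} throughout.

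\textbf{Partial rotations.} For \eqref{eq:partial-rotation}, note that $z_n$ and $z_{\mathsf a}$ agree in positions $0\le i<n$. Apply Lemma~\ref{lem:fiber-bound} with $\ell=j$ and $N=n-j$. This gives $\|M^{(j)}(z_{\mathsf a})^{-1}M^{(j)}(z_n)-I_2\|\le 600^{-(n-j)}$. Since $M^{(j)}(z_{\mathsf a})=M(z_{\mathsf a})^j=GR_{j\omega_{\mathsf a}}G^{-1}$, we get
\[
\|M^{(j)}(z_n)-GR_{j\omega_{\mathsf a}}G^{-1}\|\le\|G\|\,\|G^{-1}\|\,600^{-(n-j)}.
\]
Here $\|G^{-1}\|=\|G\|$ because $G\in\SL_2(\R)$. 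So the constant is $\|G\|^2<1.03^2<2$.

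\textbf{Full period.} For \eqref{eq:period-asymptotic}, use the shift $\sigma^{-n}z_\delta$. Its symbols are $\mathsf a$ in positions $0,\dots,n-1$ and $\mathsf b$ at position $n$. It agrees with $z_n$ on $[0,n]$, but after position $n$ they differ: $z_n$ has $\mathsf a^n\mathsf b$ again, while $\sigma^{-n}z_\delta$ has $\mathsf a$ forever. So they agree only on positions $0\le i<2n+1$. Apply Lemma~\ref{lem:fiber-bound} with $\ell=n+1$ and $N=n$ to get
\[
\|M^{(n+1)}(\sigma^{-n}z_\delta)^{-1}P_n-I_2\|\le 600^{-n}.
\]
From the definition of $B_r$ with $r=n$, we have $M^{(n+1)}(\sigma^{-n}z_\delta)=GR_{n\omega_{\mathsf a}}B_nG^{-1}$. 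Then write
\[
G^{-1}P_nG-R_{n\omega_{\mathsf a}}B
=R_{n\omega_{\mathsf a}}(B_n-B)+G^{-1}M^{(n+1)}(\sigma^{-n}z_\delta)\bigl(M^{(n+1)}(\sigma^{-n}z_\delta)^{-1}P_n-I_2\bigr)G.
\]
Bound the two terms as follows.
\begin{itemize}
\item[(i)] The first term is at most $1.4\cdot400^{-n}$, by the estimate in the proof of Lemma~\ref{lem:nonorthogonal-defect}.
\item[(ii)] The second term is at most $\|G\|^2\,\|B_n\|\cdot600^{-n}$.
\end{itemize}

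\textbf{The only delicate point: bounding $\|B_n\|$.} Use $\|B_n\|\le\|B\|+1.4$ and bound $\|B\|$ through $B_0$. From the definition, $B_0=G^{-1}M(z_\delta)G$, so $\|B_0\|\le 1.03^2\cdot1.04$ by Corollary~\ref{lem:singMsu}. Hence $\|B\|\le\|B_0\|+1.4<2.6$. Then the second term is below $1.07\cdot 4\cdot 600^{-n}$. Since $(400/600)^n\le 2/3$ for $n\ge1$, the total stays under $4\cdot400^{-n}$. If the constants turn out tight, I would refine by using the true $\|B_n\|$ (which is near $\|B\|$ up to $1.4\cdot400^{-n}$) and an explicit bound on $\|B\|$ from the computed value of $B_2$.

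\textbf{Uniform bound.} For \eqref{eq:partial-uniform-bound}, take norms in the two estimates just proved. First, $\|P_n\|\le\|G\|^2(\|B\|+4\cdot400^{-n})$. Since $\|P_n^{-1}\|=\|P_n\|$ in $\SL_2(\R)$, the same bound holds for the inverse. Second, $\|M^{(j)}(z_n)\|\le\|G\|^2+2\cdot600^{-(n-j)}\le 1.07+2$, and again the inverse has the same norm. Checking that the four quantities sum to at most $10$ requires $\|B\|$ to be modest, roughly below $2$. The crude bound $\|B\|<2.6$ would give $2(1.07\cdot2.61)+2(3.07)\approx 11.7$, which is too large. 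So the main obstacle is to get the sharper value $\|B\|\lesssim1.3$. I would do this by numerically evaluating $B_2$, as was done in Lemma~\ref{lem:nonorthogonal-defect}, and adding the $1.4\cdot400^{-2}$ error. With that, the sum is about $2(1.07\cdot1.3)+2(1.07+2\cdot600^{-(n-j)})$. To keep this under $10$, the $M^{(j)}$ bound also needs care: use $\|M^{(j)}(z_n)\|\le\|G\|^2(1+600^{-(n-j)})$ directly from the multiplicative form of the estimate, rather than the additive one.
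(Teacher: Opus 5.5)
Your proof is correct in substance, and the partial-rotation step is exactly the paper's. Where you differ is in how you bound $\|P_n\|$, and that difference is the source of the obstacle you ran into.

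\textbf{The paper's route.} The paper first notes that \eqref{eq:partial-rotation}, in its multiplicative form, gives $\|M^{(j)}(z_n)\|\le\|G\|^2(1+600^{-(n-j)})\le 2\|G\|^2$ for every $0\le j\le n$, including $j=n$. It then uses the factorization $P_n=M^{(n)}(z_n)M(\sigma^nz_n)$ together with Corollary~\ref{lem:singMsu} to get $\|P_n\|\le 2\cdot1.04\,\|G\|^2$ directly. This immediately yields \eqref{eq:partial-uniform-bound} as $4(1+1.04)\|G\|^2<8.7$. The same bound on $\|P_n\|$ then controls the error term $\|G\|^2\|P_n\|\,600^{-n}$ in \eqref{eq:period-asymptotic}. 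The paper compares via $P_n^{-1}M^{(n+1)}(\sigma^{-n}z_\delta)$, so $P_n$ rather than $B_n$ appears in front. No information about $B$ beyond the Cauchy estimate is needed.

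\textbf{Your route.} You extract $\|P_n\|$ from $G^{-1}P_nG\approx R_{n\omega_{\mathsf a}}B$, which forces you to bound $\|B\|$. Your bound through $B_0$, giving $\|B\|<2.51$, is valid. Two corrections to your arithmetic:
\begin{itemize}
\item With the crude $\|B_n\|\le\|B\|+1.4$, your total at $n=1$ is about $(1.4+4.28\cdot\tfrac23)\,400^{-1}\approx4.25\cdot400^{-1}$. This exceeds the target, so you genuinely need the refinement $\|B_n\|\le\|B\|+1.4\cdot400^{-n}$. That gives about $3.2\cdot400^{-1}$.
\item The numerical evaluation of $B_2$ is unnecessary. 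Once you use the multiplicative bound $\|M^{(j)}(z_n)\|\le 2\|G\|^2<2.13$, even $\|B\|<2.51$ gives $\|P_n\|\le\|G\|^2(\|B\|+0.01)<2.67$. The four norms then sum to about $9.6<10$. Your ``main obstacle'' was an artifact of the additive estimate.
\end{itemize}
Your numerical plan would also succeed, since $B$ is in fact very close to $\SO(2)$, but it is not needed.

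\textbf{What each approach buys.} Your route ties $\|P_n\|$ to the structural limit, essentially $\|G\|^2\|B\|$, at the cost of having to control $B$. The paper's factorization is shorter and self-contained within the partial-rotation estimate.
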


\begin{proof}
Fix $n\ge1$. The words $z_n$ and $\sigma^{-n}z_\delta$ agree in their first
$2n+1$ symbols. 

For Equation~\eqref{eq:partial-rotation}, 
we apply~\eqref{eq:uniform-fiber-bound} with $\ell=j$ and $N=n-j$, which
gives
\[
\bigl\|M^{(j)}(z_{\mathsf a})^{-1}M^{(j)}(z_n)-I_2\bigr\|
\le600^{-(n-j)}.
\]
By Lemma~\ref{lem:constant-branch},
$$
M^{(j)}(z_{\mathsf a})=GR_{j\omega_{\mathsf a}}G^{-1},
\qquad
\|G^{-1}\|=\|G\|<1.03. $$
Consequently,
\[
\|M^{(j)}(z_n)-GR_{j\omega_{\mathsf a}}G^{-1}\|
\le\|G\|^2\,600^{-(n-j)}
\le2\cdot600^{-(n-j)},
\]
proving~\eqref{eq:partial-rotation}.

The same relative estimate gives
\[
\|M^{(j)}(z_n)\|
\le\|G\|^2\bigl(1+600^{-(n-j)}\bigr)
\le2\|G\|^2.
\]
Since $P_n=M^{(n)}(z_n)M(\sigma^nz_n)$,
Corollary~\ref{lem:singMsu} yields
\[
\|P_n\|\le2\cdot1.04\|G\|^2.
\]
A matrix in $\SL_2(\R)$ and its inverse have the same Euclidean
operator norm. Hence
\[
\begin{aligned}
&\|P_n\|+\|P_n^{-1}\|
+\|M^{(j)}(z_n)\|+\|M^{(j)}(z_n)^{-1}\|\\
&\qquad\le4(1+1.04)\|G\|^2<10,
\end{aligned}
\]
which proves~\eqref{eq:partial-uniform-bound}.

 Now we prove Equation~\eqref{eq:period-asymptotic}. Applying~\eqref{eq:uniform-fiber-bound}, with
$\ell=n+1$ and $N=n$, gives
\[
\bigl\|P_n^{-1}M^{(n+1)}(\sigma^{-n}z_\delta)-I_2\bigr\|
\le600^{-n}.
\]
By~\eqref{eq:GBrGinv} and~\eqref{eq:GBlimGinv}, with $r=n$,
\[
G^{-1}M^{(n+1)}(\sigma^{-n}z_\delta)G
=R_{n\omega_{\mathsf a}}B_n,
\qquad
\|G(B_n-B)G^{-1}\|\le1.3\,400^{-n}.
\]
Together with the bound $\|P_n\|\le2\cdot1.04\|G\|^2$
proved above, these give
\[
\begin{aligned}
\|G^{-1}P_nG-R_{n\omega_{\mathsf a}}B\|
&\le
\|G\|^2\|P_n\|\,600^{-n}
+\|G\|^2\|G(B_n-B)G^{-1}\|\\
&\le
2\cdot1.04\|G\|^4\,600^{-n}
+1.3\|G\|^2\,400^{-n}
<4\cdot400^{-n},
\end{aligned}
\]
proving~\eqref{eq:period-asymptotic}.
\end{proof}

In particular, whenever $j\to\infty$ and $n-j\to\infty$,
\begin{equation}\label{eq:bulk-limit}
\sigma^jz_n\longrightarrow z_{\mathsf a}\quad\text{in }\Omega,\qquad
G^{-1}M^{(j)}(z_n)G-R_{j\omega_{\mathsf a}}\longrightarrow0.
\end{equation}
Continuity of the return time $h$ then gives $h(\sigma^jz_n)\to h(z_{\mathsf a})$.

\subsection{When is $P_n$ elliptic?}

When $n_j\omega_{\mathsf a}\to \theta\in \T^1$ as $n_j\to \infty$, the limiting matrix at phase $\theta$ is conjugate to $R_\theta B$. We now identify a set $I_{\mathrm{ell}}\subset \T^1$, such that $R_\theta B$ is elliptic if and only if $\theta\in I_{\mathrm{ell}}$.

Writing $B=(b_{ij})$, one has
\begin{equation*}
\tr(R_\theta B)=(b_{11}+b_{22})\cos\theta+(b_{12}-b_{21})\sin\theta = K\sin(\theta+\psi).
\end{equation*}
Moreover,
\begin{equation*}
K^2 = (b_{11}+b_{22})^2+(b_{12}-b_{21})^2
=\sum b_{ij}^2+2\det B>4,
\end{equation*}
because $\det B=1$ and $B\notin\SO(2)$.
Thus the elliptic phase set
\begin{equation*}
I_{\mathrm{ell}}=\{\theta\in\T^1:|\tr(R_\theta B)|<2\}
\end{equation*}
is nonempty and open. In fact, it is the union of two antipodal open intervals.
At each component endpoint $\theta_*$, the matrix $R_{\theta_*}B$ is nontrivial parabolic (i.e. $R_{\theta_*}B\neq \pm I_2$).


\begin{proposition}\label{prop:admissible}
For every compact interval $J\Subset I_{\mathrm{ell}}$, there exist $N_J,C_J>0$ such that for any $n\ge N_J$ with $n\omega_{\mathsf a}\in J$ the fiber shear $P_n$ is elliptic; and for any $k\in \Z$, $
\|P_n^k\|\le C_{J}.$

Whenever $P_n$ is elliptic, its rotation angle is irrational modulo $\pi$, and for every $E\in\SL_2(\R)$ the orbit of $C(z_n,E)\Z^3$ has compact closure. 
\end{proposition}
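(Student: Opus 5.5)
The plan has three parts: a perturbation argument for ellipticity, a compactness argument for the uniform power bound, and an algebraic argument for irrationality; compactness of the orbit closure then follows from Proposition~\ref{prop:complete-orbit}.

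\textbf{Ellipticity.} On the compact interval $J\Subset I_{\mathrm{ell}}$ the continuous function $\theta\mapsto|\tr(R_\theta B)|$ attains a maximum $2-2\eta_J<2$. By \eqref{eq:period-asymptotic},
\[
|\tr P_n-\tr(R_{n\omega_{\mathsf a}}B)|=|\tr(G^{-1}P_nG)-\tr(R_{n\omega_{\mathsf a}}B)|\le 8\cdot 400^{-n},
\]
since the trace of a $2\times2$ matrix is bounded by twice its operator norm. Choose $N_J$ with $8\cdot400^{-N_J}<\eta_J$. Then for $n\ge N_J$ with $n\omega_{\mathsf a}\in J$ we get $|\tr P_n|<2-\eta_J$, so $P_n$ is elliptic.

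\textbf{Uniform power bound.} Any elliptic $A\in\SL_2(\R)$ can be written $A=H_AR_\phi H_A^{-1}$, and then $\|A^k\|\le\|H_A\|\,\|H_A^{-1}\|=\|H_A\|^2$ for all $k\in\Z$. The conjugator can be chosen as a continuous function of $A$ on the open elliptic set; for instance one can take $H_A$ from the positive-definite $A$-invariant quadratic form, normalized to determinant one. Set
\[
\mathcal K_J=\{A\in\SL_2(\R): |\tr A|\le 2-\eta_J,\ \|A-GR_\theta BG^{-1}\|\le 1 \text{ for some }\theta\in J\}.
\]
This set is compact and consists of elliptic elements, so $\|H_A\|^2$ is bounded on it by some $C_J$. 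For $n\ge N_J$ (enlarging $N_J$ if needed) we have $P_n\in\mathcal K_J$, since $\|P_n-GR_{n\omega_{\mathsf a}}BG^{-1}\|\le\|G\|^2\cdot4\cdot400^{-n}$. This gives $\|P_n^k\|\le C_J$ for all $k\in\Z$. The only care needed here is the continuity of the conjugator, and an explicit formula handles it.

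\textbf{Irrationality, the main obstacle.} I would copy the Galois argument of Lemma~\ref{lem:constant-branch}. The idea is to find an integral matrix $\Gamma\in\SL_3(\Z)$ conjugate to $\mathrm{diag}(\beta,\beta^{-1/2}P_n)$ for some real $\beta>1$.
\begin{itemize}
\item Iterating \eqref{eq:symbolic-marked-return} over one period and using $\sigma^{n+1}z_n=z_n$ gives
\[
g_{\tau_n}C(z_n,I_2)\,\Gamma=C(z_n,P_n),\qquad \Gamma=\Gamma_{z_n(0)}\cdots\Gamma_{z_n(n)}=\Gamma_{\mathsf a}^n\Gamma_{\mathsf b}.
\]
\item Rearranging, $C(z_n,I_2)\,\Gamma\, C(z_n,I_2)^{-1}=\mathrm{diag}(e^{2\tau_n},e^{-\tau_n}P_n)$.
\item So $\Gamma$ has eigenvalues $\beta=e^{2\tau_n}>1$ and $\beta^{-1/2}e^{\pm i\phi}$, where $\phi$ is the rotation angle of $P_n$.
\item Suppose $\phi/\pi\in\Q$. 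Then $P_n^m=I_2$ for some $m\ge1$ (the case $P_n^m=-I_2$ is absorbed by doubling $m$).
\item The eigenvalues of $\Gamma^m$ are then $\beta^m$ and a double $\beta^{-m/2}$, so the characteristic polynomial of $\Gamma^m$ has a repeated real root and is reducible over $\Q$.
\item Its roots are algebraic units, so $\beta^m$ is a unit of degree at most $2$. A unit $\beta^m>1$ of degree two has conjugate $\beta^{-m}$, not $\beta^{-m/2}$; degree one would force $\beta^m=\pm1$. Hence $\beta^m$ has degree $3$.
\item But then the Galois conjugates of $\beta^m$ are distinct, contradicting the double root $\beta^{-m/2}$.
\item So the rotation angle of an elliptic $P_n$ is irrational modulo $\pi$.
\end{itemize}
The delicate point is this degree bookkeeping, which must avoid using irreducibility of the characteristic polynomial of $\Gamma$ itself.

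\textbf{Compact closure.} By \eqref{eq:complete-orbit}, the forward and backward orbit of $C(z_n,E)\Z^3$ is contained in
\[
\{g_rC(\sigma^jz_n,EP_n^kM^{(j)}(z_n))\Z^3\}.
\]
The indices $j$ range over finitely many values and $r$ over a bounded interval. The matrices $P_n^k$ are bounded for elliptic $P_n$ (by the conjugation bound above, applied to that single $P_n$), and the $M^{(j)}(z_n)$ are bounded by \eqref{eq:partial-uniform-bound}. Since $C$ is continuous, the orbit lies in a continuous image of a compact set, so its closure is compact.
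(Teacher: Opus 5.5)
Your proof is correct. It shares the paper's skeleton: the trace perturbation via \eqref{eq:period-asymptotic}, the one-period identity $C(z_n,I_2)\Gamma_{\mathsf a}^n\Gamma_{\mathsf b}C(z_n,I_2)^{-1}=\operatorname{diag}(e^{2\tau_n},e^{-\tau_n}P_n)$, and \eqref{eq:complete-orbit} for compactness. Two steps, however, go differently.

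\emph{Uniform power bound.} The paper writes $\tr(G^{-1}P_nG)=2\cos\beta_n$ and uses the Cayley--Hamilton identity $(G^{-1}P_nG)^k=\frac{\sin(k\beta_n)}{\sin\beta_n}G^{-1}P_nG-\frac{\sin((k-1)\beta_n)}{\sin\beta_n}I_2$. The bound is then explicit once $|\sin\beta_n|$ is bounded below. Your argument, a continuous conjugator on a compact set of elliptic matrices, is softer but equally valid.

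\emph{Irrationality.} The paper shows that $\chi(\Gamma_{\mathsf a}^n\Gamma_{\mathsf b})$ is irreducible, since the only candidate rational roots $\pm1$ are excluded by the moduli $e^{2\tau_n}>1>e^{-\tau_n}$. It then gets Galois group $S_3$ from the negative discriminant and applies a root-permuting automorphism to the relation $\mu^m=\bar\mu^m=\alpha^{-m/2}$. You instead pass to $\Gamma^m$. Its characteristic polynomial $(X-\beta^m)(X-\beta^{-m/2})^2$ has a repeated root and is therefore reducible, so the unit $\beta^m$ has degree at most $2$, which its norm $\pm1$ rules out. Your route is more elementary, with no Galois group or discriminant. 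The irreducibility of $\chi(\Gamma)$ that you took care to avoid is, however, a one-line check.

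Two minor points: in the degree-two case the conjugate is $\pm\beta^{-m}$, not $\beta^{-m}$. Also, once reducibility bounds the degree by $2$ the contradiction is already complete, so the closing ``hence degree $3$'' step is redundant.
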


\begin{proof}
For the first assertion, we note that since $J\Subset I_{\mathrm{ell}}$, there exists $\epsilon>0$ such that $|\tr(R_\theta B)|<2-\epsilon$ for any $\theta\in J$. Therefore by Corollary \ref{cor:period-estimates}, there exists $N_J>0$ such that $|\tr(P_n)|<2$ if $n\ge N_J$ and $n\omega_{\mathsf a}\in J$. 
Write $\tr(G^{-1}P_nG)=2\cos\beta_n$; then $|\sin\beta_n|$ is uniformly bounded below.
For $k\ge1$, Cayley--Hamilton gives
\[(G^{-1}P_nG)^k=\frac{\sin(k\beta_n)}{\sin\beta_n}G^{-1}P_nG-\frac{\sin((k-1)\beta_n)}{\sin\beta_n}I_2.\] This bounds positive powers uniformly. The same argument applies to negative powers, and conjugation by the fixed matrix $G$ proves the claim.

For the second, the marked return through one period gives
\[
\begin{split}
&C(z_n,I_2)\Gamma_{\mathsf a}^n\Gamma_{\mathsf b}C(z_n,I_2)^{-1}\\
&\hspace{1cm}=\operatorname{diag}\bigl(e^{2\tau_n},e^{-\tau_n}P_n\bigr).
\end{split}
\]
Since $\tau_n>0$, the integral matrix $\Gamma_{\mathsf a}^n\Gamma_{\mathsf b} \in \SL_3(\Z)$ has one eigenvalue ($e^{2\tau_n}$) greater than one and a conjugate pair ($e^{-\tau_n\pm i\theta_n}$) of modulus less than one.
Its monic cubic characteristic polynomial $\chi(\Gamma_{\mathsf a}^n\Gamma_{\mathsf b}) \in \Z[x]$ is irreducible: the constant term is $-1$ and a rational root would be $1$ or $-1$, and neither occurs by the modulus argument. With a conjugate pair of roots, $\chi$ has the discriminant $\Delta(\chi) < 0$ and the Galois group is $\operatorname{Gal}(\chi) = S_3$. 
The same field automorphism argument as in Lemma~\ref{lem:constant-branch} therefore proves irrationality of $\theta_n/\pi$ where $\theta_n$ is the angle of $P_n$.
The powers of $P_n$ are bounded, so~\eqref{eq:complete-orbit} represents the orbit by a bounded set of matrices with bounded inverses.
This gives compactness of the orbit closure.
\end{proof}

We call $n$ \emph{admissible} when $P_n$ is elliptic.



\section{Orbit closures and limiting sets}\label{sec:depths}

Each elliptic periodic word $(\mathsf a^n\mathsf b)^\infty$ gives an orbit closure that is a two-torus.
As the $\mathsf a$-block grows and its rotation phase converges to $\theta$, positions near its beginning, near its end, and deep in the middle give three limiting families.
Together these form a continuous compact family $\mathcal K(\theta)$ whose depth tends to zero at a parabolic endpoint.

\subsection{Exact periodic-code orbit closures}

For admissible $n$, let
\begin{equation*}
\mathcal E_n=\cl{\{P_n^k:k\ge0\}}.
\end{equation*}

By~\eqref{eq:complete-orbit}, the corresponding orbit closure is
\begin{equation}\label{eq:On-explicit}
\begin{split}
\mathcal O_n
&=\cl{\{g_tC(z_n,I_2)\Z^3:t\ge0\}}\\
&=\left\{g_rC(\sigma^jz_n,EM^{(j)}(z_n))\Z^3:
\begin{array}{l}
E\in\mathcal E_n,\ 0\le j\le n,\\
0\le r\le h(\sigma^jz_n)
\end{array}\right\}.
\end{split}
\end{equation}
Positive powers of $P_n$ visit every open subset of $\mathcal E_n$ infinitely often.
Consequently every tail of the orbit has closure $\mathcal O_n$, which is compact and invariant under the full flow.

\begin{theorem}\label{thm:invariant-two-tori}
For every admissible $n$, the set $\mathcal O_n$ is a compact embedded two-torus in $X_3$, invariant under $g_t$.
Every $g_t$-orbit on each of these tori is dense in that torus.
\end{theorem}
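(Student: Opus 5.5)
We will realize $\mathcal O_n$ as the image of an explicit map from a two-torus. Since $P_n$ is elliptic with rotation angle irrational modulo $\pi$ (Proposition~\ref{prop:admissible}), we write $P_n=G_nR_{\beta_n}G_n^{-1}$ with $\beta_n/\pi\notin\Q$. Then $\mathcal E_n=\cl{\{P_n^k:k\ge0\}}=\{G_nR_\phi G_n^{-1}:\phi\in\T^1\}$, which is a circle. We define
\[
\Theta:\T^1\times[0,\tau_n]\to X_3,\qquad \Theta(\phi,t)=g_tC(z_n,G_nR_\phi G_n^{-1})\Z^3 .
\]
By \eqref{eq:complete-orbit} with $k=1$, $j=0$, we have $g_{\tau_n}C(z_n,E)\Z^3=C(z_n,EP_n)\Z^3$. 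Hence $\Theta(\phi,\tau_n)=\Theta(\phi+\beta_n,0)$, so $\Theta$ descends to a continuous map from the mapping torus of the rotation $R_{\beta_n}$ on the circle. That mapping torus is a two-torus $\T^2$. By \eqref{eq:On-explicit}, every point $g_rC(\sigma^jz_n,EM^{(j)}(z_n))\Z^3$ equals $g_{T_j(z_n)+r}C(z_n,E)\Z^3$ by \eqref{eq:factor-map}. Therefore the image of $\Theta$ is exactly $\mathcal O_n$. Invariance under the full flow is immediate, because the flow acts on the mapping torus by translation in $t$.

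On this model the flow is the suspension of an irrational rotation, which is a linear flow with irrational slope on $\T^2$. Every orbit of that flow is dense, so once $\Theta$ is shown to be a homeomorphism onto its image, density of every orbit in $\mathcal O_n$ follows. Compactness of the domain makes continuity plus injectivity enough for an embedding. Immersion properties come for free in the flow direction, since $g_t$ has no fixed points. In the circle direction we differentiate $\phi\mapsto C(z_n,G_nR_\phi G_n^{-1})$, and the derivative is nonzero in $\SL_3(\R)$ because $C(u,v,E)=\operatorname{diag}(1,E)C(u,v,I_2)$.

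The main obstacle is injectivity. Suppose $g_tC(z_n,E)\gamma=g_{t'}C(z_n,E')$ for some $\gamma\in\SL_3(\Z)$, with $t,t'\in[0,\tau_n)$. Replacing $g_{t-t'}C(z_n,E)$ by its base-transverse form, we must show $\gamma=I$ and $t=t'$. Our plan uses the uniqueness in Proposition~\ref{prop:return}, together with the fact that $H$ increases at rate $6$. Both lattices are then represented on the translated section $\{H=6s\}$ with base coordinates from the horseshoe $\mathcal H$.

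The key input is that $u(z_n),v(z_n)$ lie in $U\times V$. We need to show that a marked frame in the chart $C(u,v,E)$ with $(u,v)\in U\times V$ can equal $C(u',v',E')\gamma$ only in restricted cases. The first case is $\gamma=I$. The other case is that $\gamma$ is a product of the $\Gamma_s^{\pm1}$ relating different times, which forces a time shift by the return times $h>1$.

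We expect to prove this by a systole argument: in the chart, the first column direction determines a shortest-type vector $(1,-u)$ up to scaling, and smallness of $U,V$ pins down the marking. The concrete plan is to show that the primitive vector realizing the minimum of $\max\{e^{-2t}|q|,e^t\|\cdot\|\}$ is unique near section times. If this fails, we would fall back on the algebraic route. In that route, $\gamma$ must commute with the period $\Gamma_{\mathsf a}^n\Gamma_{\mathsf b}$ up to the orbit structure. By irreducibility of its characteristic polynomial (Proposition~\ref{prop:admissible}), the centralizer is a unit group of rank at most $1$ in the complex direction. Such units cannot act as an irrational rotation identification, which gives injectivity modulo the period.
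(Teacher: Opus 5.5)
\textbf{Gap: the embedding step.} Your parametrization $\Theta$, the identification of its image with $\mathcal O_n$, and the invariance are fine. Density needs no injectivity: $\Theta$ is a continuous flow-equivariant surjection from the mapping torus, so it carries dense orbits to dense orbits.

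The problem is that you never prove injectivity of $\Theta$ on the mapping torus. Both routes are only plans. The systole route cannot pin down the marking $\gamma$, since a shortest vector determines a line, not a basis.

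More seriously, the injectivity you aim for ($\gamma=I$, $t=t'$) is not guaranteed by the structure. Suppose $g_tC(z_n,E)\gamma=g_{t'}C(z_n,E')$. Then $C(z_n,I_2)\gamma C(z_n,I_2)^{-1}$ lies in the abelian group $\mathcal A_n=\{g_s\operatorname{diag}(1,E):s\in\R,\ E\in\mathcal E_n\}$. This group also contains $C(z_n,I_2)\Gamma C(z_n,I_2)^{-1}=g_{-\tau_n}\operatorname{diag}(1,P_n)$, where $\Gamma=\Gamma_{\mathsf a}^n\Gamma_{\mathsf b}$. So $\gamma$ commutes with $\Gamma$. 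Since the characteristic polynomial of $\Gamma$ is irreducible, $\gamma$ is a norm-one unit of an order in the complex cubic field $\Q[\Gamma]$. By Dirichlet's unit theorem these units form an infinite cyclic group $\langle\epsilon\rangle$ (the torsion $-1$ has norm $-1$), with $\Gamma=\epsilon^m$ for some $m\ge1$.

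Nothing in your argument forces $m=1$. If $m>1$, then $C(z_n,I_2)\epsilon C(z_n,I_2)^{-1}=g_{-\tau_n/m}\operatorname{diag}(1,Q)$ for some $Q\in\mathcal E_n$. Hence $\Theta$ identifies the point at time $\tau_n/m$ with a point at time $0$, making it an $m$-fold covering rather than injective. Your assertion that such units ``cannot act as an irrational rotation identification'' is exactly the unproved step. It amounts to primitivity of $\Gamma_{\mathsf a}^n\Gamma_{\mathsf b}$ in its centralizer, a nontrivial arithmetic statement that the theorem does not need.

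\textbf{Missing idea: let the full stabilizer absorb the ambiguity.} This is what the paper does. Regard $\Theta$ as the orbit map $a\mapsto a\Lambda_n$ of $\mathcal A_n\cong\R\times S^1$, with $\Lambda_n=C(z_n,I_2)\Z^3$. Its fibres are cosets of $S=\operatorname{Stab}_{\mathcal A_n}(\Lambda_n)$. This $S$ is discrete, since stabilizers of lattices in $\SL_3(\R)$ are discrete. It is cocompact, because the period identity puts $g_{\tau_n}\operatorname{diag}(1,P_n^{-1})$ in $S$. Hence $\mathcal O_n=\mathcal A_n\Lambda_n\cong\mathcal A_n/S$ is a compact connected two-dimensional abelian Lie group, i.e.\ a two-torus. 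The orbit map induces an injective immersion of this torus, hence an embedding. Your mapping torus is then just the finite cover $\mathcal A_n/\langle g_{\tau_n}\operatorname{diag}(1,P_n^{-1})\rangle$ of $\mathcal O_n$, and you never need to determine $S$ exactly.
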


\begin{proof}
For an admissible $n$, put $\Lambda_n=C(z_n,I_2)\Z^3$ and consider the abelian subgroup
\[
\mathcal A_n=\{g_t\operatorname{diag}(1,E):t\in\R,\ E\in\mathcal E_n\}
\cong\R\times S^1.
\]
The period identity is
\begin{equation}\label{eq:torus-period-identification}
g_{\tau_n}\Lambda_n=\operatorname{diag}(1,P_n)\Lambda_n.
\end{equation}
The stabilizer $S$ of $\Lambda_n$ in $\mathcal A_n$ contains the element with coordinates $(\tau_n,P_n^{-1})$.
It is discrete, since the stabilizer of any lattice in $\SL_3(\R)$ is discrete.
The quotient $\mathcal A_n/S$ is compact: every coset has a representative with $0\le t\le \tau_n$ and $E\in\mathcal E_n$, so $\mathcal A_n/S$ is a connected compact abelian Lie group of dimension two, hence a two-torus.
Therefore the induced map from $\mathcal A_n/S$ to $X_3$ is an embedding of a two-torus.

The positive powers of $P_n$ are dense in $\mathcal E_n$.
Together with~\eqref{eq:torus-period-identification}, this shows that the forward $g_t$-orbit of $\Lambda_n$ is dense in $\mathcal A_n\Lambda_n$.
Hence this embedded torus is precisely $\mathcal O_n$. Since $\mathcal A_n$ is abelian, translation by any of its elements carries that dense orbit to the orbit of the translated point; every orbit on the torus is therefore dense.

\end{proof}

\subsection{Derivation of the limiting models}

We seek to describe the limit of $\mathcal O_n$ as $n\to\infty$ and $n\omega_{\mathsf a}\to\theta$ within an elliptic interval.

Fix a connected component $I_0$ of $I_{\mathrm{ell}}$, and an endpoint $\theta_*\in\partial I_0$.

On $I_0$, choose continuously $H_\theta\in\SL_2(\R)$ and $\beta(\theta)\in(0,2\pi)$ such that
\begin{equation*}
H_\theta(R_{\theta}B)H_\theta^{-1}=R_{\beta(\theta)}.
\end{equation*}

Such a choice exists locally throughout the elliptic locus: the unique fixed point of the associated M\"obius transformation in the upper half-plane $\mathbb H$ varies continuously, and a matrix sending the fixed point to $i\in \mathbb H$ gives $H_\theta$.

The period asymptotic~\eqref{eq:period-asymptotic} identifies the limiting elliptic subgroup in the original fiber coordinates as
\begin{equation*}
\mathcal E(\theta)=\{GH_\theta^{-1}R_\xi H_{\theta}G^{-1}:\xi\in\T^1\}.
\end{equation*}
This is the full compact subgroup containing $G(R_{\theta}B)G^{-1}$, independently of the choice of $H_\theta$.
The full circle appears because each admissible $P_n$ has irrational angle, even when the limiting angle $\beta(\theta)$ is rational modulo $\pi$.

\paragraph{Beginning of the block.}
At the $0$-place of the word, the nearest $\mathsf b$ is at position $-1$, while the next $\mathsf b$ moves to infinity as $n\to \infty$.
Thus $z_n\to\sigma z_\delta$ in $\Omega$.
More generally, at any fixed position $r\ge0$,
\begin{equation}\label{eq:beginning-limit}
\sigma^rz_n\longrightarrow\sigma^{r+1}z_\delta,\qquad
M^{(r)}(z_n)\longrightarrow M^{(r)}(\sigma z_\delta)=G R_{r\omega_{\mathsf a}}G^{-1}.
\end{equation}
The first limit follows by checking each fixed window of symbols, and the second follows from continuity of the finite product $M^{(r)}$.
The equality on the right uses the fact that the future of $\sigma z_\delta$ consists entirely of $\mathsf a$'s.
The limiting fiber at this position is therefore of the form $E M^{(r)}(\sigma z_\delta)$, with $E\in\mathcal E(\theta)$.

\paragraph{End of the block.}
At position $n-r$, with $r\ge0$ fixed, the next $\mathsf b$ is at position $r$ and every other $\mathsf b$ moves out of any fixed window.
Hence
$\sigma^{n-r}z_n\to\sigma^{-r}z_\delta$.
The remaining suffix is $\mathsf a^r\mathsf b$, so continuity of its fixed-length product gives
\begin{equation}\label{eq:end-limit}
\begin{aligned}
\sigma^{n-r}z_n&\longrightarrow\sigma^{-r}z_\delta,\\
M^{(r+1)}(\sigma^{n-r}z_n)&\longrightarrow M^{(r+1)}(\sigma^{-r}z_\delta).
\end{aligned}
\end{equation}
Since $P_n=M^{(n-r)}(z_n)M^{(r+1)}(\sigma^{n-r}z_n)$ and $\mathcal E_nP_n=\mathcal E_n$,
\begin{equation}\label{eq:end-circle}
\mathcal E_nM^{(n-r)}(z_n)
=\mathcal E_nM^{(r+1)}(\sigma^{n-r}z_n)^{-1}.
\end{equation}
Thus the end family has fiber coordinate $E M^{(r+1)}(\sigma^{-r}z_\delta)^{-1}$.

\paragraph{Bulk of the block.}
If both $j_i$ and $n_i-j_i$ tend to infinity, every fixed window around position $j_i$ eventually contains only $\mathsf a$'s.
Thus $\sigma^{j_i}z_{n_i}\to z_{\mathsf a}$.
Moreover,~\eqref{eq:bulk-limit} gives, along a subsequence with $j_i\omega_{\mathsf a}\to\xi$,
\begin{equation*}
\sigma^{j_i}z_{n_i}\longrightarrow z_{\mathsf a},\qquad
M^{(j_i)}(z_{n_i})\longrightarrow GR_\xi G^{-1}.
\end{equation*}
The phase $\theta$ records the length of the complete period, while $\xi$ records the position inside its long $\mathsf a$-block.
Irrationality of $\omega_{\mathsf a}/\pi$ allows every $\xi\in\T^1$ with both distances tending to infinity, as verified in the proof below.
Thus the bulk family includes every fiber coordinate $EGR_\xi G^{-1}$, with $E\in\mathcal E(\theta)$ and $\xi\in\T^1$.

Continuity of $h$ gives the limits of the complete time intervals between returns.
For $\theta\in I_0$, define $$\mathcal K(\theta)=\overline{\mathcal K^{begin}(\theta)\cup \mathcal K^{end}(\theta)\cup\mathcal K^{bulk}(\theta)}$$ as the closure of the union of the beginning, end, and bulk families:
\begin{align}
\mathcal K^{begin}(\theta)=&\left\{
 g_tC\bigl(\sigma^{r+1}z_\delta,E M^{(r)}(\sigma z_\delta)\bigr)\Z^3:
 \begin{array}{l}
 E\in\mathcal E(\theta),\ r\in\N_0,\\
 0\le t\le h(\sigma^{r+1}z_\delta)
 \end{array}
\right\},
\label{eq:limit-beginning}\\
\mathcal K^{end}(\theta)=&\left\{
 g_tC\bigl(\sigma^{-r}z_\delta,E M^{(r+1)}(\sigma^{-r}z_\delta)^{-1}\bigr)\Z^3:
 \begin{array}{l}
 E\in\mathcal E(\theta),\ r\in\N_0,\\
 0\le t\le h(\sigma^{-r}z_\delta)
 \end{array}
\right\},
\\
\mathcal K^{bulk}(\theta)=&\left\{
 g_tC\bigl(z_{\mathsf a},EGR_\xi G^{-1}\bigr)\Z^3:
 E\in\mathcal E(\theta),\ \xi\in\T^1,\ 0\le t\le h(z_{\mathsf a})
\right\}.
\label{eq:limit-bulk}
\end{align}

The following proposition shows that these positional limits determine the Hausdorff limit of the orbit closures.

\begin{proposition}
    \label{prop:Hausdorff-limit}
For every $\theta\in I_0$ and every sequence of positive integers $(n_i)_{i\ge1}$ satisfying
\[
n_i\to\infty,
\qquad
n_i\omega_{\mathsf a}\to \theta\text{ in }\T^1,
\qquad
n_i\text{ admissible},
\]
one has
\begin{equation*}
d_H(\mathcal O_{n_i},\mathcal K(\theta))\to0.
\end{equation*}
The sets $\mathcal K(\theta)$ are compact and flow-invariant, and the map $\theta\mapsto\mathcal K(\theta)$ is Hausdorff continuous on $I_0$.
\end{proposition}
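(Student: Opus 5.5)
The plan is to prove the Hausdorff convergence by establishing two inclusions: every point of $\mathcal O_{n_i}$ is eventually close to $\mathcal K(\theta)$, and every point of $\mathcal K(\theta)$ is approximated by points of $\mathcal O_{n_i}$. Compactness and flow invariance of $\mathcal K(\theta)$ then follow, and continuity in $\theta$ is obtained by a diagonal argument.

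The first step is to make the elliptic groups converge. Since $G^{-1}P_{n_i}G\to R_\theta B$ by \eqref{eq:period-asymptotic} and $R_\theta B$ is elliptic, the fixed point of $P_{n_i}$ in $\mathbb H$ converges to that of $GR_\theta BG^{-1}$. Hence $\mathcal E_{n_i}$, which is the full circle group by irrationality (Proposition~\ref{prop:admissible}), converges in Hausdorff distance to $\mathcal E(\theta)$. All these groups lie in a fixed compact subset of $\SL_2(\R)$.

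\textbf{Upper inclusion.} Take a point of $\mathcal O_{n_i}$ written as $g_rC(\sigma^jz_{n_i},EM^{(j)}(z_{n_i}))\Z^3$ as in \eqref{eq:On-explicit}. Argue by contradiction: suppose some subsequence of such points stays at distance $\ge\epsilon$ from $\mathcal K(\theta)$. Pass to a further subsequence on which one of three cases holds.
\begin{enumerate}
\item $j_i=r$ is fixed. Use \eqref{eq:beginning-limit}.
\item $n_i-j_i=r$ is fixed. Rewrite the fiber via \eqref{eq:end-circle} as $E'M^{(r+1)}(\sigma^{n-r}z_n)^{-1}$ with $E'\in\mathcal E_{n_i}$, then use \eqref{eq:end-limit}.
\item Both $j_i\to\infty$ and $n_i-j_i\to\infty$. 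Use \eqref{eq:bulk-limit} with $j_i\omega_{\mathsf a}\to\xi$.
\end{enumerate}
In every case, extract convergent subsequences of $E$ and of $r\in[0,h]$, using continuity of $C$, $h$ and the flow. The limit point lies in one of the three families, which contradicts the assumption.

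\textbf{Lower inclusion.} Given a point of $\mathcal K^{begin}(\theta)$ or $\mathcal K^{end}(\theta)$, approximate $E\in\mathcal E(\theta)$ by elements of $\mathcal E_{n_i}$ and take $j=r$, respectively $j=n_i-r$, together with \eqref{eq:end-circle}. For a bulk point with phase $\xi$, irrationality of $\omega_{\mathsf a}/\pi$ gives $j_i$ with $j_i\omega_{\mathsf a}\to\xi$. The constraint that both $j_i$ and $n_i-j_i\to\infty$ is handled by equidistribution: choose $j_i$ in the middle third of $[0,n_i]$, so that the required equidistribution window has length $\to\infty$. Then \eqref{eq:partial-rotation} applies with error $2\cdot600^{-(n_i-j_i)}\to0$. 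Since the families are dense in $\mathcal K(\theta)$, this inclusion passes to the closure.

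Compactness of $\mathcal K(\theta)$ follows because it is a Hausdorff limit of compact sets inside a fixed compact region: all fiber matrices are bounded by \eqref{eq:partial-uniform-bound} and the base coordinates lie in $\mathcal H$. Flow invariance holds because $\mathcal K(\theta)$ is a limit of invariant sets and the flow is continuous: if $x_i\to x$ with $x_i\in\mathcal O_{n_i}$, then $g_tx_i\in\mathcal O_{n_i}$ converges to $g_tx$.

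For continuity in $\theta$, note that $\mathcal E(\theta)$ depends continuously on $\theta$, since $H_\theta$ is continuous. The three families are images of $\mathcal E(\theta)\times(\text{parameter sets})$ under maps independent of $\theta$. The difficulty is that the index $r\in\N_0$ is unbounded, so one needs uniformity in $r$. As $r\to\infty$, the beginning and end points converge to the bulk family: $\sigma^{r+1}z_\delta\to z_{\mathsf a}$, and the fibers $EGR_{r\omega_{\mathsf a}}G^{-1}$, respectively $EM^{(r+1)}(\sigma^{-r}z_\delta)^{-1}=E\,G B_r^{-1}R_{-r\omega_{\mathsf a}}G^{-1}$, lie within $O(400^{-r})$ of $\mathcal E(\theta)G\,\mathrm{SO}(2)\,G^{-1}$-type sets uniformly in $E$. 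So for large $r$ these points are uniformly close to $\mathcal K^{bulk}(\theta)$, and it suffices to control finitely many $r$. On that finite set, together with the bulk family, uniform continuity on compacts gives continuity in $d_H$.

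The main obstacle is the same uniformity-in-$r$ issue in the upper inclusion. When $j_i$ (or $n_i-j_i$) is bounded but not constant, one must pass to a constant subsequence; when it is unbounded, the point must fall into the bulk case. This is handled by a careful three-way subsequence split on $(j_i,n_i-j_i)$.
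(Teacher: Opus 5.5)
Your proposal is correct and follows essentially the same route as the paper: convergence of the circle groups $\mathcal E_{n_i}\to\mathcal E(\theta)$ via the elliptic fixed points, a three-way subsequence split of positions (beginning, end, bulk) for the upper inclusion, and fixed positions plus the irrational rotation $j\omega_{\mathsf a}$ for the lower inclusion, with compactness, invariance and continuity then read off from these. The differences are cosmetic: you use equidistribution in the middle third instead of bounded gaps, and you truncate explicitly in $r$ for continuity in $\theta$. You could even skip that truncation, since $\mathcal E(\theta)$ enters only through left multiplication by $\operatorname{diag}(1,E)$, which commutes with $g_t$ and, for a right-invariant metric, displaces every point of $X_3$ by the same small amount.
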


\begin{proof}
Fix an admissible sequence $n_i\to\infty$ with $n_i\omega_{\mathsf a}\to \theta\in I_0$.
By~\eqref{eq:period-asymptotic}, $G^{-1}P_{n_i}G\to R_{\theta}B$.
Choose elliptic conjugators $H_i\to H_\theta$ with
$H_iG^{-1}P_{n_i}GH_i^{-1}=R_{\beta_i}$.
Admissibility gives
\[
\mathcal E_{n_i}=\{GH_i^{-1}R_\xi H_iG^{-1}:\xi\in\T^1\},
\]
so the parametrizations converge uniformly and
\begin{equation*}
d_H(\mathcal E_{n_i},\mathcal E(\theta))\to0.
\end{equation*}
Together with~\eqref{eq:partial-uniform-bound} and the return-time bounds, this places all $\mathcal O_{n_i}$ in one compact subset of $X_3$.

For any sequence $x_i\in\mathcal O_{n_i}$, choose representations in~\eqref{eq:On-explicit} with positions $j_i$.
After passing to a subsequence, one of three cases holds:
$j_i=r$ is fixed; $n_i-j_i=r$ is fixed; or both $j_i,n_i-j_i$ tend to infinity.
The beginning limit~\eqref{eq:beginning-limit}, the end identities~\eqref{eq:end-limit}--\eqref{eq:end-circle}, and the bulk limit~\eqref{eq:bulk-limit}, respectively, place every accumulation point in $\mathcal K(\theta)$.
This proves the upper Hausdorff inclusion.

Conversely, fixed positions approximate each point of the boundary families.
For a bulk point with phase $\xi$ and matrix $E\in\mathcal E(\theta)$, choose $E_i\in\mathcal E_{n_i}$ with $E_i\to E$.
Irrational rotation has uniformly bounded gaps between visits to any open arc: density of its forward orbit and compactness of $\T^1$ give a finite cover by inverse translates of that arc.
Consequently one can choose $j_i$ with
\[
j_i\to\infty,\qquad n_i-j_i\to\infty,\qquad j_i\omega_{\mathsf a}\to\xi.
\]
Equation~\eqref{eq:bulk-limit} then gives $\sigma^{j_i}z_{n_i}\to z_{\mathsf a}$ and $E_iM^{(j_i)}(z_{n_i})\to EGR_\xi G^{-1}$, together with convergence of the return times.
Thus all three families, and their closure, are approximated by $\mathcal O_{n_i}$.

The beginning products are powers of $M(z_{\mathsf a})$, so they are uniformly bounded.
The suffix products and their inverses are uniformly bounded by the convergence rate in~\eqref{eq:defect-matrix}.
Hence all three defining families have uniformly bounded representatives and inverses when $\theta$ ranges over a compact subinterval of $I_0$; this proves compactness of $\mathcal K(\theta)$.
Its flow invariance follows from the Hausdorff convergence of the invariant sets $\mathcal O_{n_i}$.
Finally, the only dependence on $\theta$ in~\eqref{eq:limit-beginning}--\eqref{eq:limit-bulk} is through the continuously parametrized circle $\mathcal E(\theta)$.
The remaining factors are uniformly bounded, which proves Hausdorff continuity.
\end{proof}



\subsection{The invariant three-dimensional bulk}

The bulk family $\mathcal K^{bulk}(\theta)$ has two circle parameters, from the complete period and the position inside the $\mathsf a$-block.
We verify that these directions, together with flow time, give an immersion of rank three. 

Write\begin{equation*}\mathcal E_{\mathsf a}=\{GR_\xi G^{-1}:\xi\in\T^1\},\quad\Lambda_{\mathsf a}=C(z_{\mathsf a},I_2)\Z^3,\quad \mathcal O_{\mathsf a}=\overline{\{g_t\Lambda_{\mathsf a}:t\ge0\}}.\end{equation*}  The constant-branch return and irrationality of $\omega_{\mathsf a}/\pi$ give\begin{equation}\label{eq:constant-torus}\mathcal O_{\mathsf a}=\{g_tC(z_{\mathsf a},K)\Z^3:K\in\mathcal E_{\mathsf a},\ 0\le t\le h(z_{\mathsf a})\}.\end{equation}The same proof as Theorem \ref{thm:invariant-two-tori} shows that $\mathcal O_{\mathsf a}$ is also an embedded two-torus.

By Equations \eqref{eq:limit-bulk} and ~\eqref{eq:constant-torus}, $\mathcal K^{bulk}(\theta)$ can also be described by
\begin{equation}\label{eq:bulk-as-tori}
\mathcal K^{bulk}(\theta)
=\bigcup_{E\in\mathcal E(\theta)}\operatorname{diag}(1,E)\mathcal O_{\mathsf a}
\subset\mathcal K(\theta).
\end{equation}

\begin{theorem}\label{thm:invariant-three-tori}
For every $\theta\in I_0$, the bulk $\mathcal K^{bulk}(\theta)$ is the image of the smooth immersion
\begin{equation}\label{eq:bulk-immersion}
\mathcal E(\theta)\times\mathcal O_{\mathsf a}\longrightarrow X_3:\quad
(E,\Lambda)\longmapsto\operatorname{diag}(1,E)\Lambda.
\end{equation}
Its domain is a compact three-torus, and its image is invariant under $g_t$.
For each fixed $E$, the image of $\{E\}\times\mathcal O_{\mathsf a}$ is an embedded invariant two-torus on which every orbit is dense.
\end{theorem}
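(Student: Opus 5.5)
The plan is to view everything inside the abelian group
\[
\mathcal B=\{g_t\operatorname{diag}(1,E):t\in\R,\ E\in\SL_2(\R)\}.
\]
Since $g_t$ commutes with $\operatorname{diag}(1,E)$, the map \eqref{eq:bulk-immersion} is the restriction of the smooth action map $\SL_2(\R)\times X_3\to X_3$. Its image is exactly $\bigcup_{E\in\mathcal E(\theta)}\operatorname{diag}(1,E)\mathcal O_{\mathsf a}$, which equals $\mathcal K^{bulk}(\theta)$ by \eqref{eq:bulk-as-tori}.

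\textbf{Domain and invariance.} The domain is a product of the circle $\mathcal E(\theta)$ and the two-torus $\mathcal O_{\mathsf a}$, so it is a compact three-torus. Invariance under $g_t$ is immediate: $g_t\operatorname{diag}(1,E)\Lambda=\operatorname{diag}(1,E)g_t\Lambda$, and $\mathcal O_{\mathsf a}$ is $g_t$-invariant. For fixed $E$, left translation by $\operatorname{diag}(1,E)$ is a diffeomorphism of $X_3$ commuting with the flow. Hence the image of $\{E\}\times\mathcal O_{\mathsf a}$ is an embedded invariant two-torus on which every orbit is dense, inheriting these properties from $\mathcal O_{\mathsf a}$ exactly as in Theorem~\ref{thm:invariant-two-tori}.

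\textbf{Immersion.} The main point is that the differential has rank three. Parametrize $\mathcal O_{\mathsf a}$ near a point $\operatorname{diag}(1,K)g_s\Lambda_{\mathsf a}$ with $K\in\mathcal E_{\mathsf a}$, and write $E=GH_\theta^{-1}R_\eta H_\theta G^{-1}$. The map then becomes
\[
(\eta,\xi,s)\mapsto \operatorname{diag}\bigl(1,GH_\theta^{-1}R_\eta H_\theta G^{-1}\,GR_\xi G^{-1}\bigr)\,g_s\Lambda_{\mathsf a}.
\]
This is the orbit map of the point $\Lambda_{\mathsf a}$ under a three-parameter family in $\mathcal B$. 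Because the stabilizer of a lattice in $\SL_3(\R)$ is discrete, the orbit map $\mathcal B\to X_3$, $b\mapsto b\Lambda$, is an immersion of the group $\mathcal B$. It therefore suffices to show that the three tangent vectors in $\mathrm{Lie}(\mathcal B)=\R\oplus\mathfrak{sl}_2(\R)$ are linearly independent at every point.

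\textbf{Linear independence.} Pull back by right translation to the identity; this uses that $\mathcal B$ is a direct product and the $g_t$-factor is central. The three tangent vectors become:
\begin{itemize}
\item $\partial_s$: the generator of $g_t$, which lies in the $\R$ factor;
\item $\partial_\xi$: the element $X_{\mathsf a}=GJG^{-1}$ with $J=\left(\begin{smallmatrix}0&-1\\1&0\end{smallmatrix}\right)$;
\item $\partial_\eta$: the element $X_\theta=GH_\theta^{-1}JH_\theta G^{-1}$, which is conjugated by an element of $\mathcal E_{\mathsf a}$ through the right translation but stays in the $\mathfrak{sl}_2$ factor.
\end{itemize}
So rank three amounts to the claim that $X_\theta$ and $\operatorname{Ad}(K)X_{\mathsf a}=X_{\mathsf a}$ are not proportional in $\mathfrak{sl}_2$. (Here $K\in\mathcal E_{\mathsf a}$ fixes $X_{\mathsf a}$.)

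The main obstacle is exactly this non-proportionality. The two generators are proportional iff their compact subgroups coincide, i.e.\ iff $H_\theta^{-1}JH_\theta$ and $J$ are proportional, i.e.\ iff $H_\theta\in\SO(2)$ up to sign. In that case $R_\theta B$ would be conjugate to a rotation by a rotation, hence itself in $\SO(2)$, which forces $B\in\SO(2)$. This contradicts Lemma~\ref{lem:nonorthogonal-defect}. Equivalently, the elliptic fixed points of $R_\theta B$ and of $I$-conjugation, namely $i\in\mathbb H$ for $\SO(2)$, differ.

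This conclusion is uniform in $\theta\in I_0$ and in the point of the domain. The map is therefore a smooth immersion everywhere, which completes the proof.
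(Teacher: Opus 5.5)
Your proof is correct and follows essentially the same route as the paper. You identify the image via \eqref{eq:bulk-as-tori} and trivialize tangent vectors by translation, so that the time direction splits off through the central $g_t$-factor; rank three then reduces to non-proportionality of the generators of $\mathcal E(\theta)$ and $\mathcal E_{\mathsf a}$, and proportionality would force $B\in\SO(2)$, contradicting Lemma~\ref{lem:nonorthogonal-defect}. There are two harmless slips: $\mathcal B\cong\R\times\SL_2(\R)$ is not abelian (you only ever use that the $g_t$-factor is central), and the vectors you list, $\operatorname{Ad}(K^{-1})X_\theta$ and $X_{\mathsf a}$, come from left rather than right trivialization (right translation gives $X_\theta$ and $\operatorname{Ad}(E)X_{\mathsf a}$), which leads to the same reduction.
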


\begin{proof}
Both $\mathcal E(\theta)$ and $\mathcal E_{\mathsf a}$ are circle subgroups of $\SL_2(\R)$, and they are distinct.
Indeed, equality would imply that their element $G(R_\theta B)G^{-1}$ belongs to $G\SO(2)G^{-1}$, forcing $B\in\SO(2)$, contrary to Lemma~\ref{lem:nonorthogonal-defect}.
Their one-dimensional Lie algebras are therefore distinct.
Choose nonzero generators $X_\theta$ and $X_{\mathsf a}$ of these Lie algebras.

Locally, write a point of $\mathcal O_{\mathsf a}$ as $g_t\operatorname{diag}(1,K)\Lambda_{\mathsf a}$, with $K\in\mathcal E_{\mathsf a}$.
A local lift of~\eqref{eq:bulk-immersion} to $\SL_3(\R)$ is
\[
(E,t,K)\longmapsto
g_t\operatorname{diag}(1,EK)C(z_{\mathsf a},I_2).
\]
After trivializing tangent vectors by invertible left and right translations, its two circle directions are represented in the lower block by
$K^{-1}X_\theta K$ and $X_{\mathsf a}$.
They are linearly independent: dependence would imply
$X_\theta\in\R X_{\mathsf a}$, since $K$ centralizes $X_{\mathsf a}$.
The time direction has a nonzero upper-left entry, whereas both circle directions have zero upper-left entry.
Thus the differential has rank three everywhere.
Projection to $X_3$ preserves this rank because $\SL_3(\Z)$ is discrete.

The domain is the product of a circle and the two-torus $\mathcal O_{\mathsf a}$.
Equation~\eqref{eq:bulk-as-tori} identifies the image, and commutation of $\operatorname{diag}(1,E)$ with $g_t$ proves invariance.
For each fixed $E$, left translation carries the embedded minimal two-torus $\mathcal O_{\mathsf a}$ to the stated embedded invariant two-torus.
\end{proof}

\subsection{Continuous maxima and parabolic degeneration}\label{sec:proper-maxima}

From now on, fix a continuous proper function $F:X_3\to[0,\infty)$ as in Theorem \ref{thm:proper-ray}.

For $E\in\SL_2(\R)$, $n$ admissible, and $\theta\in I_0$, write
\[
\begin{aligned}
\Delta_n(E)&=\max_{\Lambda\in\mathcal O_n}F\bigl(\operatorname{diag}(1,E)\Lambda\bigr),\\
\Delta(E,\theta)&=\max_{\Lambda\in\mathcal K(\theta)}F\bigl(\operatorname{diag}(1,E)\Lambda\bigr).
\end{aligned}
\]
Since $\operatorname{diag}(1,E)$ commutes with $g_t$, Theorem~\ref{thm:invariant-two-tori} gives
\[
\Delta_n(E)=\limsup_{t\to\infty}F\bigl(g_tC(z_n,E)\Z^3\bigr).
\]

We claim that $\Delta(E,\theta)$ is continuous with respect to the parameters, and converges to infinity as $\theta$ approaches an endpoint of $I_{\mathrm{ell}}$.

\begin{proposition}\label{prop:proper-maxima}
The function $(E,\theta)\mapsto\Delta(E,\theta)$ is continuous on $\SL_2(\R)\times I_0$.
If $n_i\to\infty$, $n_i\omega_{\mathsf a}\to\theta\in I_0$, and $n_i$ is admissible, then
\begin{equation}\label{eq:translated-depth-convergence}
\Delta_{n_i}(E)\longrightarrow\Delta(E,\theta),
\end{equation}
uniformly for $E$ in compact subsets of $\SL_2(\R)$.
Moreover, for $\theta\to\theta_*,\ \theta\in I_0$ we have
\begin{equation}\label{eq:proper-degeneration}
\Delta(E,\theta)\longrightarrow\infty
\end{equation}
uniformly for $E$ in compact subsets of $\SL_2(\R)$.
\end{proposition}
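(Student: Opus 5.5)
The plan has three parts, one for each assertion, and all three rest on Proposition~\ref{prop:Hausdorff-limit}.

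\textbf{Continuity.} The map $(E,\Lambda)\mapsto F(\operatorname{diag}(1,E)\Lambda)$ is continuous on $\SL_2(\R)\times X_3$. If $A\subset X_3$ is compact, then
\[
\Lambda\mapsto\max_{\Lambda'\in A}F(\operatorname{diag}(1,E)\Lambda')
\]
depends continuously on $E$ and on $A$ in the Hausdorff metric, as long as everything stays in a compact region. The reason is that $F$ is uniformly continuous on compact sets, and $\operatorname{diag}(1,E)$ acts as a uniformly continuous family of maps on compact subsets of $X_3$.

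Concretely, fix $(E_0,\theta_0)$, a compact neighbourhood $W$ of $E_0$, and a compact subinterval $J\ni\theta_0$ of $I_0$. By Proposition~\ref{prop:Hausdorff-limit}, the union $\bigcup_{\theta\in J}\mathcal K(\theta)$ lies in a compact set $\mathcal C$, and so does $\operatorname{diag}(1,W)\mathcal C$. Let $\omega$ be a modulus of continuity for $(E,\Lambda)\mapsto F(\operatorname{diag}(1,E)\Lambda)$ on $W\times\mathcal C$. Then
\[
|\Delta(E,\theta)-\Delta(E',\theta')|\le\omega\bigl(d(E,E')+d_H(\mathcal K(\theta),\mathcal K(\theta'))\bigr),
\]
and Hausdorff continuity of $\theta\mapsto\mathcal K(\theta)$ gives joint continuity.

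\textbf{Convergence \eqref{eq:translated-depth-convergence}.} The same estimate applies with $\mathcal O_{n_i}$ in place of $\mathcal K(\theta')$. The proof of Proposition~\ref{prop:Hausdorff-limit} places all $\mathcal O_{n_i}$ in one compact set, and $d_H(\mathcal O_{n_i},\mathcal K(\theta))\to0$. Because the modulus $\omega$ is uniform for $E\in W$, the convergence is uniform on compacta in $E$.

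\textbf{Degeneration \eqref{eq:proper-degeneration}.} This is the main obstacle. Since $F$ is proper, it suffices to exhibit points of $\mathcal K(\theta)$ that leave every compact set as $\theta\to\theta_*$, uniformly after translation by $\operatorname{diag}(1,E)$ with $E$ in a compact set. Such translation only changes distances by a bounded factor, so it cannot keep a divergent family inside a compact set.

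I would use the circle $\mathcal E(\theta)=GH_\theta^{-1}\SO(2)H_\theta G^{-1}\subset\mathcal K^{bulk}(\theta)$ through the fixed base point $C(z_{\mathsf a},\cdot)$. Take the points $C(z_{\mathsf a},EK)\Z^3$ with $K\in\mathcal E(\theta)$. As $\theta\to\theta_*$, the matrix $R_\theta B$ tends to the nontrivial parabolic $R_{\theta_*}B$. Its elliptic fixed point in $\mathbb H$ therefore escapes to the boundary, so $\|H_\theta\|\to\infty$, and the compact subgroup $\mathcal E(\theta)$ contains elements of norm tending to infinity. I would verify this explicitly by a rank-one perturbation computation. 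Writing the conjugate of the parabolic as $\pm\begin{pmatrix}1&1\\0&1\end{pmatrix}$ plus a small error, the fixed point has imaginary part of order $\sqrt{2-|\tr|}$, and the conjugated rotation group then has norms of order $(2-|\tr|)^{-1/2}$.

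The remaining issue is to show that unbounded $K\in\SL_2(\R)$ really produce short vectors in $C(z_{\mathsf a},EK)\Z^3=\operatorname{diag}(1,EK)C(z_{\mathsf a},I_2)\Z^3$. Take $K=k_1ak_2$ with $a=\operatorname{diag}(s,s^{-1})$ and $s\to\infty$. Then $\operatorname{diag}(1,EK)$ contracts the lower $\R^2$ along one direction by a factor $s^{-1}$.

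To turn this into a systole bound, I would apply Minkowski to the $2$-dimensional lattice obtained by projecting $C(z_{\mathsf a},I_2)\Z^3$ onto the slice: a fixed lattice with a thin convex strip gives lattice vectors whose lower block is $O(s^{-1/2})$-short in the contracted direction. If that turns out to be awkward, I would instead use the flow-invariance of $\mathcal K(\theta)$ and apply $g_t$ to balance the first coordinate. Either way, Mahler's criterion shows the points leave compacta, and $\max F\to\infty$. Uniformity in $E$ over compact sets follows because $\|E^{\pm1}\|$ is bounded there.
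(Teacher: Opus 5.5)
Your continuity and convergence arguments match the paper's: a common compact set, uniform continuity of $F$, and the Hausdorff statements of Proposition~\ref{prop:Hausdorff-limit}. Your starting point for \eqref{eq:proper-degeneration}, namely $\|H_\theta\|\to\infty$, is also the paper's. The gap is the step that actually produces a short vector.

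The Minkowski route fails for two reasons. First, the projection of $C(z_{\mathsf a},I_2)\Z^3$ to the lower $\R^2$ is $\{c(p-q\bar u)\}$, where $\bar u=u(z_{\mathsf a})$ has cubic-irrational coordinates, so it is dense rather than a lattice. Second, and more fundamentally, $\operatorname{diag}(1,K)$ and $g_t$ are unimodular, so a volume argument only ever yields vectors of length $O(1)$ and cannot detect degeneration. It is also not true in general that unbounded $K$ force short vectors: $\operatorname{diag}(1,s,s^{-1})$ lies in the diagonal group of $\SL_3(\R)$, which has compact orbits.

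The real obstacle is that $\operatorname{diag}(1,K)$ also \emph{expands} one direction by $s$. A lattice vector becomes short only if its lower block lies almost exactly in the contracting singular direction. Restricting to $K\in\mathcal E(\theta)$, as you do, does not achieve this. For a fixed $w$, the curve $\{H_\theta^{-1}R_\phi H_\theta w:\phi\in\T^1\}$ is an ellipse of minimal norm $\|H_\theta w\|/\|H_\theta\|$. That minimum is bounded below by the component of $w$ along the expanding singular direction of $H_\theta$. So the short-time balancing by $g_t$ in your fallback has nothing small to balance.

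The missing idea is an alignment step. The paper takes the explicit lattice vector coming from $e_1=(1,0,0)^T$, namely $(1+\bar v\bar u)^{-1/3}(1,-K\bar u)$, whose first coordinate is bounded. It then uses the second circle parameter of the bulk family, $GR_\xi G^{-1}\in\mathcal E_{\mathsf a}$, to rotate $G^{-1}\bar u$ into the contracting singular direction of $H_\theta^{-1}R_{\pi/2}H_\theta$. Equivalently, this parameter comes from the long-time recurrence $g_{kh(z_{\mathsf a})}\Lambda_{\mathsf a}=\operatorname{diag}(1,M(z_{\mathsf a})^k)\Lambda_{\mathsf a}$ on $\mathcal O_{\mathsf a}$.

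With $K=GH_\theta^{-1}R_{\pi/2}H_\theta R_{\xi_\theta}G^{-1}$, the least singular value of $H_\theta^{-1}R_{\pi/2}H_\theta$ is $\|H_\theta\|^{-2}$, so $\|K\bar u\|\lesssim\|H_\theta\|^{-2}$. Flowing for time $t_\theta=\frac23\log\|H_\theta\|$ balances the two coordinates and gives $\sys\lesssim\|H_\theta\|^{-4/3}\to0$. The uniformity in $E$ then follows as you say. Your flow-invariance fallback becomes a proof only once this alignment is supplied.
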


\begin{proof}
Proposition~\ref{prop:Hausdorff-limit} gives Hausdorff continuity of $\mathcal K(\theta)$ and convergence of $\mathcal O_{n_i}$ to $\mathcal K(\theta)$.
For $E$ in a fixed compact set and $\theta$ in a compact subinterval of $I_0$, the translated sets lie in a common compact subset of $X_3$.
Uniform continuity of $F$ on the compact subset proves joint continuity and~\eqref{eq:translated-depth-convergence}.

As $\theta\to\theta_*$, one has $\|H_\theta\|\to\infty$: otherwise a convergent subsequence would conjugate the nontrivial parabolic matrix $R_{\theta_*}B$ to a rotation.
The least singular value of $H_\theta^{-1}R_{\pi/2}H_\theta$ is $\|H_\theta\|^{-2}$.

Write $\bar u=u(z_{\mathsf a})$.
Choose $\xi_\theta\in\T^1$ so that $R_{\xi_\theta}G^{-1}\bar u$ lies in the contracting singular direction, and put
\[
\Lambda_\theta=C\bigl(z_{\mathsf a},G H_\theta^{-1}R_{\pi/2}H_\theta R_{\xi_\theta}G^{-1}\bigr)\Z^3
\in\mathcal K(\theta).
\]
The image of $(1,0,0)^T$ under the chart gives
\[
\sys(g_t\Lambda_\theta)\le C\max\{e^{-2t},e^t\|H_\theta\|^{-2}\}
\qquad(t\ge0)
\] for a uniform constant $C$ that depends only on the norm and the compact set.
Taking $t_\theta=\frac23\log\|H_\theta\|$ and using flow invariance, we obtain
\[
g_{t_\theta}\Lambda_\theta\in\mathcal K(\theta),\qquad
\sys(g_{t_\theta}\Lambda_\theta)\le C\|H_\theta\|^{-4/3}\longrightarrow0.
\]

The same systole bound holds after left translation by $\operatorname{diag}(1,E)$, uniformly for $E$ in any fixed compact subset of $\SL_2(\R)$.
Every nonempty sublevel set of $F$ is compact and therefore has a positive lower bound for its systole.
Consequently,
\[
\Delta(E,\theta)\ge F\bigl(\operatorname{diag}(1,E)g_{t_\theta}\Lambda_\theta\bigr)\longrightarrow\infty
\]
uniformly on such compact sets, proving~\eqref{eq:proper-degeneration}.
\end{proof}

\section{Realizing a ray by concatenation}\label{sec:realization}

We now prove Theorem~\ref{thm:proper-ray} by concatenating repeated periodic blocks.
At each step, the next periodic maximum is adjusted to account for the current fiber matrix.
Its repetition length is chosen both to visit that maximum and to make the change in the terminal fiber summably small.
The uniform finite-word estimate then compares every return segment of the resulting orbit with the corresponding periodic segment.

\begin{proof}[Proof of Theorem~\ref{thm:proper-ray}]
Fix $\theta_0\in I_0$, put $T=\max\{1,\Delta(I_2,\theta_0)\}$, and let $A>T$.
We shall construct $z\in\Omega$ with $z(i)=\mathsf a$ for $i<0$ and
\[
z[0,\infty)=(\mathsf a^{n_1}\mathsf b)^{L_1}(\mathsf a^{n_2}\mathsf b)^{L_2}\cdots,
\qquad n_1<n_2<\cdots,
\]
such that
\begin{equation}\label{eq:prescribed-limsup}
\limsup_{t\to\infty}F\bigl(g_tC(z,I_2)\Z^3\bigr)=A.
\end{equation}

\paragraph{Step 1: Allowing a small shift in the fiber.}
By Proposition~\ref{prop:proper-maxima}, choose $\theta_+\in I_0$ with
\[
\Delta(I_2,\theta_0)<A<\Delta(I_2,\theta_+).
\]
Let $J\Subset I_0$ be the closed interval between $\theta_0$ and $\theta_+$.
For $r\ge0$, write
\[
B_r(I_2)=\{E\in\SL_2(\R):\|E-I_2\|\le r\}.
\]
By continuity, choose $0<\delta<1$ such that
\[
\Delta(E,\theta_0)<A<\Delta(E,\theta_+),\quad
\forall E\in B_\delta(I_2).
\]
For each such $E$, fix $\theta(E,A)\in\operatorname{int}J$ satisfying
\[
\Delta(E,\theta(E,A))=A.
\]
Approximating this phase by $n\omega_{\mathsf a}$ and using Proposition~\ref{prop:admissible} and~\eqref{eq:translated-depth-convergence}, we obtain: for every $E\in B_\delta(I_2)$, $\varepsilon>0$, and $N\ge1$, there is an admissible $n\ge N$ such that
\begin{equation}\label{eq:adjusted-depth-choice}
n\omega_{\mathsf a}\in J,\qquad |\Delta_n(E)-A|<\varepsilon.
\end{equation}

\paragraph{Step 2: A summable induction choice.}
Fix $e_i=2^{-(i+1)}$ for all $i\ge1$. We first choose some uniform constants. By Proposition \ref{prop:admissible} and~\eqref{eq:partial-uniform-bound}, we may choose $N_J\in\N$ and $K\ge1$ such that for every admissible $n\ge N_J$ with $n\omega_{\mathsf a}\in J$ and every $j\in\N_0$,
\[
\|M^{(j)}(z_n)\|+\|(M^{(j)}(z_n))^{-1}\|\le K.
\]Fix the separate constant
$
D=6K+3$.

For every $i\ge1$, fix $\rho_i\in(0,1)$ such that, for all $w,w'\in\Omega$ and $Q,Q'\in\SL_2(\R)$ with $\|Q\|,\|Q'\|\le 2K+1$, the condition
\[
\|u(w)-u(w^\prime)\|_\infty+\|v(w)-v(w^\prime)\|_\infty+\|Q-Q'\|<\rho_i
\]
implies $ |\max_{0\le t\le h(w)}F\bigl(g_tC(w,Q)\Z^3\bigr)-\max_{0\le t\le h(w^\prime)}F\bigl(g_tC(w^\prime,Q^\prime)\Z^3\bigr)|< e_i$.

We now choose $n_i,L_i\in\N$ and matrices
\[
E_i\in B_{e_i\delta}(I_2),\qquad F_i\in B_{(1-2e_i)\delta}(I_2)\qquad(i\ge1),
\]
where $E_i$ is the fiber multiplier of the $i$th periodic block and $F_i$ is the cumulative fiber matrix after the $i$th choice. We denote by $r_i$ the word length $r_i=\sum_{j=0}^i L_j(n_j+1)$

Set $n_0=L_0=r_0=0$ and $E_0=F_0=I_2$.

At step $i\ge1$, suppose that $n_j,L_j,r_j, E_j,F_j$ have been constructed for $0\le j<i$, with
\[
E_j\in B_{e_j\delta}(I_2),\qquad F_j\in B_{(1-2e_j)\delta}(I_2)\qquad(1\le j<i).
\]

Take $\theta_i=\theta(F_{i-1},A)$.
By~\eqref{eq:adjusted-depth-choice}, choose an admissible integer $n_i>\max\{n_{i-1},N_J\}$ such that
\[
n_i\omega_{\mathsf a}\in J,\qquad |\Delta_{n_i}(F_{i-1})-A|<e_i,\qquad 600^{-n_i}<\frac{\min\{\delta e_{i+1},\rho_{i+1}\}}{D}.
\]
For this $n_i$, Theorem~\ref{thm:invariant-two-tori} and Proposition~\ref{prop:admissible} give, with $L\in\N$,
\[
\lim_{L\to\infty}\max_{0\le t\le L\tau_{n_i}}F\bigl(g_tC(z_{n_i},F_{i-1})\Z^3\bigr)=\Delta_{n_i}(F_{i-1}),\qquad \liminf_{L\to\infty}\|P_{n_i}^{L}-I_2\|=0.
\]
Choose an integer $L_i\ge i$ such that
\begin{equation}\label{eq:chosen-repetition-length}
|\max_{0\le t\le L_i\tau_{n_i}}F\bigl(g_tC(z_{n_i},F_{i-1})\Z^3\bigr)-\Delta_{n_i}(F_{i-1})|<e_i,\qquad D\|P_{n_i}^{L_i}-I_2\|<\delta e_i.
\end{equation}

Take
$
E_i=P_{n_i}^{L_i},$ and $ r_i=r_{i-1}+L_i(n_i+1).$ Then $E_i\in B_{e_i\delta}(I_2)$.

Let $z^{(i)}\in\Omega$ be the bi-infinite word with all negative symbols equal to $\mathsf a$ and future
\[
z^{(i)}[0,\infty)=(\mathsf a^{n_1}\mathsf b)^{L_1}\cdots(\mathsf a^{n_i}\mathsf b)^{L_i}(\mathsf a^{n_i}\mathsf b)^\infty,
\]
and define $F_i=M^{(r_i)}(z^{(i)})$.
We note that $F_1=E_1$ and for $i\ge2$, since $z^{(i-1)}$ agrees with $z^{(i)}$ up to the first $r_{i-1}+n_{i-1}$ symbols, by Lemma~\ref{lem:fiber-bound},
\[
\|F_i-F_{i-1}E_i\|\le \|F_{i-1}\|\|E_i\|600^{-n_{i-1}}\le4\,600^{-n_{i-1}}.
\]
Summation yields
\[
\begin{aligned}
\|F_i-I_2\|&\le D\sum_{j=1}^{i-1}600^{-n_j}+D\sum_{j=1}^i\|E_j-I_2\|\\
&<\delta\sum_{j=2}^ie_j+\delta\sum_{j=1}^ie_j<2\delta\sum_{j=1}^ie_j=(1-2e_i)\delta.
\end{aligned}
\]
This shows that $F_i\in B_{(1-2e_i)\delta}(I_2)$.

\paragraph{Step 3: Comparing the entire word.}
Let $s_i=T_{r_i}(z)$ for $i\ge0$ and $s_0=0$.
Fix $i\ge2$.
Since $z$ and $z^{(i-1)}$ agree through the first $r_{i-1}+n_{i-1}$ symbols, Lemma~\ref{lem:fiber-bound} gives
\[
\|M^{(r_{i-1})}(z)-F_{i-1}\|\le2\cdot \,600^{-n_{i-1}}.
\]
For each integer $j$ with $0\le j<L_i(n_i+1)$, put
\[
w=\sigma^{r_{i-1}+j}z,\quad w'=\sigma^jz_{n_i},\quad Q=M^{(r_{i-1}+j)}(z),\quad Q'=F_{i-1}M^{(j)}(z_{n_i}).
\]
The futures agree through the remaining block and $n_i$ further $\mathsf a$ symbols, so
\[
\|M^{(j)}(\sigma^{r_{i-1}}z)-M^{(j)}(z_{n_i})\|\le K\cdot\,600^{-n_i},\qquad \|M^{(j)}(\sigma^{r_{i-1}}z)\|\le2K.
\] 
The cocycle identity and $\|F_{i-1}\|\le2$ therefore give
\[
\|Q-Q'\|\le4K\cdot600^{-n_{i-1}}+2K\cdot\,600^{-n_i}\le6K\cdot\,600^{-n_{i-1}}
\] which implies that $\|Q'\|,\|Q\|\le2K+1$.

The common past and \eqref{eq:word-closeness} yield $\|u(w)-u(w^\prime)\|_\infty+\|v(w)-v(w^\prime)\|_\infty\le3\cdot600^{-n_{i-1}}.$

Thus we have
\[\|u(w)-u(w^\prime)\|_\infty+\|v(w)-v(w^\prime)\|_\infty+\|Q-Q'\|\le D\cdot 600^{-n_{i-1}}<\rho_i.\]

By the definition of $\rho_i$ and~\eqref{eq:factor-map}, we therefore obtain, for every $i\ge2$ and every integer $j$ with $0\le j<L_i(n_i+1)$,
\begin{equation}\label{eq:return-continuity}
\left|\max_{T_{r_{i-1}+j}(z)\le t\le T_{r_{i-1}+j+1}(z)}F(g_tC(z,I_2)\Z^3)-\max_{T_j(z_{n_i})\le t\le T_{j+1}(z_{n_i})}F\bigl(g_tC(z_{n_i},F_{i-1})\Z^3\bigr)\right|<e_i.
\end{equation}
Taking the maximum over $0\le j<L_i(n_i+1)$ in~\eqref{eq:return-continuity} gives
\[
\left|\max_{s_{i-1}\le t\le s_i}F(g_tC(z,I_2)\Z^3)-\max_{0\le t\le L_i\tau_{n_i}}F\bigl(g_tC(z_{n_i},F_{i-1})\Z^3\bigr)\right|<e_i.
\]
By~\eqref{eq:chosen-repetition-length},
together with $|\Delta_{n_i}(F_{i-1})-A|<e_i$, this yields
\[
\left|\max_{s_{i-1}\le t\le s_i}F(g_tC(z,I_2)\Z^3)-A\right|<3e_i.
\]
Since $s_i\ge r_i\to\infty$ and $e_i\to0$, these estimates prove~\eqref{eq:prescribed-limsup}; the first block does not affect the limit.

\paragraph{Step 4: Forgetting the past.}
Put $x=u(z)$ and $\iota=1+v(z)u(z)$.
The chart formula gives
\[
C(z,I_2)
=g_{-\frac13\log\iota}
\begin{pmatrix}1&v(z)/\iota\\0&I_2\end{pmatrix}h_x,
\]
while
\[
g_t\begin{pmatrix}1&v(z)/\iota\\0&I_2\end{pmatrix}g_{-t}
=\begin{pmatrix}1&e^{-3t}v(z)/\iota\\0&I_2\end{pmatrix}
\longrightarrow I_3.
\]
Thus, after a fixed time translation, the two flowing lattices differ by left multiplication by matrices tending to the identity.
Since $F$ is proper, Equation~\eqref{eq:prescribed-limsup} shows the forward orbit is in a compact set, so uniform continuity of $F$ gives
\[
\limsup_{t\to\infty}F(g_th_x\Z^3)=A.
\]
\end{proof}

\subsection{Level sets are uncountable}
From our construction, we also see that the level sets are uncountable in a Hall ray. 
\begin{proposition}\label{prop:uncountable-levels}
For every height function $F$, the constant $T$ in
Theorem~\ref{thm:proper-ray} can be chosen so that, for every $A>T$,
the set
\[
\left\{x\in\R^2:
\limsup_{t\to\infty}F(g_th_x\Z^3)=A
\right\}
\]
is uncountable.
Consequently, every level $0<\rho<\rho_0$ in
Theorem~\ref{thm:main} is attained by uncountably many vectors.
\end{proposition}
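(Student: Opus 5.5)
The plan is to keep the constant $T=\max\{1,\Delta(I_2,\theta_0)\}$ from the proof of Theorem~\ref{thm:proper-ray}. I would rerun the inductive construction of Steps 1--4 for a fixed $A>T$, this time exploiting the freedom it leaves at each step. The choices are organized along a binary tree indexed by finite $0/1$ strings. At step $i$, the integer $n_i$ was only required to be admissible, to exceed $\max\{n_{i-1},N_J\}$, and to satisfy $n_i\omega_{\mathsf a}\in J$, $|\Delta_{n_i}(F_{i-1})-A|<e_i$ and $600^{-n_i}<\min\{\delta e_{i+1},\rho_{i+1}\}/D$. By \eqref{eq:adjusted-depth-choice}, applied with $N$ arbitrarily large, there are infinitely many such integers. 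So at every node I would pick two distinct admissible values $n_i^{(0)}<n_i^{(1)}$, both satisfying all these conditions. For each of them I then choose $L_i$ as in \eqref{eq:chosen-repetition-length}, which determines $E_i$ and $F_i$. Because $F_{i-1}$ depends on the earlier choices, the construction is genuinely a tree: each node carries its own $F_{i-1}$, $\theta_i=\theta(F_{i-1},A)$ and candidate pair.

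For every infinite branch $\epsilon\in\{0,1\}^{\N}$, this yields a word $z_\epsilon$ with all negative symbols $\mathsf a$ and future $(\mathsf a^{n_1}\mathsf b)^{L_1}(\mathsf a^{n_2}\mathsf b)^{L_2}\cdots$. The verification in Steps 2--4 used only the inequalities imposed at each step, never any particular choice among the valid options. Hence the bound $F_i\in B_{(1-2e_i)\delta}(I_2)$, the comparison estimate \eqref{eq:return-continuity}, the resulting block estimate $\bigl|\max_{s_{i-1}\le t\le s_i}F(g_tC(z_\epsilon,I_2)\Z^3)-A\bigr|<3e_i$, and the final passage to $h_{x}\Z^3$ all hold verbatim along every branch. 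Therefore $x_\epsilon=u(z_\epsilon)$ satisfies $\limsup_{t\to\infty}F(g_th_{x_\epsilon}\Z^3)=A$.

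It remains to show that $\epsilon\mapsto x_\epsilon$ is injective, which gives continuum many points in the level set. Let two branches first differ at step $i$, so they share the prefix through block $i-1$. The words then agree up to the start of block $i$ and then read $\mathsf a^{n_i^{(0)}}\mathsf b$ versus $\mathsf a^{n_i^{(1)}}\mathsf b$. Since $n_i^{(0)}\ne n_i^{(1)}$, the futures differ at the position of the first $\mathsf b$ in block $i$. Now $u(z)$ lies in the nested cylinder $\Phi_{z(0)}\circ\cdots\circ\Phi_{z(m)}(U)$. By Lemma~\ref{lem:branches} the maps $\Phi_s$ are injective with $\Phi_{\mathsf a}(U)\cap\Phi_{\mathsf b}(U)=\emptyset$, so cylinders of the same length with different words are disjoint. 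Hence distinct futures give distinct points $u(z)$, and $x_\epsilon\neq x_{\epsilon'}$.

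For the consequence, apply Lemma~\ref{lem:dani} with $F=1/\sys$ exactly as in the deduction of Theorem~\ref{thm:main}. Each $x_\epsilon$ with $A=\rho^{-2/3}$ has $\liminf_q q^{1/2}\|qx_\epsilon\|_{\T^2}=\rho>0$, which forces $x_\epsilon\notin\Q^2$ because rational vectors give $0$. This yields uncountably many vectors at each level $\rho<\rho_0$.

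The only delicate point, and the step I expect to be the main obstacle, is confirming that the tree construction is consistent. One has to check that every estimate in Steps 2--3 was uniform over the allowed choices. In particular, the constants $K$, $D$, $\rho_i$ and $N_J$ must be fixed before any branching, and the two candidate values of $n_i$ must both satisfy the bound $600^{-n_i}<\min\{\delta e_{i+1},\rho_{i+1}\}/D$ that is used at step $i+1$. Both requirements hold because those constants depend only on $J$, $F$ and $i$.
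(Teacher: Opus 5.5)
Your proposal is correct and follows essentially the paper's argument: you build a binary tree of valid choices inside the inductive construction of Theorem~\ref{thm:proper-ray}, Steps 3--4 then apply verbatim along each branch, and injectivity comes from the injective branches $\Phi_{\mathsf a},\Phi_{\mathsf b}$ with disjoint images. The only difference is that you branch on $n_i$, using that \eqref{eq:adjusted-depth-choice} holds for every $N$, whereas the paper branches on $L_i$, using that infinitely many $L_i$ satisfy \eqref{eq:chosen-repetition-length}; either choice immediately yields distinct futures and hence distinct points $u(z)$.
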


\begin{proof}
Fix $T$ as in the proof of Theorem~\ref{thm:proper-ray}, and fix any $A>T$. We follow the construction in
Section~\ref{sec:realization}.
The two limits immediately preceding Equation~\eqref{eq:chosen-repetition-length} show that, after choosing $n_i$, there are infinitely many integers $L_i\ge i$ satisfying
\eqref{eq:chosen-repetition-length}.
Retain two distinct such choices at each stage and continue the induction separately along each branch.
Every infinite branch produces a word
\[
z[0,\infty)
=(\mathsf a^{n_1}\mathsf b)^{L_1}
 (\mathsf a^{n_2}\mathsf b)^{L_2}\cdots,
\qquad n_1<n_2<\cdots,
\]
where all negative symbols are $\mathsf a$.
Steps~3--4 of that proof apply to every branch and give
$\limsup_{t\to\infty}F(g_th_{u(z)}\Z^3)=A$.

Distinct branches give distinct futures $z[0,\infty)$.
The injectivity and disjointness of the maps $\Phi_s$ in Lemma~\ref{lem:branches} imply that distinct futures give distinct vectors $u(z)$.
We therefore obtain an injection of $\{0,1\}^{\N}$ into the level set $\{u:\limsup_{t\to\infty}F(g_th_{u}\Z^3)=A\}$.

Finally, take $F=1/\sys$ and $\rho_0=T^{-3/2}$. Lemma~\ref{lem:dani}, with $A=\rho^{-2/3}$, shows that the level set $\{\theta:\liminf_{q\to\infty}q^{1/2}\|q\theta\|_{\T^2}=\rho\}$ is uncountable.
\end{proof}


\begin{thebibliography}{99}

\bibitem{Agin}
A. Agin,
Constructing displacement vectors,
\emph{Comb. Number Theory} \textbf{13} (2024), no.~3, 239--264.
\href{https://doi.org/10.2140/cnt.2024.13.239}{doi:10.2140/cnt.2024.13.239}.

\bibitem{AginWeiss}
A. Agin and B. Weiss,
The Dirichlet spectrum,
to appear in \emph{Selecta Math. (N.S.)},
\href{https://arxiv.org/abs/2412.05858}{arXiv:2412.05858} (2024).

\bibitem{Akhunzhanov}
R. K. Akhunzhanov,
Vectors of a given Diophantine type. II,
\emph{Sb. Math.} \textbf{204} (2013), no.~4, 463--484.
\href{https://doi.org/10.1070/SM2013v204n04ABEH004308}{doi:10.1070/SM2013v204n04ABEH004308}.

\bibitem{AkhunzhanovMoshchevitin}
R. K. Akhunzhanov and N. G. Moshchevitin,
Vectors of given Diophantine type,
\emph{Math. Notes} \textbf{80} (2006), no.~3--4, 318--328.
\href{https://doi.org/10.1007/s11006-006-0143-2}{doi:10.1007/s11006-006-0143-2}.

\bibitem{AkhunzhanovShatskov}
R. K. Akhunzhanov and D. O. Shatskov,
On Dirichlet spectrum for two-dimensional simultaneous
Diophantine approximation,
\emph{Mosc. J. Comb. Number Theory} \textbf{3} (2013),
no.~3--4, 5--23.
\href{https://arxiv.org/abs/1306.1876}{arXiv:1306.1876}.

\bibitem{ArtigianiMarcheseUlcigrai}
M. Artigiani, L. Marchese, and C. Ulcigrai,
The Lagrange spectrum of a Veech surface has a Hall ray,
\emph{Groups Geom. Dyn.} \textbf{10} (2016), no.~4, 1287--1337.
\href{https://doi.org/10.4171/GGD/384}{doi:10.4171/GGD/384}.

\bibitem{PersistentHall}
M. Artigiani, L. Marchese, and C. Ulcigrai,
Persistent Hall rays for Lagrange spectra at cusps of Riemann surfaces,
\emph{Ergodic Theory Dynam. Systems} \textbf{40} (2020),
no.~8, 2017--2072.
\href{https://arxiv.org/abs/1710.02042}{arXiv:1710.02042}.



\bibitem{Cassels}
J. W. S. Cassels,
\emph{An Introduction to the Geometry of Numbers},
Classics in Mathematics,
Springer-Verlag, Berlin, 1997.
\href{https://doi.org/10.1007/978-3-642-62035-5}{doi:10.1007/978-3-642-62035-5}.

\bibitem{CheungChevallier}
Y. Cheung and N. Chevallier,
L\'evy--Khintchin theorem for best simultaneous Diophantine approximations,
\emph{Ann. Sci. \'Ec. Norm. Sup\'er.} (4) \textbf{57} (2024),
no.~1, 185--240.
\href{https://arxiv.org/abs/1906.11173}{arXiv:1906.11173}.

\bibitem{CrispMoranPollington}
D. J. Crisp, W. Moran, and A. D. Pollington,
The inhomogeneous Hall's ray,
preprint (2012),
\href{https://arxiv.org/abs/1203.4295}{arXiv:1203.4295}.

\bibitem{Dani}
S. G. Dani,
Divergent trajectories of flows on homogeneous spaces
and Diophantine approximation,
\emph{J. Reine Angew. Math.} \textbf{359} (1985), 55--89.

\bibitem{ErazoEtAl}
H. Erazo, R. Guti\'errez-Romo, C. G. Moreira, and S. Roma\~na,
Fractal dimensions of the Markov and Lagrange spectra near $3$,
\emph{J. Eur. Math. Soc.} \textbf{28} (2026), no.~9, 3983--4046.
\href{https://doi.org/10.4171/JEMS/1545}{doi:10.4171/JEMS/1545}.

\bibitem{Freiman}
G. A. Freiman,
\emph{Diofantovy priblizheniya i geometriya chisel (zadacha Markova)},
Kalininskii Gosudarstvennyi Universitet, Kalinin, 1975 (in Russian).

\bibitem{Hall}
M. Hall, Jr.,
On the sum and product of continued fractions,
\emph{Ann. of Math.} (2) \textbf{48} (1947), no.~4, 966--993.
\href{https://doi.org/10.2307/1969389}{doi:10.2307/1969389}.

\bibitem{HussainSchleischitzWard}
M. Hussain, J. Schleischitz, and B. Ward,
On the Folklore set and Dirichlet spectrum for matrices,
preprint (2024; revised 2025),
\href{https://arxiv.org/abs/2402.13451v3}{arXiv:2402.13451v3}.

\bibitem{KatokHasselblatt}
A. Katok and B. Hasselblatt,
\emph{Introduction to the Modern Theory of Dynamical Systems},
Encyclopedia of Mathematics and its Applications, vol.~54,
Cambridge University Press, Cambridge, 1995.
\href{https://doi.org/10.1017/CBO9780511809187}{doi:10.1017/CBO9780511809187}.

\bibitem{Kleinbock}
D. Kleinbock,
Density of the multidimensional Lagrange spectrum,
preprint (2026),
\href{https://arxiv.org/abs/2608.30735v2}{arXiv:2608.30735v2}.

\bibitem{KleinbockRao}
D. Kleinbock and A. Rao,
Abundance of Dirichlet-improvable pairs with respect to arbitrary norms,
\emph{Mosc. J. Comb. Number Theory} \textbf{11} (2022),
no.~1, 97--114.

\bibitem{Markov}
A. Markoff,
Sur les formes quadratiques binaires ind\'efinies,
\emph{Math. Ann.} \textbf{15} (1879), 381--406.
\href{https://doi.org/10.1007/BF02086269}{doi:10.1007/BF02086269}.

\bibitem{Moreira}
C. G. Moreira,
Geometric properties of the Markov and Lagrange spectra,
\emph{Ann. of Math.} (2) \textbf{188} (2018), no.~1, 145--170.
\href{https://doi.org/10.4007/annals.2018.188.1.3}{doi:10.4007/annals.2018.188.1.3}.

\bibitem{ParkkonenPaulin}
J. Parkkonen and F. Paulin,
Prescribing the behaviour of geodesics in negative curvature,
\emph{Geom. Topol.} \textbf{14} (2010), no.~1, 277--392.
\href{https://doi.org/10.2140/gt.2010.14.277}{doi:10.2140/gt.2010.14.277}.

\bibitem{Schleischitz}
J. Schleischitz,
Exact uniform approximation and Dirichlet spectrum
in dimension at least two,
\emph{Selecta Math. (N.S.)} \textbf{29} (2023),
no.~5, Paper No.~86.
\href{https://doi.org/10.1007/s00029-023-00889-0}{doi:10.1007/s00029-023-00889-0}.

\bibitem{ShapiraWeiss}
U. Shapira and B. Weiss,
Geometric and arithmetic aspects of approximation vectors,
to appear in \emph{J. Anal. Math.},
\href{https://arxiv.org/abs/2206.05329}{arXiv:2206.05329} (2022).

\bibitem{Ward}
B. Ward,
Distribution of points near the origin in the $d$-dimensional
Lagrange spectrum,
preprint (2026),
\href{https://arxiv.org/abs/2609.05311v2}{arXiv:2609.05311v2}.

\end{thebibliography}
\end{document}